\setlist{nolistsep}
\DeclareMathOperator{\diag}{diag}
\DeclareMathOperator{\im}{im}
\DeclareMathOperator{\Arith}{Arith}
\DeclareMathOperator{\coker}{coker}
\newcommand{\Cballot}{A}
\newcommand{\Nn}{{\mathbb N}}
\newcommand{\Z}{{\mathbb Z}}
\newcommand{\Zz}{{\mathbb Z}}
\newcommand{\C}{{\mathcal C}}
\renewcommand{\P}{{\mathcal P}}
\newcommand{\dd}{\mathbf{d}}
\newcommand{\rr}{\mathbf{r}}
\newcommand{\1}{\mathbf{1}}
\newcommand{\2}{\mathbf{2}}
\newcommand{\caseif}{&\text{ if }}
\newcommand{\multiset}[2]{\ensuremath{\left(\kern-.3em\left(\genfrac{}{}{0pt}{}{#1}{#2}\right)\kern-.3em\right)}} % tex.stackexchange.com/questions/5816
\DeclareMathOperator{\Multiset}{MSet} % because \multiset{[n]}{k} looks dreadful
\newcommand{\st}{\mid} % "such that"
\newtheorem{theorem}{Theorem}
\newtheorem{lemma}[theorem]{Lemma}
\newtheorem{prop}[theorem]{Proposition}
\newtheorem{cor}[theorem]{Corollary}
\newtheorem{Claim}[theorem]{Claim}
\theoremstyle{definition}
\newtheorem{Remark}[theorem]{Remark}
\newtheorem{Example}[theorem]{Example}
\newcommand\commentout[1]{}
\author{Benjamin Braun}
\address[B. Braun]{715 Patterson Office Tower, University of Kentucky, Lexington, KY 40506, USA}
\email{benjamin.braun@uky.edu}
\author{Hugo Corrales}
\address[H. Corrales]{Departamento de Matem\'aticas, Centro de Investigaci\'on y de Estudios Avanzados del IPN, Apartado Postal 14--740, 07000 Ciudad de M\'exico, M\'exico
}
\email{hhcorrales@gmail.com}
\author{Scott Corry}
\address[S. Corry]{Department of Mathematics, Lawrence University, Appleton, WI 54911, USA}
\email{corrys@lawrence.edu}
\author{Luis David Garc\'{\i}a Puente}
\address[L. D. Garc\'{\i}a Puente]{Department of Mathematics and Statistics, Sam Houston State University, Huntsville, TX 77341-2206, USA}
\email{lgarcia@shsu.edu}
\author{Darren Glass}
\address[D. Glass]{Gettysburg College, 300 N. Washington St, Gettysburg, PA 17325, USA}
\email{dglass@gettysburg.edu}
\author{Nathan Kaplan}
\address[N. Kaplan]{Department of Mathematics, University of California, Irvine, CA 92697, USA}
\email{nckaplan@math.uci.edu}
\author{Jeremy L.\ Martin}
\address[J. L. Martin]{Department of Mathematics, University of Kansas, Lawrence, KS 66045-7594, USA}
\email{jlmartin@ku.edu}
\author{Gregg Musiker}
\address[G. Musiker]{School of Mathematics, University of Minnesota, Minneapolis, MN 55455, USA}
\email{musiker@math.umn.edu}
\author{Carlos E. Valencia}
\address[C. E. Valencia]{Departamento de Matem\'aticas, Centro de Investigaci\'on y de Estudios Avanzados del IPN, Apartado Postal 14--740, 07000 Ciudad de M\'exico, M\'exico
}
\email{cvalencia@math.cinvestav.edu.mx}
\title{Counting Arithmetical Structures on Paths and Cycles}
\date{21 June 2018}
\keywords{Arithmetical graph, ballot number, Catalan number, critical group, sandpile group, Laplacian}
\thanks{%
BB was supported by National Security Agency Grant H98230-16-1-0045.
LDGP was supported by Simons Collaboration Grant \#282241.
NK was partially supported by an AMS-Simons Travel Grant and by NSA Young Investigator Grant H98230-16-10305.
JLM was supported by Simons Collaboration Grant \#315347.
GM was supported by NSF Grant \#13692980.
CEV was partially supported by SNI}
\begin{document}
\maketitle

\begin{abstract}
Let $G$ be a finite, connected graph.  An arithmetical structure on $G$ is a pair of positive integer vectors $\dd,\rr$ such that $(\diag(\dd)-A)\rr=0$, where $A$ is the adjacency matrix of $G$.
We investigate the combinatorics of arithmetical structures on path and cycle graphs, as well as the associated critical groups (the torsion part of the cokernels of the matrices $(\diag(\dd)-A)$).
For paths, we prove that arithmetical structures are enumerated by the Catalan numbers, and we obtain refined enumeration results related to ballot sequences.
For cycles, we prove that arithmetical structures are enumerated by the binomial coefficients $\binom{2n-1}{n-1}$, and we obtain refined enumeration results related to multisets.
In addition, we determine the critical groups for all arithmetical structures on paths and cycles.
\end{abstract}

%%%%%%%%%%%%%%%%%%%%%%%%%%%%%%%%%%%%%%%%%%%%%%%%%%%%%%%%%%%%%%%%%%%%%%

\section{Introduction}\label{sec:intro}

This paper is about the combinatorics of arithmetical structures on path and cycle graphs.  We begin by recalling some basic facts about graphs, Laplacians, and critical groups.

Let $G$ be a finite, connected graph with $n\geq 2$ vertices, let $A$ be its adjacency matrix, and let $D$ be the diagonal matrix of vertex degrees.  The \emph{Laplacian matrix} $L=D-A$ has rank $n-1$, with nullspace spanned by the all-ones vector $\1$.  If we regard $L$ as a $\Zz$-linear transformation $\Zz^n\to\Zz^n$, the cokernel $\Zz^n/\im L$ has the form $\Zz\oplus K(G)$; here $K(G)$, the \emph{critical group} is finite abelian, with cardinality equal to the number of spanning trees of $G$, by the Matrix-Tree Theorem.  The critical group is also known as the \emph{sandpile group} or the \emph{Jacobian}. The elements of the critical group represent long-term behaviors of the well-studied \emph{abelian sandpile model} on $G$; see, e.g., \cite{BTW,Dhar,WhatIs}.

More generally, an \emph{arithmetical structure} on $G$ is a pair $(\dd, \rr)$ of positive integer vectors such that $\rr$ is primitive (the gcd of its coefficients is $1$) and
\[
(\diag(\dd)-A)\rr={\bf 0}.
\]
This definition generalizes the Laplacian arithmetical structure just described, where $\dd$ is the vector of vertex degrees and $\rr=\1$.  Note that each of $\dd$ and $\rr$ determines the other uniquely, so we may regard any of $\dd$, $\rr$, or the pair $(\dd,\rr)$ as an arithmetical structure on $G$.  Where appropriate, we will use the terms \emph{arithmetical $d$-structure} and \emph{arithmetical $r$-structure} to avoid ambiguity.
The set of all arithmetical structures on $G$ is denoted $\Arith(G)$, and the data $G,\dd,\rr$ together determine an \emph{arithmetical graph}.  As in the classical case, the matrix $L(G,\dd)=\diag(\dd)-A$ has rank $n-1$ \cite[Proposition~1.1]{Lorenzini89}.  The torsion part of $\coker L$ is the \emph{critical group} of the arithmetical graph.

Arithmetical graphs were introduced by Lorenzini in~\cite{Lorenzini89} to model degenerations of curves.  Specifically, the vertices of $G$ represent components of a degeneration of a given curve, edges represent intersections of components, and the entries of $\dd$ are self-intersection numbers.  The critical group is then the group of components of the N\'eron model of the Jacobian of the generic curve (an observation attributed by Lorenzini to Raynaud).  In this paper, we will not consider the geometric motivation, but instead study arithmetical graphs from a purely combinatorial point of view.

It is known~\cite[Lemma~1.6]{Lorenzini89} that $\Arith(G)$ is finite for all connected graphs~$G$.  The proof of this fact is non-constructive (by reduction to Dickson's lemma), raising the question of enumerating arithmetical structures for a particular graph or family of graphs. We will see that when $G$ is a path or a cycle, the enumeration of arithmetical structures on $G$ is controlled by the combinatorics of Catalan numbers.  In brief, the path $\P_n$ and the cycle $\C_n$ on $n$ vertices satisfy
\[|\Arith(\P_n)|=C_{n-1}=\frac{1}{n}\binom{2n-2}{n-1},\qquad\qquad
|\Arith(\C_n)|=\binom{2n-1}{n-1}=(2n-1) C_{n-1}\]
(Theorems~\ref{path-count} and~\ref{main-cycle-theorem}, respectively).
These results were announced in \cite{arithmetical}.

We will refine these results, and show for example that the number of $\dd$-structures on $\P_n$ with one prescribed $d_i$ entry are given by the \emph{ballot numbers}, a well-known combinatorial refinement of the Catalan numbers first investigated by Carlitz~\cite{ballot}.  For cycles, we get a similar result where the ballot numbers are replaced by binomial coefficients.  The critical group of an arithmetical structure on a path is always trivial, while for a cycle it is always cyclic of order equal to the number of occurrences of 1 in the associated arithmetical $r$-structure. Our approaches for these two families are similar: ballot sequences yield information about arithmetical structures for paths, while multisets produce information in the case of cycles.  Our main results for paths and cycles mirror each other, as do the proof techniques we use.

Two graph operations that play a central role in our work are \emph{subdivision} (or \emph{blowup}) and \emph{smoothing}.  On the level of graphs, subdividing an edge inserts a new degree-2 vertex between its endpoints, while smoothing a vertex of degree~2 removes the vertex and replaces its two incident edges with a single edge between the adjacent vertices. These operations extend to arithmetical structures and preserve the critical group, as shown by Corrales and Valencia \cite[Thms.~5.1, 5.3, 6.5]{arithmetical}, following Lorenzini~\cite[pp.484--485]{Lorenzini89}.  These operations turn out to be key in enumerating arithmetical structures.  Note that paths and cycles are special because they are precisely the connected graphs of maximum degree 2, hence can be obtained from very small graphs by repeated subdivision.

Looking ahead, many open questions remain about arithmetical graphs.  It is natural to ask to what extent the arithmetical critical group $K(G,\dd,\rr)$ behaves like the standard critical group.  The matrix $L(G,\dd)$ is an \emph{M-matrix} in the sense of numerical analysis (see, e.g., Plemmons~\cite{Plemmons}), so it admits a generalized version of chip-firing as described by Guzm\'{a}n and Klivans \cite{GuzmanKlivans}.  One could also look for an analogue of the matrix-tree theorem, asserting that the cardinality of the critical group enumerates some tree-like structures on the corresponding arithmetical graphs, or for a version of Dhar's burning algorithm~\cite{Dhar} that gives a bijection between those structures and objects like parking functions.

Enumerating arithmetical structures for graphs other than paths and cycles appears to be more difficult.  For example, the arithmetical d-structures on the star $K_{n,1}$ can be shown to be the positive integer solutions to the equation
\[d_0 = \sum_{i=1}^n \frac{1}{d_i}.\]
A solution to this Diophantine equation is often called an \emph{Egyptian fraction representation} of $d_0$.  The numbers of solutions for $n\leq 8$ are given by sequence \href{http://oeis.org/A280517}{A280517} in~\cite{OEIS}.  A related problem, with the additional constraints $d_0=1$ and $d_1\leq\cdots\leq d_n$, was studied by S\'andor~\cite{sandor}, who gave upper and lower bounds for the number of solutions; the upper bound was subsequently improved by Browning and Elsholtz \cite{browning_elsholtz}.  The lower and upper bounds are far apart, and it is unclear even what asymptotic growth to expect.

{\bf Acknowledgments:} This project began at the ``Sandpile Groups'' workshop at Casa Matem\'{a}tica Oaxaca (CMO) in November 2015, funded by Mexico's Consejo Nacional de Ciencia y Tecnolog\'{i}a (CONACYT).  The authors thank CMO and CONACYT for their hospitality, as well as Carlos A.\ Alfaro, Lionel Levine, Hiram H.\ Lopez, and Criel Merino for helpful discussions and suggestions. The authors also thank the anonymous referees for their helpful suggestions.

%%%%%%%%%%%%%%%%%%%%%%%%%%%%%%%%%%%%%%%%%%%%%%%%%%%%%%%%%%%%%%%%%%%%%%

\section{Paths}\label{sec:paths}

We have two main goals in this section.
First, we show in Theorem~\ref{main-path-theorem} that using $r$-structures one can partition the arithmetical structures on a fixed path into sets with cardinality given by ballot numbers, generalizing Theorem~\ref{path-count} which states that the total number of arithmetical structures is given by a Catalan number.
Second, we show in Theorem~\ref{ballot-path} that using $d$-structures one can produce additional partitions of the arithmetical structures of a fixed path that again has distribution given by ballot numbers.
We begin with some basic results about arithmetical structures on paths.

%\note{The second part of Lemma~\ref{path-lemma} it follows by~\cite[Theorem 4.2]{arithmetical}.
%Actually~\cite[Theorem 4.2]{arithmetical} is a converse. --CarlosV 08/15/16.\\
%I agree with the first statement -- not sure what you mean by converse.  How about adding a note after the proof?  I think it is OK to leave in the current proof, since it is short and self-contained. -- Jeremy 10/19/16}

\begin{lemma}\label{path-lemma}
If $\rr = (r_1,\ldots, r_n)$ is an arithmetical $r$-structure on the path $\P_n$ with $n \ge 2$ vertices, then
\[
r_1 = r_n = 1.
\]
Moreover, if $r_j = 1$ for some $1 < j < n$, then $(r_1,\ldots, r_j)$ is an arithmetical $r$-structure on $\P_j$ and $(r_j,\ldots, r_n)$ is an arithmetical $r$-structure on $\P_{n-(j-1)}$.
\end{lemma}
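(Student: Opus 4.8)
The plan is to read off the defining equations of an arithmetical structure on $\P_n$ and argue directly. Labeling the vertices of $\P_n$ by $1,\ldots,n$ in path order, the equation $(\diag(\dd)-A)\rr=\mathbf{0}$ is equivalent to the system
\[
d_1 r_1 = r_2, \qquad d_n r_n = r_{n-1}, \qquad d_i r_i = r_{i-1} + r_{i+1} \ \ (1 < i < n).
\]

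To prove $r_1 = 1$, I would show by induction on $i$ that $r_1 \mid r_i$ for every $i$. This is immediate for $i=1$, and it holds for $i=2$ from $d_1 r_1 = r_2$; and if $r_1 \mid r_{i-1}$ and $r_1 \mid r_i$, then $r_{i+1} = d_i r_i - r_{i-1}$ shows $r_1 \mid r_{i+1}$. Hence $r_1$ divides $\gcd(r_1,\ldots,r_n)$, which is $1$ by primitivity of $\rr$, so $r_1 = 1$. Running the same induction from the other end gives $r_n = 1$.

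For the second statement, suppose $r_j = 1$ with $1 < j < n$. I would take $\rr' = (r_1,\ldots,r_j)$ together with $\dd' = (d_1,\ldots,d_{j-1},r_{j-1})$ and check that $(\dd',\rr')$ is an arithmetical structure on $\P_j$. For each vertex $i$ with $1 \le i \le j-1$, the neighbors of $i$ in $\P_j$ are the same as its neighbors in $\P_n$, so the $i$-th equation of $(\diag(\dd')-A_{\P_j})\rr' = \mathbf{0}$ coincides with the $i$-th equation of the original system and therefore holds; the remaining equation, at the new endpoint $j$, reads $d'_j r_j = r_{j-1}$, which holds since $d'_j = r_{j-1}$ and $r_j = 1$. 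All entries of $\dd'$ are positive integers, and $\rr'$ is primitive because $r_1 = 1$, so $\rr'$ is an arithmetical $r$-structure on $\P_j$. The claim that $(r_j,\ldots,r_n)$ is an arithmetical $r$-structure on $\P_{n-(j-1)}$ follows in exactly the same way, now assigning the value $r_{j+1}$ to the new endpoint.

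The argument is essentially bookkeeping; the only point requiring care is that the modified degree vectors remain positive and integral at the endpoints created by cutting the path, which is clear because $r_{j-1}$ and $r_{j+1}$ are positive integers. I do not anticipate any real obstacle.
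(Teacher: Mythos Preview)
Your proof is correct and follows essentially the same approach as the paper's own proof: both read off the defining equations, run the divisibility chain $r_1\mid r_2\mid\cdots\mid r_n$ to force $r_1=r_n=1$ by primitivity, and then handle the splitting by redefining the endpoint entry of $\dd$ to be $r_{j-1}$ (respectively $r_{j+1}$). Your version is slightly more explicit in verifying that the new $\dd'$ is a positive integer vector, but otherwise the arguments coincide.
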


This result follows from~\cite[Theorem 4.2]{arithmetical}, but for the sake of illustrating typical methods, we include a short self-contained proof.

\begin{proof}
First, note that $(\dd,\rr)$ is an arithmetical structure on $\P_n$ if and only if the following equalities hold:
\begin{equation} \label{path-d-r-equalities}
\begin{aligned}
r_1d_1&=r_2;\\
r_id_i&=r_{i-1}+r_{i+1}\quad\text{ for }1<i<n;\quad\text{ and }\\
r_nd_n&=r_{n-1}.
\end{aligned}
\end{equation}
Starting with the first equation and moving down, we obtain the sequence of divisibilities:
\[
r_1|r_2 \implies r_1|r_3 \implies \dotsm \implies r_1|r_n.
\]
Since $\rr$ is a primitive vector, we conclude that $r_1=1$. The same argument starting with the last equation and moving up yields $r_n=1$.

On the other hand, if $r_j=1$ for some $1<j<n$, then the following slight modification of the first $j$ equations defining $(\dd,\rr)$ shows that $(r_1,\dots,r_j)$ is an arithmetical structure on $\P_j$:
\begin{align*}
d_1&=r_2 && (\textrm{since $r_1=1$ by part (1)}),\\
r_id_i&=r_{i-1}+r_{i+1} \quad\text{ for }1<i<j;\quad\text{ and }\\
\tilde{d}_j&:=r_{j-1}.
\end{align*}
A similar argument, using the final $n-(j-1)$ equations defining the pair $(\dd,\rr)$, shows that $(r_j,\dots,r_n)$ is an arithmetical structure on $\P_{n-(j-1)}$.
\end{proof}

\begin{cor} \label{path-characterization}
Let $\rr=(r_1,\dots,r_n)$ be a primitive positive integer vector.  Then
$\rr$ is an arithmetical $r$-structure on $\P_n$ if and only if 
\begin{enumerate}[label=(\alph*)]
\item $r_1=r_n=1$, and \\
\item $r_i|(r_{i-1}+r_{i+1})$ for all $i\in[2,n-1]$.
\end{enumerate}
\end{cor}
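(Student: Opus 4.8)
The plan is to prove Corollary~\ref{path-characterization} by showing each direction is essentially a repackaging of the equations~\eqref{path-d-r-equalities} that appear in the proof of Lemma~\ref{path-lemma}. The key observation is that, given a primitive positive integer vector $\rr$, an arithmetical $d$-structure $\dd$ with $(\diag(\dd)-A)\rr=\mathbf{0}$ exists on $\P_n$ if and only if each of the prescribed ratios $d_i$ is a positive integer; the system~\eqref{path-d-r-equalities} shows that $d_1 = r_2/r_1$, $d_i = (r_{i-1}+r_{i+1})/r_i$ for $1<i<n$, and $d_n = r_{n-1}/r_n$ are forced, so the entire question reduces to when these quotients are simultaneously positive integers.

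For the forward direction, suppose $\rr$ is an arithmetical $r$-structure on $\P_n$. Then $r_1 = r_n = 1$ is exactly the conclusion of Lemma~\ref{path-lemma}, giving~(a). For~(b), the middle equations of~\eqref{path-d-r-equalities} say $r_i d_i = r_{i-1}+r_{i+1}$ with $d_i$ a positive integer, so $r_i \mid (r_{i-1}+r_{i+1})$ for every $i \in [2,n-1]$, which is~(b).

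For the reverse direction, suppose $\rr$ is primitive and satisfies~(a) and~(b). Define $d_1 := r_2$ (using $r_1 = 1$), $d_i := (r_{i-1}+r_{i+1})/r_i$ for $1<i<n$, and $d_n := r_{n-1}$ (using $r_n = 1$). By~(b) each $d_i$ is an integer, and since all $r_i$ are positive, each $d_i$ is a positive integer. By construction the pair $(\dd,\rr)$ satisfies all the equalities in~\eqref{path-d-r-equalities}, hence $(\diag(\dd)-A)\rr = \mathbf{0}$; together with primitivity of $\rr$ this is precisely the statement that $\rr$ is an arithmetical $r$-structure on $\P_n$.

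There is no real obstacle here: the work is entirely in recognizing that~\eqref{path-d-r-equalities} already establishes the equivalence between the matrix equation and the three families of scalar relations, so the corollary is a direct translation. The only point requiring a word of care is confirming that positivity of the $r_i$ forces the candidate $d_i$ to be \emph{positive} integers (not merely nonzero integers), but this is immediate since each $d_i$ is a ratio of a sum of positive integers by a positive integer.
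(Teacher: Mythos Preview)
Your proof is correct and follows essentially the same approach as the paper's own proof, which is a terse three-line argument invoking Lemma~\ref{path-lemma} for condition~(a), the middle equations of~\eqref{path-d-r-equalities} for the necessity of~(b), and then noting that the $d$-structure can be recovered from~\eqref{path-d-r-equalities} for the converse. You have simply written out explicitly what the paper leaves implicit, including the useful remark that positivity of the $r_i$ guarantees positivity of the recovered $d_i$.
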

\begin{proof}
Condition (a) is part of Lemma~\ref{path-lemma}, and the necessity of condition (b) follows from~\eqref{path-d-r-equalities}.  On the other hand, if $\rr$ satisfies these two conditions then the corresponding $d$-structure can be recovered from the equations given in~\eqref{path-d-r-equalities}.
\end{proof}

For an arithmetical $r$-structure $\rr$, let
\[\rr(1) = \#\{i \st r_i=1\}.\]
\begin{theorem} \label{path-count}
The number of arithmetical structures on $\P_n$ is the Catalan number $C_{n-1}=\frac{1}{n}\binom{2n-2}{n-1}$.  Moreover, the number of arithmetical $r$-structures with $\rr(1) = 2$ is the Catalan number $C_{n-2}$.
\end{theorem}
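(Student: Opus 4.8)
By Corollary~\ref{path-characterization}, an arithmetical $r$-structure on $\P_n$ is simply a primitive positive vector with $r_1=r_n=1$ and $r_i\mid r_{i-1}+r_{i+1}$ for $1<i<n$, so everything is a question about such divisibility sequences. Given $\rr\in\Arith(\P_n)$, let $1=j_0<j_1<\dots<j_k=n$ record the positions with $r_i=1$. Iterating Lemma~\ref{path-lemma}, each block $(r_{j_{t-1}},\dots,r_{j_t})$ is an $r$-structure on $\P_{j_t-j_{t-1}+1}$ whose only entries equal to $1$ are its two endpoints; conversely, concatenating such ``atomic'' structures (gluing the trailing $1$ of one block to the leading $1$ of the next) always produces an $r$-structure, since the divisibility condition at a glued vertex reads $1\mid(\text{neighbor sum})$. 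Writing $f(n)=|\Arith(\P_n)|$, letting $a(n)$ be the number of atomic $r$-structures on $\P_n$ (so $a(2)=1$), and setting $f(1):=1$, peeling off the first block gives
\[
f(n)=\sum_{m=2}^{n} a(m)\,f(n-m+1).
\]
Thus it is enough to prove $a(n)=C_{n-2}$: substituting and inducting yields $f(n)=\sum_{j=0}^{n-2}C_j\,f(n-1-j)=\sum_{j=0}^{n-2}C_jC_{n-2-j}=C_{n-1}$, which is exactly both claims of the theorem.

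\textbf{Mediant moves.} Two nearly inverse operations control $a(n)$. First, inserting the value $r_i+r_{i+1}$ between positions $i$ and $i+1$ of an $r$-structure $\rr$ on $\P_m$ again yields an $r$-structure on $\P_{m+1}$: the condition at the new vertex is trivial, and at each old neighbor it follows from $r_i\mid r_{i-1}+r_{i+1}$ (or from $r_1\mid r_2$ at an endpoint); moreover the result is atomic whenever $\rr$ is, the inserted entry being $\ge 2$. Second, summing the equations of \eqref{path-d-r-equalities} gives the identity $\sum_i r_i(2-d_i)=2$; for an atomic structure on $\P_n$ with $n\ge 3$ the two endpoint terms equal $2-r_2\le 0$ and $2-r_{n-1}\le 0$, while every interior term with $d_i\ge 2$ is $\le 0$, so some interior vertex $p$ must satisfy $d_p=1$, i.e.\ $r_p=r_{p-1}+r_{p+1}$. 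Deleting such a $p$ returns an atomic $r$-structure on $\P_{n-1}$ (the conditions at the neighbors of $p$ survive because $r_p\equiv r_{p+1}\pmod{r_{p-1}}$ and $r_p\equiv r_{p-1}\pmod{r_{p+1}}$), and this deletion is the exact inverse of a mediant insertion. Hence every atomic structure on $\P_n$ ($n\ge 3$) arises by a mediant insertion from an atomic structure on $\P_{n-1}$ --- though the insertion producing a given structure need not be unique.

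\textbf{Counting atomic structures, and the obstacle.} To upgrade the previous step to $a(n)=C_{n-2}$ I would record the insertion history as a planar binary tree: the first insertion (the mediant of the two boundary $1$'s) is the root, and each later insertion, which splits some gap, becomes a child of the node owning that gap, yielding a bijection between atomic $r$-structures on $\P_n$ and planar binary trees with $n-2$ internal nodes, a set of size $C_{n-2}$. The step I expect to be the real obstacle is injectivity of the tree-to-structure map: an atomic structure can have several interior vertices with $d_i=1$ (already $(1,3,2,3,1)$ does, de-inserting to both $(1,2,3,1)$ and $(1,3,2,1)$), so one must single out a canonical ``last insertion'' whose deletion rebuilds the tree --- equivalently, prove directly the Catalan-type recursion $a(n)=\sum_{k=2}^{n-1}a(k)\,a(n-k+1)$ by giving a canonical way to cut an atomic structure at a distinguished $d_p=1$ vertex into atomic structures on $\P_k$ and $\P_{n-k+1}$ and verifying that cut is a bijection. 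Everything else is bookkeeping.

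\textbf{An alternate route.} Since the equations of \eqref{path-d-r-equalities} are precisely the recursion $r_0=0$, $r_1=1$, $r_{i+1}=d_ir_i-r_{i-1}$ together with the closing condition $d_nr_n=r_{n-1}$ --- that is, they exhibit $(d_1,\dots,d_n)$ as (the interior of) a Conway--Coxeter quiddity sequence --- one could instead identify $\Arith(\P_n)$ with triangulations of a convex $(n+1)$-gon, of which there are $C_{n-1}$, with $\rr(1)$ tracking the number of ``ears'' and the atomic structures picking out a $C_{n-2}$-subfamily. The obstacle along this route is setting up the frieze/triangulation dictionary and the ear correspondence precisely.
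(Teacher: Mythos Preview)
Your reduction to the atomic case via the block decomposition at the positions where $r_i=1$ is exactly what the paper does, and the recursion $f(n)=\sum_m a(m)f(n-m+1)$ together with the Catalan convolution is precisely the paper's argument for deducing the first assertion from the second.

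Where you diverge is in the treatment of the atomic count $a(n)=C_{n-2}$.  The paper does not prove this at all here: it simply cites it as a known Catalan interpretation (Aigner--Schulze; Stanley, \emph{Catalan Numbers}, Problem~92) and moves on.  Your mediant--insertion framework is in fact the right idea and is essentially what the paper develops \emph{later}, in Propositions~\ref{blowup-path}--\ref{subdivision-prop} and the ballot--sequence bijection of Proposition~\ref{prop_bseq} (which, specialised to $m=2$, gives $a(n)=B(n-2,n-2)=C_{n-2}$).  Your ``alternate route'' via quiddity sequences and triangulations is likewise developed later, in Claim~\ref{triangulation-to-d}; under that dictionary, $\rr(1)=2$ corresponds to $d_0=1$, i.e.\ to triangulations of an $(n+1)$-gon with an ear at vertex~$0$, and removing the ear gives the $C_{n-2}$ triangulations of an $n$-gon directly.

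There is, however, a genuine misstep in your sketch of the obstacle.  You propose to obtain the Catalan recursion $a(n)=\sum_{k}a(k)\,a(n-k+1)$ by ``cutting an atomic structure at a distinguished $d_p=1$ vertex into atomic structures on $\P_k$ and $\P_{n-k+1}$.''  That cannot work as stated: at a vertex with $d_p=1$ one has $r_p=r_{p-1}+r_{p+1}\ge 2$, so the truncations $(r_1,\dots,r_p)$ and $(r_p,\dots,r_n)$ end (resp.\ begin) at a value $>1$ and are not $r$-structures on paths at all, let alone atomic ones.  The binary--tree picture you describe first is correct, but the ``equivalently'' is not: the natural splitting is at the unique root of the insertion tree (the position of the entry $2$), not at a $d_p=1$ vertex, and even then the two halves are not literally atomic $r$-structures but rather insertion histories that must be put in bijection with smaller atomic structures.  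The clean way to resolve the non-uniqueness you flag is not a cut but a canonical form for the insertion history---exactly the monotone ballot sequences of Lemma~\ref{lemma_bseq} and Proposition~\ref{prop_bseq}---or, along the triangulation route, the ear--removal argument above.
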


\begin{proof}
For the second assertion, the description in Corollary~\ref{path-characterization} is a known interpretation of the Catalan numbers; see \cite{AignerSchulze} or \cite[p.~34, Problem~92]{catalan}.  The first assertion then follows from the standard Catalan recurrence $C_{n-1} = \sum_{i=0}^{n-2}C_iC_{n-2-i}$, since by Lemma~\ref{path-lemma} the same recurrence holds for arithmetical structures.
\end{proof}

As a consequence of Theorem~\ref{path-count}, the path $\P_2$ has only one arithmetical structure, namely $(\dd,\rr)=(\1,\1)$, and it is the only path with a unique arithmetical structure.
In the rest of this section, we study a finer enumeration of arithmetical structures on paths in terms of their $\rr$-vectors (Theorem~\ref{main-path-theorem}) and $\dd$-vectors (Theorem~\ref{ballot-path}). The arithmetical structure on $\P_n$ that comes from the Laplacian of $\P_n$ has $\rr = (1,\ldots, 1)$ and $\dd = (1,2,2,\ldots, 2,1)$.  We call this pair $(\dd,\rr)$ the \emph{Laplacian arithmetical structure}.

We recall the following result of Corrales and Valencia.
\begin{prop}[{\cite[Thm.~6.1]{arithmetical}}]\label{only-two-path}
There is exactly one arithmetical structure $(\dd,\rr)$ on $\P_n$ such that $d_i\geq 2$ for all $1<i<n$, namely the Laplacian arithmetical structure.
\end{prop}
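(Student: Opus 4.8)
The plan is to read the hypothesis $d_i\geq 2$ as a discrete convexity condition on the $\rr$-vector, and then combine it with positivity and the boundary constraints from Lemma~\ref{path-lemma} to force $\rr$ to be constant.

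Concretely, suppose $(\dd,\rr)$ is an arithmetical structure on $\P_n$ with $d_i\geq 2$ for all $1<i<n$. By Lemma~\ref{path-lemma} we have $r_1=r_n=1$. Introduce the consecutive differences $\delta_i:=r_{i+1}-r_i$ for $1\leq i\leq n-1$. The interior equations of~\eqref{path-d-r-equalities}, namely $r_id_i=r_{i-1}+r_{i+1}$ for $1<i<n$, together with $d_i\geq 2$ give $r_{i-1}+r_{i+1}\geq 2r_i$, which is exactly $\delta_i\geq\delta_{i-1}$. Hence the differences form a non-decreasing chain $\delta_1\leq\delta_2\leq\cdots\leq\delta_{n-1}$ (an empty or single-term chain when $n\leq 2$, which is fine).

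The key step is to pin down the two ends of this chain. Since $\rr$ is a vector of positive integers with $r_1=r_n=1$, we have $\delta_1=r_2-1\geq 0$ and $\delta_{n-1}=1-r_{n-1}\leq 0$. Feeding these into the monotone chain forces $0\leq\delta_1\leq\delta_{n-1}\leq 0$, so $\delta_1=\delta_{n-1}=0$, and monotonicity then gives $\delta_i=0$ for every $i$. Therefore $\rr=\1$. Substituting $\rr=\1$ back into~\eqref{path-d-r-equalities} yields $d_1=r_2=1$, $d_n=r_{n-1}=1$, and $d_i=r_{i-1}+r_{i+1}=2$ for $1<i<n$, i.e.\ $\dd=(1,2,2,\ldots,2,1)$. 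Thus $(\dd,\rr)$ is the Laplacian arithmetical structure, and conversely that structure plainly satisfies $d_i=2\geq 2$ at every interior vertex, so it is the unique such structure.

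I do not expect a genuine obstacle here; the argument is short and self-contained once one notices that $d_i\geq 2$ says the integer sequence $r_1,\ldots,r_n$ is convex, and that a convex integer sequence which is bounded below by $1$ and equals $1$ at both endpoints must be identically $1$. The only thing to be careful about is the degenerate case $n=2$, where the hypothesis on interior $d_i$ is vacuous; there the equations of~\eqref{path-d-r-equalities} with $r_1=r_2=1$ immediately give $d_1=d_2=1$, so the statement holds trivially. One could phrase the whole proof either through the differences $\delta_i$ as above or directly via the ``convex sequence lies below its chord'' principle; I would use the difference version since it keeps the bookkeeping minimal.
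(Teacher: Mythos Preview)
Your proof is correct and elegant. The convexity argument---observing that $d_i\geq 2$ forces the second differences of $\rr$ to be nonnegative, and that a convex positive-integer sequence equal to $1$ at both endpoints must be constant---is clean and self-contained.

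As for comparison with the paper: there is nothing to compare, because the paper does not prove this proposition at all. It is merely stated as a citation of \cite[Thm.~6.1]{arithmetical} (Corrales and Valencia), with no argument given here. Your proof therefore supplies something the paper omits, and does so more economically than one might expect from the cited source, relying only on Lemma~\ref{path-lemma} and the defining equations~\eqref{path-d-r-equalities}. The handling of the degenerate case $n=2$ is also fine.
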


Given a graph $G$, the \emph{subdivision} of an edge $e = uv$ with endpoints $u$ and $v$ yields a graph containing one new vertex $w$, and with a pair of edges $uw$ and $vw$ replacing $e$.
The reverse operation, known as \emph{smoothing} a 2-valent vertex $w$ incident to edges $e_1 = uw$ and  $e_2 = wv$, removes both edges $e_1, e_2$ and adds a new edge connecting $u$ and $v$.

One of our starting points is the following result relating the arithmetical structures on $\P_n$ with the arithmetical structures on $\P_{n+1}$, regarding the latter graph as an edge subdivision of the former.  The construction is a particular case of the \emph{blow-up} operation described in \cite[pp.~484--485]{Lorenzini89} as well as of the \emph{clique-star transformation}~\cite[Theorem 5.1]{arithmetical}. The recursive structure of the $\rr$-vector was also described in \cite[Lemma~2]{AignerSchulze}.  We include a unified proof of these facts.

\begin{prop}%[{\cite[pp.~484--485]{Lorenzini89}}; {\cite[Lemma~2]{AignerSchulze}}; {\cite[Thm.~5.1]{arithmetical}}]
\label{blowup-path}
(a) Let $n\geq 2$ and let $(\dd',\rr')\in\Arith(\P_n)$. Given $i$ with $2\leq i \leq n$, define integer vectors $\dd$ and $\rr$ of length~$n+1$ as follows:

\begin{equation} \label{path-subdivision}
d_j = \begin{cases}
d'_j \caseif j < i-1,\\
d'_{i-1}+1 \caseif j=i-1,\\
1 \caseif j=i,\\
d'_{i}+1 \caseif j=i+1,\\
d'_{j-1} \caseif j>i+1,
\end{cases}
\qquad\qquad
r_j = \begin{cases}
r'_j \caseif j<i,\\
r'_{i-1}+r'_i \caseif j=i,\\
r'_{j-1} \caseif j>i,
\end{cases}
\end{equation}
for $1\leq j\leq n+1$.  Then $(\dd,\rr)$ is an arithmetical structure on $\P_{n+1}$.
Moreover, the cokernels of $L(\P_n,\dd')$ and $L(\P_{n+1},\dd)$ are isomorphic.

(b)
Let $n\geq 3$ and let $(\dd,\rr)\in\Arith(\P_n)$ such that $d_i=1$ for some $1<i<n$.
For $j\in[n-1]$, define
\begin{equation} \label{path-smoothing}
d'_j = \begin{cases} d_j \caseif j<i-1, \\ d_{i-1}-1 \caseif j=i-1,\\ d_{i+1}-1 \caseif j=i,\\ d_{j+1}\caseif i<j\le n-1,\end{cases}\qquad\qquad
r'_j = \begin{cases} r_j \caseif j<i,\\ r_{j+1}\caseif j\ge i.\end{cases}\qquad\qquad
\end{equation}

Then $(\dd',\rr')$ is an arithmetical structure on $\P_{n-1}$.
Again, the cokernels of $L(\P_n,\dd)$ and $L(\P_{n-1},\dd')$ are isomorphic.
\end{prop}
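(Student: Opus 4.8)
The plan is to verify directly that the defining equations \eqref{path-d-r-equalities} hold for the new vectors $(\dd,\rr)$ in part~(a), and for $(\dd',\rr')$ in part~(b), and then to handle the cokernel claim by exhibiting an explicit change of basis. For part~(a), I would first check the $\rr$-equations. Away from the indices $i-1,i,i+1$ the equations for $(\dd,\rr)$ are literally those for $(\dd',\rr')$ with indices shifted, so nothing needs to be done there. At $j=i$ one must confirm $r_i d_i = r_{i-1}+r_{i+1}$, i.e. $(r'_{i-1}+r'_i)\cdot 1 = r'_{i-1}+r'_i$, which is immediate. At $j=i-1$ one needs $r_{i-1}d_{i-1} = r_{i-2}+r_i$, that is $r'_{i-1}(d'_{i-1}+1) = r'_{i-2} + (r'_{i-1}+r'_i)$; since $r'_{i-1}d'_{i-1} = r'_{i-2}+r'_i$ holds for the original structure (using the convention $r'_0=0$ when $i-1=1$, which is why the hypothesis is $i\geq 2$), this reduces to $r'_{i-1} = r'_{i-1}$. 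The case $j=i+1$ is symmetric, using the equation at index $i$ of the original structure (and the boundary convention $r'_{n+1}=0$ when $i=n$). One also checks that $\rr$ is primitive: it contains the entry $r'_1=1$ by Lemma~\ref{path-lemma}. Part~(b) is the same computation run in reverse; the hypothesis $d_i=1$ is exactly what makes the middle equation $r_i\cdot 1 = r_{i-1}+r_{i+1}$ collapse so that $r_{i-1}$ and $r_{i+1}$ can be merged into a single entry $r'_{i-1}=r_{i-1}$ of the shorter structure, and one verifies the modified $d'$-values $d_{i-1}-1$ and $d_{i+1}-1$ satisfy the path equations on $\P_{n-1}$.

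For the cokernel statement, the cleanest approach is to write down an integer matrix relating $L(\P_{n+1},\dd)$ to $L(\P_n,\dd')$ directly. Let $e_1,\dots,e_{n+1}$ be the standard basis of $\Zz^{n+1}$. The row and column indexed by the new vertex $i$ in $L(\P_{n+1},\dd)$ has a $1$ on the diagonal and $-1$'s in positions $i-1$ and $i+1$; performing the row operation that adds row $i$ to rows $i-1$ and $i+1$, followed by the matching column operations, should clear those off-diagonal entries and leave, after deleting the now-split row and column $i$, exactly the matrix $L(\P_n,\dd')$ (with its $n$ indices identified with $\{1,\dots,i-1,i+1,\dots,n+1\}$ of the big matrix). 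Since these are all unimodular operations over $\Zz$, they induce an isomorphism of cokernels. Part~(b) is the inverse sequence of operations. Alternatively, one can simply cite \cite[Theorem~5.1]{arithmetical} or \cite[pp.~484--485]{Lorenzini89}, since this subdivision is an instance of the clique-star transformation; but giving the explicit row/column reduction keeps the proof self-contained.

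The main obstacle I anticipate is purely bookkeeping rather than conceptual: getting the index shifts and the boundary cases right. Specifically, when $i=2$ the "vertex $i-1=1$" is an endpoint of the path, so the relevant original equation is $r'_1 d'_1 = r'_2$ rather than the generic three-term relation, and one must check the formula $d_{i-1}=d'_{i-1}+1$ still produces a valid endpoint equation $r_1 d_1 = r_2$ for the new path; symmetrically for $i=n$ at the other end. Likewise in part~(b) the cases $i=2$ and $i=n-1$ need the endpoint equations. I would organize the verification by adopting the convention $r_0 = r_{n+1} = 0$ (and similarly for primed vectors), which unifies the endpoint and interior equations into the single relation $r_j d_j = r_{j-1}+r_{j+1}$ and makes all the boundary cases disappear into the general computation. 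With that convention in place, parts~(a) and~(b) each reduce to a handful of one-line identities, and the cokernel claim follows from the explicit unimodular reduction described above.
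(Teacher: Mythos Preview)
Your approach is correct and more elementary than the paper's. For part~(a) the paper does not verify the equations~\eqref{path-d-r-equalities} directly; it observes that edge subdivision is a special case of the clique-star transformation and invokes \cite[Theorem~5.1]{arithmetical}. For part~(b) the paper simply notes that smoothing is the inverse of subdivision, and then isolates one point you have not addressed: well-definedness of $\dd'$. An arithmetical structure requires $\dd'$ to be a \emph{positive} integer vector, so one must check that $d_{i-1}\geq 2$ and $d_{i+1}\geq 2$ whenever $d_i=1$; otherwise $d'_{i-1}=d_{i-1}-1$ or $d'_i=d_{i+1}-1$ could vanish. The paper proves this as a separate lemma (Lemma~\ref{isolated-ones-in-d}), stated for arbitrary graphs. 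Within your framework the patch is immediate --- the equation you verify at position $i-1$ of the smoothed structure reads $r_{i-1}(d_{i-1}-1)=r_{i-2}+r_{i+1}$ (with $r_0=0$ when $i=2$), and since $r_{i+1}\geq 1$ the right side is positive, forcing $d_{i-1}\geq 2$; symmetrically for $d_{i+1}$ --- but this positivity check does need to be stated explicitly. For the cokernel, your row/column reduction is exactly the paper's argument, recorded there as the $\Zz$-equivalence $M\sim\left(\begin{smallmatrix}M'+Q^tQ & -Q^t\\ -Q & 1\end{smallmatrix}\right)\sim\left(\begin{smallmatrix}M'&0\\0&1\end{smallmatrix}\right)$ with $Q$ the indicator row vector of the two neighbors of the new vertex.
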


In case (a), we say that $(\dd,\rr)$ is the \emph{subdivision} of $(\dd',\rr')$ at position~$i$.  In case~(b), we say that $(\dd',\rr')$ is the \emph{smoothing} of $(\dd,\rr)$ at position $i$.

\begin{proof}
Given a graph $G$ and a clique $C$ in $G$, the \emph{clique-star transformation} removes every edge in $C$ and adds a new vertex $w$ together with all the edges between $w$ and every vertex in $C$. Clearly, given an edge $e$ in $\P_n$, the clique-star transformation at $e$ is precisely the subdivision of $e$.
It follows as a special case of \cite[Theorem 5.1]{arithmetical} that equation~\eqref{path-subdivision} gives an arithmetical structure on $\P_{n+1}$.
On the other hand, it is clear that if $(\dd',\rr')$ is the smoothing of $(\dd,\rr)$ at position $i$ then $(\dd,\rr)$ is the subdivision of $(\dd',\rr')$ at position $i$.  Hence equation~\eqref{path-smoothing} gives an arithmetical structure on $\P_{n-1}$.

It remains to show that smoothing is well-defined, that is, if $d_i=1$ for some $1<i<n$, then $d_{i-1} \geq 2$ and $d_{i+1} \geq 2$.
In other words, we need to show that if $(\dd,\rr)$ is an arithmetical structure on $\P_n$ with $n\geq 3$, then there are no consecutive 1's in $\dd$. This follows immediately from Lemma~\ref{isolated-ones-in-d} below, which applies to arithmetical structures on arbitrary graphs.

Finally, the fact that the corresponding cokernels are isomorphic follows directly from \cite[\S 1.8]{Lorenzini89}. Explicitly, let $(\dd,\rr)$ be the \emph{subdivision} of $(\dd',\rr')$ at position~$i$, let $M'=L(\P_{n},\dd')$, and let $M=L(\P_{n+1},\dd)$. Then $M$ is $\Zz$-equivalent to the matrix
\[
M_Q = \begin{pmatrix}M'+Q^tQ & -Q^t\\ -Q & 1\end{pmatrix},
\]
where $Q$ is the vector of length $n$ with 1's in the two positions $i-1$ and $i$, and 0's elsewhere.  (Here ``$\Zz$-equivalent'' means $M=AM_QB$ where $A$ and $B$ are invertible over $\Zz$.)
Moreover, $M_Q$ is $\Zz$-equivalent to the matrix $\begin{pmatrix}
M' & 0 \\ 0 & 1\end{pmatrix}$. Therefore, the cokernel of $M$ is isomorphic to the cokernel of~$M'$.
\end{proof}

\begin{lemma}\label{isolated-ones-in-d}
Let $G$ be a connected graph on the vertex set $\{v_1,v_2,\dots,v_n\}$ with $n\ge 3$ and adjacency matrix $A=(a_{ij})$. In addition, suppose that $v_1$ and $v_2$ are neighbors (that is, $a_{12}>0$). If $\dd$ is an arithmetical $d$-structure on $G$ with $d_1=1$, then $d_2>1$.
\end{lemma}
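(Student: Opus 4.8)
The plan is to argue by contradiction, extracting everything from just the first two coordinates of the defining equation $(\diag(\dd)-A)\rr={\bf 0}$. Write $\rr=(r_1,\dots,r_n)$ with every $r_j\ge 1$; note that positivity of $\rr$ is all we need, and primitivity plays no role. The $k$-th coordinate of the equation reads $d_kr_k=\sum_{j=1}^n a_{kj}r_j$. First I would specialize to $k=1$ and use $d_1=1$ to get $r_1=\sum_j a_{1j}r_j$; since every $a_{1j}\ge 0$, every $r_j\ge 1$, and $a_{12}\ge 1$ by hypothesis, this yields $r_1\ge a_{12}r_2\ge r_2$.

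Next, assuming for contradiction that $d_2=1$ as well, I would run the same computation for $k=2$, using the symmetry $a_{21}=a_{12}$: namely $r_2=\sum_j a_{2j}r_j\ge a_{21}r_1=a_{12}r_1\ge r_1$. Chaining this with the first inequality forces $r_1=r_2$ and, more importantly, forces every intermediate inequality above to be an equality. Reading off the equality conditions in $r_1=\sum_j a_{1j}r_j$ (with $r_1=r_2$) gives $a_{12}=1$ and $a_{1j}=0$ for every $j\ne 2$; symmetrically, $a_{21}=1$ and $a_{2j}=0$ for every $j\ne 1$.

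Finally, these conditions say precisely that $v_1$ and $v_2$ are joined by a single edge and adjacent to no other vertex, so $\{v_1,v_2\}$ is a connected component of $G$. Since $G$ is connected and has $n\ge 3$ vertices, no such proper component can exist, and this contradiction completes the proof. I do not anticipate a genuine obstacle: the entire argument is a short inequality chain, and the only point that requires care is the bookkeeping of the equality cases, which is what pins down the local structure of $G$ at $v_1$ and $v_2$ and produces the contradiction with connectivity.
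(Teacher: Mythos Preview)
Your argument is correct. Both your proof and the paper's reach a contradiction from $d_1=d_2=1$ using only the first two coordinates of $(\diag(\dd)-A)\rr=\mathbf{0}$ together with connectivity and $n\ge 3$, but the mechanics differ. The paper first invokes connectivity to assume (without loss of generality) that $v_2$ has a second neighbor $v_3$ with $a_{23}>0$, then \emph{multiplies} the two equations $r_1=\sum_i a_{1i}r_i$ and $r_2=\sum_j a_{2j}r_j$, expands, and subtracts $r_1r_2$; the cross term $a_{12}a_{23}r_2r_3>0$ survives on the right, an immediate contradiction. You instead run an \emph{inequality chain}: $r_1\ge a_{12}r_2\ge r_2$ and symmetrically $r_2\ge r_1$, then read off the equality conditions to conclude $a_{12}=1$ and $a_{1j}=a_{2j}=0$ for all other $j$, so $\{v_1,v_2\}$ would be a proper component---only then invoking connectivity. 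Your route is arguably cleaner: it avoids the product expansion and the WLOG step, and it makes transparent exactly what structural fact (disconnection) goes wrong. The paper's route is slightly more direct once the extra neighbor is in hand, producing the contradiction in one line without an equality-case analysis.
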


\begin{proof}
Let $\rr=(r_1,r_2,\dots,r_n)$ denote the $r$-structure corresponding to the $d$-structure $\dd$. By the definition of arithmetical structure, we have
\[
d_1r_1=\sum_{i=1}^na_{1i}r_i \qquad \textrm{and} \qquad
d_2r_2=\sum_{j=1}^na_{2j}r_j.
\]
Now suppose, contrary to the claim, that $d_1=d_2=1$. Since $n\ge 3$, at least one of the vertices $v_1,v_2$ has another neighbor; without loss of generality, suppose that $v_2$ and $v_3$ are connected by an edge of $G$, so that $a_{23}>0$. Then
\begin{eqnarray*}
r_1r_2&=&d_1r_1d_2r_2\\
&=&\left(\sum_{i=1}^na_{1i}r_i\right)\left(\sum_{j=1}^na_{2j}r_j\right)\\
&=&a_{12}^2r_1r_2+a_{12}a_{23}r_2r_3+\sum_{\substack{i,j=1 \\ (i,j)\ne (2,1),(2,3)}}^na_{1i}a_{2j}r_ir_j.
\end{eqnarray*}
Subtracting $r_1r_2$ yields the equation
\begin{eqnarray*}
0=(a_{12}^2-1)r_1r_2+a_{12}a_{23}r_2r_3+
\sum_{\substack{i,j=1 \\ (i,j)\ne (2,1), (2,3)}}^na_{1i}a_{2j}r_ir_j.
\end{eqnarray*}
But this is impossible, since for all $k,\ell\in[n]$, the quantities $a_{k\ell}\ge 0, r_k\ge 1$, and $a_{12},a_{23}>0$ by assumption.
\end{proof}

\begin{theorem}\label{critical-group-path}
Let $(\dd,\rr)\in\Arith(\P_n)$ be an arithmetical structure on the path.  Then the associated critical group $K(\P_n,\dd,\rr)$ is trivial.  Moreover,
\begin{equation} \label{r-and-d-path}
\rr(1)=3n-2-\sum_{j=1}^n d_j.
\end{equation}
\end{theorem}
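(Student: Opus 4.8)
The plan is to prove both assertions simultaneously by induction on $n$, using the subdivision/smoothing operation of Proposition~\ref{blowup-path} to reduce to smaller paths. The base case $n=2$ is immediate: the only arithmetical structure is the Laplacian one, with $\dd = (1,1)$, $\rr = (1,1)$, so $K = \coker L(\P_2,\dd)$ is trivial (one checks $L = \begin{pmatrix} 1 & -1 \\ -1 & 1\end{pmatrix}$ has cokernel $\Zz$, whose torsion part is trivial), and $\rr(1) = 2 = 3\cdot 2 - 2 - (1+1)$.

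For the inductive step, take $(\dd,\rr)\in\Arith(\P_n)$ with $n\ge 3$. By Proposition~\ref{only-two-path}, either $(\dd,\rr)$ is the Laplacian arithmetical structure, or $d_i = 1$ for some $1<i<n$. In the Laplacian case, $\dd = (1,2,2,\dots,2,1)$ and $\rr = \1$, so $\rr(1) = n$ and $\sum_j d_j = 2(n-1)$, giving $3n-2 - \sum_j d_j = 3n - 2 - 2n + 2 = n$, as desired; triviality of the critical group follows from the Matrix-Tree Theorem (the path has exactly one spanning tree) or directly. Otherwise, fix such an $i$ and let $(\dd',\rr')\in\Arith(\P_{n-1})$ be the smoothing of $(\dd,\rr)$ at position $i$, which is well-defined by Proposition~\ref{blowup-path}(b). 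By the inductive hypothesis, $K(\P_{n-1},\dd',\rr')$ is trivial and $\rr'(1) = 3(n-1) - 2 - \sum_{j=1}^{n-1} d'_j$. Proposition~\ref{blowup-path}(b) also tells us that $\coker L(\P_n,\dd)\isom \coker L(\P_{n-1},\dd')$, so the critical group $K(\P_n,\dd,\rr)$ is trivial as well.

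It remains to verify the identity~\eqref{r-and-d-path} propagates correctly under smoothing. From the formulas in~\eqref{path-smoothing}, the multiset of $d$-entries changes as follows: the entries $d_j$ for $j<i-1$ are unchanged; $d'_{i-1} = d_{i-1} - 1$; $d'_i = d_{i+1} - 1$; the entry $d_i = 1$ is deleted; and $d'_j = d_{j+1}$ for $i<j\le n-1$ are unchanged. Hence
\[
\sum_{j=1}^{n-1} d'_j = \sum_{j=1}^{n} d_j - d_i - 1 - 1 = \sum_{j=1}^{n} d_j - 3,
\]
since $d_i = 1$. On the $\rr$ side, $\rr'$ is obtained from $\rr$ by deleting the entry $r_i$; thus $\rr'(1) = \rr(1) - 1$ if $r_i = 1$ and $\rr'(1) = \rr(1)$ otherwise. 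The point of potential friction — and the one step I would treat carefully — is checking that the bookkeeping of the $1$'s is consistent, i.e.\ that $\rr(1) = \rr'(1) + 1$ exactly when $d_i = 1$. But when $d_i = 1$, equation~\eqref{path-d-r-equalities} gives $r_i = r_{i-1} + r_{i+1} = r'_{i-1} + r'_i$, which is $1$ precisely when $r_{i-1} = r_{i+1} = \dots$ is impossible for positive integers unless — wait, $r_{i-1} + r_{i+1} \ge 2 > 1$, so in fact $r_i \ge 2$ always when $d_i = 1$. Therefore the deleted entry $r_i$ is never equal to $1$, so $\rr'(1) = \rr(1)$, and combining with the $d$-sum computation:
\[
\rr(1) = \rr'(1) = 3(n-1) - 2 - \sum_{j=1}^{n-1} d'_j = 3n - 5 - \left(\sum_{j=1}^{n} d_j - 3\right) = 3n - 2 - \sum_{j=1}^{n} d_j,
\]
which completes the induction. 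The genuine obstacle is thus minimal — it is just the observation that smoothing occurs at a position where $r_i \ge 2$, so no $1$'s in $\rr$ are ever lost — after which the arithmetic is forced.
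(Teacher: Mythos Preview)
Your proof is correct and follows essentially the same route as the paper's: induction on $n$ via Proposition~\ref{only-two-path} and the smoothing operation of Proposition~\ref{blowup-path}, with the key observation that $d_i=1$ forces $r_i=r_{i-1}+r_{i+1}\ge 2$ so that $\rr(1)=\rr'(1)$. The only difference is cosmetic---the paper computes $\sum d'_j = \sum d_j - (2+d_i)$ whereas you write it as $\sum d_j - 3$, but since $d_i=1$ these are identical.
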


\begin{proof}
We proceed by induction on $n$.  It is easy to check that the theorem holds for the base case $n=2$, where the only arithmetical structure is the Laplacian arithmetical structure $(\dd,\rr)=(\1,\1)$.

For any $n\ge 3$, if $\dd=(1,2,2,\dots,2,1)$ and $\rr=\1$, then $3n-2-\sum_{i=1}^n d_i=n=\rr(1)$.  In this case $L$ is the Laplacian, and $K(\P_n,\dd,\1)$ is the standard critical group of $\P_n$, which is trivial (since $\P_n$ has only one spanning tree, namely itself).

Now suppose that $n\geq3$ and $\dd\neq(1,2,2,\dots,2,1)$.  By Proposition~\ref{only-two-path}, $d_i=1$ for some $1<i<n$. Proposition~\ref{blowup-path} implies that
$(\dd,\rr)$ can be obtained from subdividing some $(\dd',\rr')\in\Arith(\P_{n-1})$ at position~$i$, and $K(\P_n,\dd,\rr)=K(\P_{n-1},\dd',\rr')$.  By induction, these groups are trivial.  Moreover, we note that $d_ir_i=r_{i-1}+r_{i+1} \ge 2$, which implies that $r_i>1$.  We therefore have
\begin{align*}
\rr(1) &= \rr'(1) = 3(n-1) - 2 - \sum_{j=1}^{n-1} d'_j && \text{(by induction)}\\
&= 3(n-1) - 2 - \left(\sum_{j=1}^n d_j - (2+d_i)\right)\\
&= 3n - 2 - \sum_{j=1}^n d_j && \text{(because $d_i=1$).}\qedhere
\end{align*}
\end{proof}

We next consider a finer enumeration of arithmetical $r$-structures on paths, based on the value of~$\rr(1)$.

A \emph{lattice path} is a walk from $(0,0)$ to $(p,q)\in\Nn^2$ consisting of $p$ east steps and $q$ north steps.  It is a standard fact that the number of such paths is $\binom{p+q}{p}$, and that the number of paths from $(0,0)$ to $(k,k)$ that do not cross above the line $y=x$ is the Catalan number $C_k$.  More generally, let $B(k,\ell)$ denote the number of lattice paths from $(0,0)$ to $(k,\ell)$ that do not cross above the line $y=x$.  The numbers $B(k,\ell)$ are a generalization of the Catalan numbers known as \emph{ballot numbers}, sequence \href{https://oeis.org/A009766}{A009766} in the On-Line Encyclopedia of Integer Sequences \cite{OEIS}. Ballot numbers were first observed by Bertrand in 1887~\cite{Bertrand} and studied in detail by Carlitz~\cite{ballot}.  Our $B(k,\ell)$ corresponds to $f(k+1,\ell+1)$ in Carlitz's notation.  A formula is given by $B(k,\ell)=\frac{k-\ell+1}{k+1}\binom{k+\ell}{k}$ \cite[eqn.~2.12]{ballot}.

 \begin{Remark} \label{Ballot-Triangulation}
According to Pak's historical survey in the appendix to \cite{catalan}, the first appearance of the Catalan numbers $C_k$ in the literature was as the number of triangulations of a $(k+2)$-gon, as shown by Segner \cite{Segner} and Euler \cite{Euler}.  In this context, the ballot number $B(k,\ell)$ counts the number of triangulations of a $(k+3)$-gon with a distinguished vertex that has $k-\ell+1$ triangles incident to it.
\end{Remark}

\begin{theorem} \label{main-path-theorem}
Let $n\geq 2$ and $1\leq k\leq n$, and let $\Cballot(n,k)$ be the number of arithmetical structures $(\dd,\rr)$ on $\P_n$ such that $\rr(1)=k$.  Then
\begin{equation} \label{main-path-thm-eqn}
\Cballot(n,k) = B(n-2,n-k) = \frac{k-1}{n-1} \binom{2n-2-k}{n-2}.
\end{equation}
\end{theorem}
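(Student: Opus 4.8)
The plan is to split each $r$-structure at the positions where its entries equal $1$, turning the count into a convolution of Catalan numbers, and then to recognize that convolution as a coefficient of a power of the Catalan generating function.

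\emph{Step 1: decomposition.} Let $\rr\in\Arith(\P_n)$ with $\rr(1)=k$, and let $1=j_1<j_2<\cdots<j_k=n$ be the indices with $r_i=1$ (there are at least two, by Lemma~\ref{path-lemma}). Applying Lemma~\ref{path-lemma} repeatedly, each block $\rr^{(i)}:=(r_{j_i},r_{j_i+1},\dots,r_{j_{i+1}})$, $1\le i\le k-1$, is an arithmetical $r$-structure on $\P_{m_i}$ with $m_i=j_{i+1}-j_i+1\ge 2$ whose only $1$'s are its two endpoints; note $\sum_i(m_i-1)=n-1$. Conversely, given arithmetical $r$-structures $\rr^{(1)},\dots,\rr^{(k-1)}$, each with exactly two $1$'s, on paths $\P_{m_1},\dots,\P_{m_{k-1}}$ with $\sum_i(m_i-1)=n-1$, gluing consecutive blocks by identifying the last vertex of $\rr^{(i)}$ with the first vertex of $\rr^{(i+1)}$ produces a primitive vector of length $n$ satisfying conditions (a) and (b) of Corollary~\ref{path-characterization}: (a) is inherited from the outer blocks, and (b) holds because at each junction the shared entry is $1$ while elsewhere the divisibility comes from the blocks. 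Hence it is an arithmetical $r$-structure on $\P_n$, and it has exactly $k$ ones (the $k-2$ junctions together with the two global endpoints). These two operations are mutually inverse. Since the number of arithmetical $r$-structures on $\P_m$ with exactly two $1$'s is $C_{m-2}$ (the second statement of Theorem~\ref{path-count}), substituting $\ell_i=m_i-2\ge 0$ gives
\[
\Cballot(n,k)=\sum_{\substack{\ell_1,\dots,\ell_{k-1}\ge 0\\ \ell_1+\cdots+\ell_{k-1}=n-k}}\prod_{i=1}^{k-1}C_{\ell_i}.
\]
(When $k=1$ the index set is empty and $\Cballot(n,1)=0$, matching the fact that $\rr(1)\ge 2$.)

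\emph{Step 2: evaluation.} Writing $C(x)=\sum_{\ell\ge 0}C_\ell x^\ell=\tfrac{1-\sqrt{1-4x}}{2x}$ for the Catalan generating function, the sum above is $[x^{n-k}]C(x)^{k-1}$. Since $C(x)=1+xC(x)^2$, Lagrange inversion gives the classical identity $[x^N]C(x)^j=\tfrac{j}{2N+j}\binom{2N+j}{N}$ for $j\ge 1$; taking $j=k-1$, $N=n-k$ and simplifying factorials yields
\[
\Cballot(n,k)=\frac{k-1}{2n-k-1}\binom{2n-k-1}{n-k}=\frac{k-1}{n-1}\binom{2n-2-k}{n-2}.
\]
The last expression is exactly $\frac{p-q+1}{p+1}\binom{p+q}{p}$ with $p=n-2$ and $q=n-k$, i.e.\ the ballot number $B(n-2,n-k)$ in the notation fixed before the theorem, completing the proof.

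The factorial manipulations in Step~2 and the verification that the gluing operation lands in $\Arith(\P_n)$ are routine, and the one external input is the power-of-Catalan identity $[x^N]C(x)^j=\tfrac{j}{2N+j}\binom{2N+j}{N}$, which is standard (see, e.g., \cite{catalan}). The step that needs the most care is the bijection in Step~1, in particular the bookkeeping that a concatenation of $k-1$ prime blocks reassembles to a structure with precisely $k$ ones; this is exactly where the hypothesis $r_{j}=1$ forces the clean splitting behavior of Lemma~\ref{path-lemma}, and I expect it to be the only genuinely delicate point.
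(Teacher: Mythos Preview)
Your proof is correct, and it is a genuine third alternative to the paper's two proofs. The paper's second proof is closest in spirit: it also splits an $r$-structure at a position where $r_i=1$, but it peels off only the \emph{first} block (up to the second occurrence of $1$), obtaining the one-step recurrence
\[
\Cballot(n,k+1)=\sum_{m=2}^{n-k+1}\Cballot(m,2)\,\Cballot(n-m+1,k),
\]
and then invokes a recurrence of Carlitz to identify the answer with $B(n-2,n-k-1)$. You instead split at \emph{all} of the $1$'s simultaneously, obtaining a $(k-1)$-fold Catalan convolution which you evaluate directly via $[x^{n-k}]C(x)^{k-1}$ and the standard power-of-Catalan formula. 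Your version has the advantage of being self-contained (no appeal to Carlitz's identity), and it makes the role of the ``prime'' blocks with $\rr(1)=2$ completely transparent. The paper's first proof is entirely different: it builds every structure by a sequence of subdivisions encoded as a ballot sequence, and then counts those sequences directly, giving a bijection rather than a generating-function identity.
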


Combining Theorems~\ref{critical-group-path} and~\ref{main-path-theorem} immediately yields the following result.
\begin{cor}\label{cor:10}
For $n\geq 2$ and any $k$, we have
\[\#\left\{(\dd,\rr)\in\Arith(\P_n) \st \sum_{j=1}^nd_j=k\right\} = B(n-2,k-2n+2).
\]
In particular, there are no arithmetical structures with $\sum_{j=1}^nd_j=k$ unless $2n-2\le k\le 3n-4$.
\end{cor}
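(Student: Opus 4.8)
The plan is to reduce the statement directly to Theorems~\ref{critical-group-path} and~\ref{main-path-theorem} by a change of statistic, so that no new combinatorial identity is required. The key observation is that equation~\eqref{r-and-d-path} expresses the two relevant statistics as affine functions of one another: for every $(\dd,\rr)\in\Arith(\P_n)$ we have $\rr(1)=3n-2-\sum_{j=1}^n d_j$. Hence the condition $\sum_{j=1}^n d_j=k$ is \emph{equivalent} to the condition $\rr(1)=3n-2-k$, and the two sets
\[
\left\{(\dd,\rr)\in\Arith(\P_n)\st \textstyle\sum_{j=1}^n d_j=k\right\} \quad\text{and}\quad \left\{(\dd,\rr)\in\Arith(\P_n)\st \rr(1)=3n-2-k\right\}
\]
coincide. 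In particular their cardinalities are equal, and the latter is by definition $\Cballot(n,3n-2-k)$ in the notation of Theorem~\ref{main-path-theorem}.

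Next I would invoke Theorem~\ref{main-path-theorem}, which gives $\Cballot(n,m)=B(n-2,n-m)$. Substituting $m=3n-2-k$ and simplifying $n-m=k-2n+2$ yields
\[
\#\left\{(\dd,\rr)\in\Arith(\P_n)\st \textstyle\sum_{j=1}^n d_j=k\right\}=\Cballot(n,3n-2-k)=B(n-2,k-2n+2),
\]
which is the desired formula. This substitution is the entire content of the first assertion, which is why the corollary follows ``immediately.''

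For the ``in particular'' clause I would use the vanishing behavior of the ballot numbers. Since $B(a,b)$ counts lattice paths from $(0,0)$ to $(a,b)$ lying weakly below the line $y=x$, it is nonzero precisely when $0\le b\le a$. Taking $a=n-2$ and $b=k-2n+2$, the count is positive exactly when $0\le k-2n+2\le n-2$, i.e.\ $2n-2\le k\le 3n-4$, and it vanishes otherwise. (Alternatively, one can argue intrinsically: Lemma~\ref{path-lemma} forces $r_1=r_n=1$, so $2\le \rr(1)\le n$, and feeding these bounds through~\eqref{r-and-d-path} gives $2n-2\le\sum_j d_j\le 3n-4$ directly.)

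The only point requiring care is that Theorem~\ref{main-path-theorem} is stated under the hypothesis $1\le k\le n$, so one must confirm that the identity still holds when $m=3n-2-k$ falls outside that range. If $1\le m\le n$ the theorem applies directly, correctly returning $0$ in the boundary case $m=1$ (where $B(n-2,n-1)=0$ and no structure can have a single unit entry). If $m<1$ or $m>n$, then no arithmetical structure has $\rr(1)=m$, since by Lemma~\ref{path-lemma} a path structure has at least two unit entries and trivially at most $n$; in these cases both the cardinality and the ballot number $B(n-2,k-2n+2)$ equal zero. This bookkeeping at the edge of the stated range is the only obstacle, and it is entirely routine.
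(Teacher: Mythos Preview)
Your argument is correct and is exactly the approach the paper takes: the corollary is stated as an immediate consequence of Theorems~\ref{critical-group-path} and~\ref{main-path-theorem}, and you have simply spelled out the change of variable $\rr(1)=3n-2-k$ together with the routine edge-case bookkeeping. There is nothing to add.
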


We give two proofs of Theorem \ref{main-path-theorem}.  The first is bijective and involves identifying lattice paths with \emph{ballot sequences} and interpreting arithmetical structures on $\P_n$ in terms of sequences of edge subdivisions of arithmetical structures on shorter paths. The second proof is phrased in terms of recurrences for certain classes of lattice paths.

To set up the first proof, we describe how to obtain an arithmetical structure on $\P_n$ starting from an arithmetical structure on a path $\P_m$ with $m<n$ and repeatedly subdividing edges in an order specified by an integer sequence $\mathbf{b} = (b_1,\ldots, b_{n-m})$.  We first note the following result, which can be found in \cite[Thm.~6.1]{arithmetical}, but also follows as an immediate consequence of Propositions \ref{only-two-path} and \ref{blowup-path}.

\begin{prop}\label{subdivision-prop}
Any arithmetical structure $(\dd,\rr)$ on $\P_n$ not equal to the Laplacian arithmetical structure can be obtained from an arithmetical structure on $\P_{n-1}$ by subdividing an edge.
\end{prop}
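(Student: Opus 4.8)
The statement to prove is Proposition~\ref{subdivision-prop}: any arithmetical structure on $\P_n$ other than the Laplacian one can be obtained from an arithmetical structure on $\P_{n-1}$ by subdividing an edge.

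My plan is to combine Proposition~\ref{only-two-path} with part~(b) of Proposition~\ref{blowup-path} in a direct, non-inductive argument. Let $(\dd,\rr)\in\Arith(\P_n)$ with $(\dd,\rr)\neq(\1,\1)$ not the Laplacian structure. First I would invoke Proposition~\ref{only-two-path}, which says the Laplacian structure is the \emph{only} arithmetical structure on $\P_n$ with $d_i\ge 2$ for all interior $i$; since our $(\dd,\rr)$ is not the Laplacian structure, there must exist some index $i$ with $1<i<n$ and $d_i=1$. (Note $n\ge 3$ here, since $\P_2$ has only the Laplacian structure by Theorem~\ref{path-count}, so there is at least one interior vertex.)

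Next I would apply Proposition~\ref{blowup-path}(b) at this position $i$: the smoothing operation produces vectors $(\dd',\rr')$ of length $n-1$, and that proposition guarantees $(\dd',\rr')\in\Arith(\P_{n-1})$ — in particular the well-definedness clause of Proposition~\ref{blowup-path}(b), which rests on Lemma~\ref{isolated-ones-in-d}, ensures $d_{i-1}\ge 2$ and $d_{i+1}\ge 2$ so the formulas in~\eqref{path-smoothing} yield positive integers. Finally, the last sentence of the proof of Proposition~\ref{blowup-path} records that if $(\dd',\rr')$ is the smoothing of $(\dd,\rr)$ at position $i$, then conversely $(\dd,\rr)$ is the subdivision of $(\dd',\rr')$ at position $i$. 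That is exactly the desired conclusion.

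There is essentially no obstacle here: the proposition is a formal consequence of results already in hand, and the only thing to be careful about is the bookkeeping — confirming that the hypotheses of Proposition~\ref{blowup-path}(b) (namely $n\ge 3$ and the existence of an interior $d_i=1$) are met, which is precisely what Proposition~\ref{only-two-path} supplies. I would write this as a three-sentence proof: locate an interior $1$ in $\dd$ via Proposition~\ref{only-two-path}, smooth there via Proposition~\ref{blowup-path}(b), and note subdivision is the inverse of smoothing. If one wanted a proof independent of the smoothing machinery, an alternative would be to argue directly that the vectors defined by~\eqref{path-smoothing} satisfy the characterization in Corollary~\ref{path-characterization} and that re-subdividing recovers $(\dd,\rr)$, but routing through Proposition~\ref{blowup-path} is cleaner and avoids redoing that verification.
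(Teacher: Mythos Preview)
Your proposal is correct and is exactly the argument the paper has in mind: the paper does not give a standalone proof of Proposition~\ref{subdivision-prop} but simply states that it ``follows as an immediate consequence of Propositions~\ref{only-two-path} and~\ref{blowup-path},'' and your writeup is precisely the unpacking of that consequence. One cosmetic slip: you write $(\dd,\rr)\neq(\1,\1)$, but the Laplacian structure on $\P_n$ is $\dd=(1,2,\dots,2,1)$, $\rr=\1$; otherwise the logic is clean.
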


This result implies that every arithmetical structure on $\P_n$ comes from taking the Laplacian arithmetical structure on $\P_m$ for some $m$ satisfying $2\le m\le n$ and subdividing $n-m$ edges.   Consider the standard ordering on the $m-1$ edges of $\P_m$. Let $\mathbf{b} = (b_1,\ldots, b_{n-m})$ be a sequence of $n-m$ positive integers satisfying $1 \le b_i \le (m-1) + (i-1)=m+i-2$.  We inductively define an arithmetical structure $\mathbf{A}_n(\mathbf{b})$ on $\P_n$ from this sequence $\mathbf{b}$.  Let $(\dd_0, \rr_0)$ be the Laplacian arithmetical structure on $\P_m$. Let $(\dd_i, \rr_i)$ be the arithmetical structure on $\P_{m+i}$ obtained from the arithmetical structure $(\dd_{i-1}, \rr_{i-1})$ by subdividing the edge $b_i$ of the path $\P_{m+i-1}$; equivalently, the position of the vector specified by the index $i$ in part (a) of Proposition~\ref{blowup-path} is given by $b_i+1$. Proposition~\ref{blowup-path} gives explicit formulas for the entries of $(\dd_i, \rr_i)$.  Note that if $n=m$, then $\mathbf{b}$ is the empty sequence and $\mathbf{A}_n(\mathbf{b})$ is the Laplacian arithmetical structure on $\P_n$.  Proposition \ref{blowup-path} implies that the value of $\rr(1)$ is preserved under edge subdivision, so if $\mathbf{b} = (b_1,\ldots, b_{n-m})$ then $\mathbf{A}_n(\mathbf{b})$ is an arithmetical structure on $\P_n$ with $\rr(1) = m$.

\begin{Example}
Let $n=5$ and $m=2$, with $\mathbf{b}=(1,2,2)$.
Then our subdivision process on arithmetical $d$-structures yields
\[
(1,1)\mapsto (2,1,2)\mapsto (2,2,1,3) \mapsto (2,3,1,2,3) \, ,
\]
and thus $\mathbf{A}_5(1,2,2)=(2,3,1,2,3)$.
The corresponding arithmetical $r$-structure transformation is
\[
(1,1)\mapsto (1,2,1)\mapsto (1,2,3,1)\mapsto (1,2,5,3,1)\, .
\]
\end{Example}

Two different sequences can produce the same arithmetical structure on $\P_n$, but it is straightforward to characterize when this occurs.
\begin{lemma}\label{lemma_bseq}
Let $n \ge m \ge 2$ and $\mathbf{b} = (b_1,b_2,\ldots, b_{n-m})$ be a sequence of positive integers satisfying $1 \le b_i \le i+m-2$.  Suppose $i$ is a positive integer satisfying $1 \le i < n-m$ with $b_i > b_{i+1}$.  Then, $\mathbf{A}_n(\mathbf{b}) = \mathbf{A}_n(\mathbf{b'})$, where $\mathbf{b}' = (b_1',\ldots, b_{n-m}')$ is defined by
\[
b_j' = \begin{cases}
b_{i+1} & \text{if } j=i,\\
b_i + 1 & \text{if } j = i+1,\\
b_j & \text{otherwise}.
\end{cases}
\]
\end{lemma}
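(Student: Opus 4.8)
The plan is to show that swapping $b_i$ and $b_{i+1}$ (with the $+1$ correction) corresponds to performing two consecutive subdivisions in the opposite order, and that subdivisions at positions that are ``far apart'' genuinely commute while subdivisions at adjacent positions satisfy exactly the claimed twisted commutation. Concretely, I would focus on the single step where the two subdivisions happen: by induction it suffices to treat the case $n-m=2$, $\mathbf{b}=(b_1,b_2)$ with $b_1>b_2$, since the subdivisions indexed $1,\dots,i-1$ build a common intermediate arithmetical structure $(\dd_{i-1},\rr_{i-1})$ on $\P_{m+i-1}$ from which both $\mathbf{b}$ and $\mathbf{b}'$ proceed, and the subdivisions indexed $i+2,\dots,n-m$ are applied identically afterward (one must check their indices are unaffected, which follows because both orders produce the same structure on $\P_{m+i+1}$ and the later $b_j$ are literally unchanged).

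So reduce to: starting from $(\dd',\rr')\in\Arith(\P_N)$ where $N=m+i-1$, first subdivide at position $b_1+1$ to get a structure on $\P_{N+1}$, then subdivide that at position $b_2+1$ to land in $\Arith(\P_{N+2})$; I claim this equals the result of first subdividing $(\dd',\rr')$ at position $b_2+1$ and then subdividing at position $(b_1+1)+1=b_1+2$. The key point is bookkeeping of indices: inserting a new vertex at position $b_1+1$ shifts everything at position $\ge b_1+1$ up by one, so a subdivision that was ``at position $b_2+1$ in $\P_N$'' with $b_2+1\le b_1+1$ is unaffected, while the later subdivision at $b_1+1$ must be relabeled $b_1+2$ because the vertex originally at position $b_1+1$ has moved. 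First I would write out both composite maps explicitly using the piecewise formulas in \eqref{path-subdivision} for $\dd$ and $\rr$ separately, and verify that the resulting length-$(N+2)$ vectors agree entry by entry. Since $b_1>b_2$ means $b_1+1>b_2+1$, i.e.\ the two insertion sites are separated by at least one original vertex, the two new degree-1 entries land in distinct positions and the ``$+1$'' degree corrections at the four affected old positions do not collide; the verification is a routine case split on where $j$ lies relative to $b_2+1, b_2+2, b_1+2, b_1+3$.

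The main obstacle is purely the index arithmetic: making sure the off-by-one shifts are tracked correctly through two subdivisions in each order, and confirming that the definition of $\mathbf{b}'$ in the statement — $b_i'=b_{i+1}$, $b_{i+1}'=b_i+1$, others fixed — is exactly the relabeling forced by this analysis (in particular that the hypothesis $b_i>b_{i+1}$, equivalently $b_i\ge b_{i+1}$ with the strict inequality needed so that after the shift $b_{i+1}\le (i)+m-2$ and $b_i+1\le (i+1)+m-2$ remain valid sequence entries, so $\mathbf{b}'$ is again an admissible sequence). No new facts about arithmetical structures are needed beyond Proposition~\ref{blowup-path}; the content is entirely combinatorial commutation of blow-ups, and once the $n-m=2$ case is checked the general statement follows by applying it inside the longer sequence.
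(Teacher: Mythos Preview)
Your proposal is correct and follows essentially the same approach as the paper: the paper's proof is a single sentence observing, via Proposition~\ref{blowup-path}, that subdividing $\P_{m+i-1}$ at edge $b_i$ and then $\P_{m+i}$ at edge $b_{i+1}$ (with $b_i>b_{i+1}$) yields the same structure as subdividing first at edge $b_{i+1}$ and then at edge $b_i+1$. Your write-up simply unpacks this commutation of insertions with explicit index bookkeeping, which is exactly what the paper leaves implicit.
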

\begin{proof}
By the definition of $(\dd_i, \rr_i)$ and Proposition \ref{blowup-path} we see that if $b_i > b_{i+1}$, then starting with the arithmetical structure $(\dd_{i-1}, \rr_{i-1})$, subdividing $\P_{m+i-1}$ at edge $b_i$ and then subdividing $\P_{m+i}$ at edge $b_{i+1}$, gives the same arithmetical structure on $\P_{m+i+1}$ as subdividing $\P_{m+i-1}$ at edge $b_{i+1}$ and then subdividing $\P_{m+i}$ at edge $b_i + 1$.
\end{proof}

Repeatedly applying this lemma gives a bijection between arithmetical structures on $\P_n$ and sequences of a certain type.

\begin{prop}\label{prop_bseq}
Every arithmetical structure on $\P_n$ is equal to $\mathbf{A}_n(\mathbf{b})$ for a unique sequence $\mathbf{b} = (b_1,\ldots, b_{n-m})$ satisfying $1\le b_i \le i+m - 2$ and $b_i \le b_{i+1}$ for all $i$.
\end{prop}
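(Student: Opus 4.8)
The plan is to establish a bijection between arithmetical structures on $\P_n$ and the set $\mathcal{S}_n$ of sequences $\mathbf{b} = (b_1,\ldots,b_{n-m})$ (ranging over all valid $m$ with $2 \le m \le n$) satisfying $1 \le b_i \le i+m-2$ and the weak monotonicity condition $b_i \le b_{i+1}$ for all $i$. By the discussion preceding Proposition~\ref{subdivision-prop}, together with Proposition~\ref{subdivision-prop} itself, the map $\mathbf{b} \mapsto \mathbf{A}_n(\mathbf{b})$ is \emph{surjective} onto $\Arith(\P_n)$ even without the monotonicity hypothesis: iterating Proposition~\ref{subdivision-prop} shows every arithmetical structure on $\P_n$ arises by repeatedly subdividing edges starting from the Laplacian structure on some $\P_m$, and the number $m = \rr(1)$ is determined by the structure (since $\rr(1)$ is preserved under subdivision and equals $n$ for the Laplacian structure on $\P_n$). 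So it suffices to show that (i) restricting to monotone sequences still gives a surjection, and (ii) the restricted map is injective.

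For (i), I would argue that any valid sequence $\mathbf{b}$ can be transformed into a monotone one producing the same arithmetical structure by repeatedly applying Lemma~\ref{lemma_bseq}: whenever there is a descent $b_i > b_{i+1}$, replace the pair $(b_i, b_{i+1})$ by $(b_{i+1}, b_i+1)$. One must check this process terminates. The natural approach is to exhibit a monovariant: for instance, the number of inversions, or the sum $\sum_i b_i$, or the reverse-lexicographic order on $\mathbf{b}$. A convenient choice is to observe that each application strictly \emph{decreases} the quantity $\sum_{i} (n-m-i+1)\, b_i$ or, more simply, strictly decreases $\mathbf{b}$ in lexicographic order when read from the left at the position $i$ of the swap (the entry $b_i$ strictly drops while earlier entries are unchanged), so after finitely many steps no descent remains. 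One should also verify that the swapped sequence $\mathbf{b}'$ still satisfies the range constraints $1 \le b_j' \le j+m-2$: this is where $b_i \le i+m-2$ forces $b_i + 1 \le i+m-1 = (i+1)+m-2$, so $b'_{i+1}$ is in range, and $b'_i = b_{i+1} \le b_i \le i+m-2$ is automatically in range. Thus the restricted map is still surjective.

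For (ii), injectivity, I would count. By Theorem~\ref{main-path-theorem} (or by Theorem~\ref{path-count} summed over $k$), $|\Arith(\P_n)| = C_{n-1}$. On the combinatorial side, a monotone sequence $\mathbf{b}$ with $1 \le b_i \le i+m-2$ and $b_1 \le \cdots \le b_{n-m}$, together with the choice of $m$, is precisely a lattice-path / ballot-type object: for fixed $m$, such sequences of length $n-m$ are counted by a ballot number, and summing over $m$ recovers $C_{n-1}$ by the standard Catalan identities already invoked in the proof of Theorem~\ref{path-count}. Since the domain $\mathcal{S}_n$ and the codomain $\Arith(\P_n)$ are finite sets of the same cardinality and the map is surjective, it is a bijection; in particular each arithmetical structure corresponds to a \emph{unique} monotone $\mathbf{b}$. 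Alternatively — and this is cleaner if one wants to avoid invoking the enumeration — injectivity can be proven directly: given $\mathbf{A}_n(\mathbf{b})$ with $\mathbf{b}$ monotone, reconstruct $\mathbf{b}$ by reverse induction. The structure has some $d_i = 1$ (if it is not the Laplacian structure on $\P_n$), and the monotonicity condition pins down which smoothing (equivalently, which last subdivision) to undo — one shows the last subdivision must have occurred at the \emph{largest} available edge position consistent with producing a degree-$1$ vertex, which is exactly what $b_{n-m}$ being the maximal entry encodes. Peeling off subdivisions one at a time recovers $\mathbf{b}$ uniquely.

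The main obstacle is the termination argument in step (i) and, intertwined with it, making sure the "straightening" via Lemma~\ref{lemma_bseq} really reaches \emph{every} monotone representative rather than getting stuck — i.e., that the descent-removal rewriting is confluent enough that the unique monotone form in each fiber is actually hit. The counting argument in (ii) sidesteps the need for confluence (surjectivity plus equicardinality gives bijectivity for free), so I would lean on that; the cleanest writeup is: surjectivity of the unrestricted map (Prop.~\ref{subdivision-prop}), reduction to monotone sequences via finitely many Lemma~\ref{lemma_bseq} moves with an explicit monovariant, and then a cardinality count to upgrade surjectivity to bijectivity.
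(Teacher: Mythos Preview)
Your surjectivity argument is essentially the paper's: the paper also says that repeatedly applying Lemma~\ref{lemma_bseq} brings any valid sequence to a monotone one, though it does not spell out a termination monovariant as you do.

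For injectivity, the paper takes your option~(b), not the counting route: it simply says that Proposition~\ref{blowup-path} implies the structures arising from distinct monotone sequences are distinct, i.e., smoothing recovers the last subdivision and hence $\mathbf{b}$ recursively. Your counting argument~(a) is a genuine alternative, but be careful about circularity: in the paper's logical order, Proposition~\ref{prop_bseq} is an ingredient in the first proof of Theorem~\ref{main-path-theorem}, so you cannot invoke Theorem~\ref{main-path-theorem} here. Invoking Theorem~\ref{path-count} for $|\Arith(\P_n)|=C_{n-1}$ is fine (it is proved independently), but then you also need $\sum_{m=2}^{n} B(n-2,n-m)=C_{n-1}$; this is a standard ballot identity, but it is not the Catalan recurrence used in the proof of Theorem~\ref{path-count}, so your pointer to that proof does not quite land. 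The counting route thus works once you supply that identity (or Lemma~\ref{bseq_count}) independently, and it buys you the bijection without analyzing which $d_i=1$ to smooth; the paper's direct route is shorter and keeps the enumeration results downstream.
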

\begin{proof}
Lemma \ref{lemma_bseq} shows that every arithmetical structure on $\P_n$ is equal to $\mathbf{A}_n(\mathbf{b})$ for some sequence $\mathbf{b}$ of this type.  Proposition \ref{blowup-path} implies that the arithmetical structures arising from such sequences are distinct.
\end{proof}

In order to complete our first proof of Theorem \ref{main-path-theorem} we need only count the number of sequences described in the statement of Proposition \ref{prop_bseq}.
\begin{lemma}\label{bseq_count}
Let $n$ and $m$ be integers satisfying $2 \le m \le n$. The number of sequences $\mathbf{b} = (b_1,\ldots, b_{n-m})$ satisfying $1\le b_i \le i+m - 2$ and $b_i \le b_{i+1}$ for all $i$ is equal to $B(n-2, n-m)$.
\end{lemma}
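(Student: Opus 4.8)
The plan is to set $\ell := n-m$ and $k := n-2$, so that $0 \le \ell \le k$, and to construct an explicit bijection between the sequences $\mathbf{b}$ in the statement and the lattice paths counted by $B(k,\ell)$ — that is, the lattice paths from $(0,0)$ to $(k,\ell)$ that stay weakly below the line $y=x$. (When $m=n$ this is the vacuous identity $B(n-2,0)=1$, so assume $\ell\ge 1$.)

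The first step is to record the standard encoding of those lattice paths by weakly increasing sequences. Reading a path from $(0,0)$ to $(k,\ell)$ step by step, let $a_j$ be the $x$-coordinate at which the $j$-th north step is taken, for $1\le j\le\ell$; this is a bijection between such paths and integer sequences $0\le a_1\le a_2\le\cdots\le a_\ell\le k$. Checking the constraint $y\le x$ along each horizontal segment of the path shows that the path never rises above $y=x$ if and only if $a_j\ge j$ for all $j$. Hence $B(k,\ell)$ equals the number of weakly increasing sequences $(a_1,\ldots,a_\ell)$ with $j\le a_j\le k$ for every $j$.

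The second step is the bijection itself: send $\mathbf{b}=(b_1,\ldots,b_\ell)$ to $\mathbf{a}=(a_1,\ldots,a_\ell)$ defined by $a_j:=(n-1)-b_{\ell+1-j}$, with inverse $b_i=(n-1)-a_{\ell+1-i}$. Reversing indices turns a weakly increasing sequence into a weakly decreasing one, and subtracting from the constant $n-1$ flips monotonicity back, so $\mathbf{b}$ is weakly increasing if and only if $\mathbf{a}$ is. The inequality $b_i\ge 1$ is equivalent to $a_{\ell+1-i}\le n-2=k$, and the inequality $b_i\le i+m-2$ is equivalent to $a_{\ell+1-i}\ge (n-1)-(i+m-2)=(n-m+1)-i=\ell+1-i$, i.e.\ to $a_j\ge j$ with $j=\ell+1-i$. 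Since $i\mapsto\ell+1-i$ permutes $\{1,\ldots,\ell\}$, the two constraint systems match, and therefore the number of admissible sequences $\mathbf{b}$ is $B(n-2,n-m)$.

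I expect the only delicate point to be the bookkeeping in the first step — confirming that ``stays weakly below $y=x$'' corresponds exactly to $a_j\ge j$ for all $j$, rather than to some shifted condition — and, relatedly, keeping the substitution $m=n-\ell$ straight in the second step so that the constants $n-1$ and $\ell+1-i$ emerge precisely. As an alternative to the bijection, one could instead verify that the quantity in the lemma satisfies the ballot recurrence $B(k,\ell)=B(k-1,\ell)+B(k,\ell-1)$ for $\ell<k$, together with the boundary values, by conditioning on whether $b_\ell$ attains its maximal allowed value $\ell+m-2$; but the reversal bijection is cleaner and makes transparent why ballot numbers, rather than arbitrary binomial coefficients, appear.
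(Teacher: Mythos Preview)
Your proof is correct. Both approaches ultimately rest on a reversal bijection, but the packaging differs. The paper first prepends $m-2$ ones to embed the sequences of the lemma into the full set of ballot sequences of length $n-2$, interprets these as lattice paths from $(1,1)$ to $(n-1,n-1)$ not crossing $y=x$, and then reverses the path while swapping north/east steps to convert ``begins with $\ge m-2$ east steps'' into ``ends with $\ge m-2$ north steps,'' which truncates to a path counted by $B(n-2,n-m)$. You instead encode the target paths $B(k,\ell)$ directly by the $x$-coordinates $(a_1,\dots,a_\ell)$ of their north steps and build a single closed-form bijection $a_j=(n-1)-b_{\ell+1-j}$. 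Your route is shorter and avoids the intermediate passage through length-$(n-2)$ ballot sequences; the paper's route has the advantage of making explicit the connection to the standard ballot-sequence framework used elsewhere in the section. The bookkeeping you flagged (that ``weakly below $y=x$'' corresponds exactly to $a_j\ge j$) is handled correctly.
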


Appending an initial string of $m-2$ entries equal to $1$ we see that the sequences of Lemma \ref{bseq_count} are in bijection with sequences $(b_1,\ldots, b_{n-2}) = (1,\ldots, 1, b_{m-1}, \ldots, b_{n-2})$ satisfying $b_i \le i$ and $b_i \le b_{i+1}$. A sequence $(b_1,\ldots, b_k)$ of positive integers satisfying $b_i \le i$ and $b_i \le b_{i+1}$ is called a \emph{ballot sequence of length $k$}.  We need only count the number of ballot sequences of length $n-2$ that begin with at least $m-2$ entries equal to $1$.  The following lemma is equivalent to Lemma \ref{bseq_count}.
\begin{lemma}\label{lemma-initial_string}
The number of sequences $(b_1,\ldots, b_{n-2})$ of positive integers satisfying $b_i \le i$ and $b_i \le b_{i+1}$ that begin with an initial string of at least $m-2$ entries equal to $1$ is given by $B(n-2, n-m)$.
\end{lemma}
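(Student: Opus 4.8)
The plan is to pass from ballot sequences to lattice paths, where the ``initial string of $1$'s'' condition becomes a condition on the beginning of the path, and then a single reflection turns the count into a ballot number read directly off its definition. First I would recall the standard bijection between ballot sequences of length $k$ and lattice paths from $(0,0)$ to $(k,k)$ that stay weakly below the line $y=x$: to such a path, associate the sequence $(b_1,\dots,b_k)$ where $b_i-1$ is the height at which the $i$-th east step of the path (read from left to right) occurs. The east-step heights of a weakly-below-diagonal path form a weakly increasing sequence $0\le a_1\le\cdots\le a_k$ with $a_i\le i-1$ (the inequality $a_i\le i-1$ being exactly the requirement that the path not cross above $y=x$ at the lattice point immediately preceding its $i$-th east step), so $b_i:=a_i+1$ ranges precisely over ballot sequences of length $k$, and the correspondence is a bijection. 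Under it, the condition that $(b_1,\dots,b_k)$ begin with at least $m-2$ entries equal to $1$ is the condition $a_1=\cdots=a_{m-2}=0$, i.e.\ that no north step occurs before the $(m-2)$-th east step; equivalently, that the path begins with at least $m-2$ east steps.

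Taking $k=n-2$, it remains to count lattice paths $D$ from $(0,0)$ to $(n-2,n-2)$ that stay weakly below $y=x$ and begin with at least $m-2$ east steps. Deleting those first $m-2$ east steps sets up a bijection between such $D$ and lattice paths from $(m-2,0)$ to $(n-2,n-2)$ that stay weakly below $y=x$ (the inverse prepends $m-2$ east steps, which only traverses lattice points with $y=0\le x$, so staying weakly below $y=x$ is preserved). Now reflect each such path across the anti-diagonal $x+y=n-2$, i.e.\ apply the map $(x,y)\mapsto(n-2-y,\,n-2-x)$ to every lattice point of the path and traverse the result in reverse order: this interchanges east and north steps, fixes the line $y=x$, and sends the region $y\le x$ to itself, so it carries a path from $(m-2,0)$ to $(n-2,n-2)$ weakly below $y=x$ to a path from $(0,0)$ to $(n-2,\,n-m)$ weakly below $y=x$, and this too is a bijection. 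By the definition of the ballot numbers, there are exactly $B(n-2,n-m)$ of these, which is the asserted count. Since Lemma~\ref{lemma-initial_string} is equivalent to Lemma~\ref{bseq_count} (by prepending $m-2$ ones, as already noted), this finishes the enumeration feeding into Theorem~\ref{main-path-theorem}.

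I do not expect any genuine obstacle here; the points that require care are purely bookkeeping. One must make sure the ``initial string'' translates correctly (using that a ballot sequence is weakly increasing, so $b_1=\cdots=b_{m-2}=1$ is exactly ``begins with at least $m-2$ ones''), and check the two boundary cases: when $m=2$ the condition is vacuous, the deletion step is trivial, and one recovers $B(n-2,n-2)=C_{n-2}$, the total number of ballot sequences of length $n-2$; when $m=n$ the only such sequence is $(1,\dots,1)$, corresponding to the single path consisting of $n-2$ east steps from $(0,0)$ to $(n-2,0)$, consistent with $B(n-2,0)=1$. (For the application in Theorem~\ref{main-path-theorem} one also notes that $B(n-2,n-m)=\frac{m-1}{n-1}\binom{2n-2-m}{n-2}$ via $B(k,\ell)=\frac{k-\ell+1}{k+1}\binom{k+\ell}{k}$, which is a routine simplification.)
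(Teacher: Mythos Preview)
Your proof is correct and follows essentially the same route as the paper's: both encode ballot sequences as sub-diagonal lattice paths, translate the ``initial $m-2$ ones'' condition into ``begins with $m-2$ east steps,'' and then use a reverse-and-transpose (equivalently, your anti-diagonal reflection) together with a truncation to read off $B(n-2,n-m)$. The only differences are cosmetic---you work with paths from $(0,0)$ to $(n-2,n-2)$ and truncate before reflecting, whereas the paper shifts to $(1,1)\to(n-1,n-1)$ and reflects before truncating---and your added boundary-case checks ($m=2$, $m=n$) are a nice touch.
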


\begin{proof}
Recall that $B(n-2, n-2)$ counts the number of lattice paths from $(0,0)$ to $(n-2, n-2)$ that do not cross above the line $y=x$.  It is trivial to note that this also counts the number of lattice paths from $(1,1)$ to $(n-1, n-1)$ that do not cross above the line $y=x$.  Every such lattice path $L$ can be identified uniquely with a ballot sequence $\mathbf{b}_L = (b_1,\ldots, b_{n-2})$ where $(i, b_i)$ is the point along the path on the vertical line $x = i$ with the largest $y$-coordinate.

We want to count the number of these ballot sequences that begin with at least $m-2$ entries equal to $1$, or equivalently, the number of lattice paths from $(1,1)$ to $(n-1,n-1)$ that do not cross above the line $y=x$ and begin with at least $m-2$ east steps.  Reversing the order of such a path, and then replacing each north step with an east step and each east step with a north step, gives a bijection with the set of lattice paths from $(1,1)$ to $(n-1,n-1)$ that do not cross above the line $y=x$ and end with at least $m-2$ north steps.  This set is clearly in bijection with the set of lattice paths from $(1,1)$ to $(n-1, n-m+1)$ that do not cross above the line $y=x$, the number of which is given by $B(n-2,n-m)$.
\end{proof}

\begin{proof}[First proof of Theorem \ref{main-path-theorem}]
Lemma \ref{bseq_count} together with the earlier observation that an arithmetical structure of $\P_n$ has $\rr(1) = m$ if and only if it is equal to $\mathbf{A}_n(\mathbf{b})$ for some sequence $\mathbf{b} = (b_1,\ldots, b_{n-m})$ satisfying $1\le b_i \le i+m - 2$ and $b_i \le b_{i+1}$, completes the first proof of Theorem \ref{main-path-theorem}.
\end{proof}

We give a second proof based on recurrences satisfied by counts for certain classes of lattice paths.
\begin{proof}[Second proof of Theorem \ref{main-path-theorem}]
We argue by induction on $k=\rr(1)$.  In the case $k=1$, we have $B(n-2,n-1)=0$ and Lemma~\ref{path-lemma} implies that there are no arithmetical structures on $\P_n$ with $\rr(1) = 1$, completing the proof in this case.  In the case $k=2$, Theorem \ref{path-count} implies that $\Cballot(n,2) = B(n-2,n-2)=C_{n-2}$.

Now, for fixed $n$, suppose that~\eqref{main-path-thm-eqn} holds for all $j$ satisfying $2\le j \le k < n$.  Suppose that $\rr = (r_1,\ldots, r_n) \in \Arith(\P_n)$ with $\rr(1) = k+1$.  Let $m=\min\{i>1\st r_i=1\}$. Note that $2 \le  m \le n-k+1$.

By Lemma \ref{path-lemma}, the truncated vector $\rr' = (r_1,\ldots, r_m)$ is an arithmetical $r$-structure on $\P_m$ with $\rr'(1) = 2$ and $\rr'' = (r_m,\ldots, r_n)$ is an arithmetical $r$-structure on $\P_{n-m+1}$ with $\rr''(1) = k$. Moreover, every such pair $\rr',\rr''$ gives rise to an arithmetical $r$-structure $\rr$ on $\P_n$ with $\rr(1)=k+1$.  The number of possible choices for such a pair is $\Cballot(m,2) \Cballot(n-m+1,k)$. Therefore, by induction,
\begin{align*}
\Cballot(n,k+1) &= \sum_{m=2}^{n-k+1} \Cballot(m,2) \Cballot(n-m+1,k) \\
&=\sum_{m=2}^{n-k+1} B(m-2,m-2) B(n-m-1,n-m-k+1)\\
&= B(n-2,n-k-1)
\end{align*}
\noindent where the last equality follows from \cite[4.9]{ballot} after a change of variables. (Replace Carlitz's $j,n,k$ with $m-1,n-k-1,n-k$ respectively, and recall that Carlitz's $f(n,k)$ is our $B(n-1,k-1)$.)
\end{proof}

Our next goal is to prove a refined counting result for arithmetical $d$-structures of $\P_n$ with a fixed entry.

\begin{theorem} \label{ballot-path}
For each $i=1,\ldots,n$ and $0\leq k\leq n-2$, the number of arithmetical $d$-structures $(d_1,\ldots, d_n)$ on $\P_n$ with $d_i=n-k-1$ is equal to $B(n-2,k)$.
\end{theorem}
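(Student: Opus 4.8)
The plan is to realize arithmetical $d$-structures on $\P_n$ as triangulations of a convex $(n+1)$-gon in a way that turns each entry $d_i$ into the number of triangles at a prescribed vertex, and then to exploit the rotational symmetry of the polygon together with Remark~\ref{Ballot-Triangulation}. The heart of the proof is the following bijection: fixing a convex polygon $Q$ with vertices $v_0,v_1,\dots,v_n$ in cyclic order, the set $\Arith(\P_n)$ is in bijection with the set of triangulations of $Q$, in such a way that for each $i\in\{1,\dots,n\}$ the entry $d_i$ equals the number of triangles incident to $v_i$ in the corresponding triangulation.

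I would establish this bijection starting from Corollary~\ref{path-characterization}. Given an $r$-structure $\rr=(r_1,\dots,r_n)$ on $\P_n$, set $r_0=r_{n+1}=0$; then $r_{i-1}+r_{i+1}=d_ir_i$ for every $i\in\{1,\dots,n\}$, with $r_1,\dots,r_n\ge 1$, which is exactly the data of a Conway--Coxeter frieze pattern of the $(n+1)$-gon. The triangulation it encodes has quiddity sequence $(d_0,d_1,\dots,d_n)$, where the quiddity $d_0$ at the ``cut'' vertex $v_0$ is determined by $\dd$: a triangulation of an $(n+1)$-gon has $n-1$ triangles, so $\sum_{j=0}^n d_j = 3(n-1)$ and hence $d_0=\rr(1)-1$ by Theorem~\ref{critical-group-path} (this value plays no role below). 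That this correspondence is a genuine bijection --- surjectivity being the delicate direction --- is part of the Conway--Coxeter theory, the relevant point being that the boundary relation $r_{n-1}=d_nr_n$ in~\eqref{path-d-r-equalities} is what ensures the extended sequence $0,r_1,\dots,r_n,0$ closes up into a frieze. Alternatively, and avoiding frieze theory, one can build the bijection by induction on $n$: the base case $\P_2$ corresponds to a triangle with quiddity $(1,1,1)$, the Laplacian structure on $\P_n$ corresponds to the fan triangulation of $Q$ from $v_0$, and by Propositions~\ref{subdivision-prop} and~\ref{blowup-path} every other structure on $\P_n$ is a subdivision at some position $i$ of a structure on $\P_{n-1}$ --- an operation which on the polygon side is exactly the insertion of a new ``ear'' vertex between $v_{i-1}$ and $v_i$, raising the triangle counts at $v_{i-1}$ and $v_i$ by one and giving the new vertex count $1$, in precise agreement with~\eqref{path-subdivision}.

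Granting the bijection, the theorem follows immediately. For fixed $i$, the number of arithmetical $d$-structures on $\P_n$ with $d_i=n-k-1$ equals the number of triangulations of $Q$ with exactly $n-k-1$ triangles at $v_i$. The rotation action of $\Z/(n+1)\Z$ on the triangulations of $Q$ carries a triangulation with $t$ triangles at $v_j$ to one with $t$ triangles at $v_{j+1}$, so this number does not depend on $i\in\{0,1,\dots,n\}$; in particular it equals the number of triangulations of a convex $(n+1)$-gon with exactly $n-k-1=(n-2)-k+1$ triangles at a distinguished vertex. By Remark~\ref{Ballot-Triangulation} this number is $B(n-2,k)$.

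The main obstacle is the bookkeeping behind the key bijection: one must verify that the frieze/triangulation correspondence really is a bijection and that $d_i$ matches the triangle count at $v_i$; in the inductive formulation this reduces to checking that the two subdivision sequences producing a common structure (Lemma~\ref{lemma_bseq}) yield the same triangulation. As a consistency check in the boundary cases $i\in\{1,n\}$, note that $d_1=r_2$ is simply the second entry of the $r$-structure, and following the construction of $\mathbf{A}_n(\mathbf{b})$ one sees that $d_1$ exceeds by one the number of leading $1$'s of the canonical sequence $\mathbf{b}$ of Proposition~\ref{prop_bseq}; tracing this through Lemma~\ref{lemma-initial_string} identifies the number of structures on $\P_n$ with $d_1=n-k-1$ with the number of length-$(n-2)$ ballot sequences having at least $n-k-2$ leading $1$'s, which is again $B(n-2,k)$. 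The point of the polygon picture is that it upgrades this boundary computation to all interior positions $i$ at once, via the cyclic symmetry that $\P_n$ itself does not possess.
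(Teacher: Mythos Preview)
Your proposal is correct and follows essentially the same route as the paper: both set up a bijection between $\Arith(\P_n)$ and triangulations of an $(n{+}1)$-gon under which $d_i$ becomes the number of triangles at vertex $v_i$, then invoke rotational symmetry together with Remark~\ref{Ballot-Triangulation}. The paper builds the bijection inductively via edge-gluing/subdivision (your ``alternatively'' paragraph) and mentions the Conway--Coxeter frieze interpretation only afterward, whereas you lead with the frieze viewpoint; your boundary consistency check for $i=1$ via Lemma~\ref{lemma-initial_string} is extra and does not appear in the paper's proof.
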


We prove Theorem \ref{ballot-path} in two parts.  We first introduce the notation $d_0=3n-3-\sum_{j=1}^n d_j$ and note that by Corollary~\ref{cor:10} the result extends to the special case $i=0$.  We then define a bijection between triangulations of an $(n+1)$-gon and arithmetical $d$-structures on $\mathcal{P}_n$.  Under this bijection, clockwise rotation of a triangulation induces a correspondence between the set of arithmetical $d$-structures $(d_1,\dots, d_n)$ on $\mathcal{P}_n$ with $d_0 = n-k-1$ (resp. $d_i = n-k-1$) and the set of arithmetical $d$-structures on $\mathcal{P}_n$ with $d_1 = n-k-1$ (resp. $d_{i+1} = n-k-1$).

%First, observe that the result extends to the case $i=0$, if we define $d_0 = 3n-3 - \sum_{j=1}^n d_j$, since by Theorem \ref{critical-group-path} we have $d_0 = \rr(1)-1$ and by Theorem \ref{main-path-theorem} the number of arithmetical $d$-structures with $d_0=n-k-1$ is $B(n-2,k)$.

\begin{proof}
First, by the above definition and by Theorem \ref{critical-group-path} we have $d_0 = \rr(1)-1$.  By Theorem \ref{main-path-theorem} the number of arithmetical $d$-structures with $d_0=n-k-1$ is $B(n-2,k)$.

%For the main part of the proof, we define a bijection between triangulations of an $(n+1)$-gon and arithmetical $d$-structures on $\mathcal{P}_n$.

For the main part of the proof, we fix a labeling of the vertices of an $(n+1)$-gon as $0,1,2,\dots,n$, in clockwise order.  For each triangulation $T$ of the $(n+1)$-gon, define $D(T) = (D_0,D_1,\dots, D_n)$ by
$$D_i = \#\left\{\mathrm{triangles~incident~to~vertex~}i\right\}.$$
The theorem then reduces to the following claim.

\begin{Claim} \label{triangulation-to-d}
The sequence $(d_1,\dots,d_n) = (D_1,\dots, D_n)$ is an arithmetical $d$-structure of $\mathcal{P}_n$.    Moreover, the map $D$ is a bijection from the set of triangulations of the $(n+1)$-gon to the set of arithmetical $d$-structures on $\mathcal{P}_n$.
\end{Claim}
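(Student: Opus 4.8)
The plan is to induct on $n$, matching ``deleting an ear'' of a triangulation with the smoothing operation of Proposition~\ref{blowup-path}(b), and its inverse, ``inserting an ear'', with subdivision, Proposition~\ref{blowup-path}(a). The base case $n=2$ is immediate: the $3$-gon has a unique triangulation $T$, with $D(T)=(1,1)$, the unique arithmetical structure on $\P_2$.

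The combinatorial crux, which I would prove first, is a dichotomy: for $n\ge 3$, a triangulation $T$ of the $(n+1)$-gon is either the \emph{fan} with diagonals $\{0,2\},\{0,3\},\dots,\{0,n-1\}$, for which $D(T)=(1,2,\dots,2,1)$ is the Laplacian arithmetical structure on $\P_n$, or else has an \emph{ear} at some vertex $i$ with $2\le i\le n-1$ (that is, $D_i=1$). To prove this I would pass to the dual tree of $T$ (vertices are the triangles, edges join triangles sharing a diagonal): it has at least two leaves, each leaf is an ``ear triangle'' with two polygon edges, and distinct leaves produce distinct ear vertices. Vertices $0$ and $1$ cannot both be ears, nor can $0$ and $n$, since the corresponding ear triangles would share the polygon edge $\{0,1\}$, respectively $\{0,n\}$; so if no ear vertex lies in $\{2,\dots,n-1\}$ the ear set is exactly $\{1,n\}$ and the dual tree is a path. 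Following that path from the leaf $\{0,1,2\}$ --- each interior triangle has exactly one polygon edge, and the only available ones are $\{2,3\},\{3,4\},\dots,\{n-2,n-1\}$ --- forces $T=\{\{0,1,2\},\{0,2,3\},\dots,\{0,n-1,n\}\}$, the fan. This dichotomy is the one genuinely non-routine step.

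Second, I would record the dictionary lemma. For $n\ge3$, if $T$ has an ear at $i$ with $2\le i\le n-1$, then the unique triangle at $i$ is $\{i-1,i,i+1\}$; deleting it and the vertex $i$ and relabelling the surviving vertices $0,\dots,i-1,i+1,\dots,n$ as $0,\dots,n-1$ produces a triangulation $T'$ of the $n$-gon. Comparing $D(T')$ with $D(T)$ entry by entry --- the counts at the old vertices $i-1$ and $i+1$ each drop by one, and everything else is merely reindexed --- and matching against formula~\eqref{path-subdivision} shows that $D(T)$ is exactly the subdivision of $D(T')$ at position $i$. Conversely, each triangulation $T'$ of the $n$-gon and each $i\in\{2,\dots,n-1\}$ arise from a unique such $T$ (undo the relabelling and glue the ear back on). The only danger here is index bookkeeping.

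Finally I would run the induction, assuming the claim for $n-1$. That $D(T)$ is an arithmetical $d$-structure on $\P_n$: if $T$ is the fan this is the Laplacian structure; otherwise choose an ear at $i\in\{2,\dots,n-1\}$, pass to $T'$, use the inductive hypothesis that $D(T')$ is an arithmetical structure on $\P_{n-1}$, and conclude by Proposition~\ref{blowup-path}(a) and the dictionary lemma that $D(T)$, being the subdivision of $D(T')$ at $i$, is an arithmetical structure on $\P_n$. That $D$ is onto: given an arithmetical $d$-structure $\dd$ on $\P_n$, it is $D$ of the fan if it is the Laplacian structure; otherwise Proposition~\ref{only-two-path} gives an $i$ with $1<i<n$ and $d_i=1$, so smoothing at $i$ (Proposition~\ref{blowup-path}(b)) yields an arithmetical structure $\dd'$ on $\P_{n-1}$, which by induction equals $D(T')$ for some $T'$; inserting an ear at $i$ gives a $T$ with $D(T)$ equal to the subdivision of $D(T')=\dd'$ at $i$, and this equals $\dd$ because subdividing the smoothing at the same position recovers the original. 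Hence $D$ is a surjection from the set of triangulations of the $(n+1)$-gon onto the set of arithmetical $d$-structures on $\P_n$; since both sets have cardinality $C_{n-1}$ (Theorem~\ref{path-count}), $D$ is a bijection.
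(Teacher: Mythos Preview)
Your proof is correct and follows essentially the same inductive scheme as the paper---matching ear removal/insertion on triangulations with smoothing/subdivision on arithmetical structures (Proposition~\ref{blowup-path})---but it diverges from the paper in two places worth noting. First, you prove an explicit dichotomy (fan versus ear at some $i\in\{2,\dots,n-1\}$) using the dual tree; the paper instead allows gluing an ear at \emph{any} position $0\le i\le n-1$, so that even the fan arises from a smaller triangulation, and never needs to single out the Laplacian case. Second, and more substantively, you establish bijectivity by proving surjectivity directly (via Proposition~\ref{only-two-path} and smoothing) and then invoking the equality of cardinalities $|\Arith(\P_n)|=C_{n-1}$ from Theorem~\ref{path-count} together with the classical Catalan count of triangulations; the paper instead argues that two gluing sequences yield the same triangulation precisely when they are related by the moves of Lemma~\ref{lemma_bseq}, so that the map factors through the ballot-sequence bijection of Proposition~\ref{prop_bseq}. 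Your route is cleaner and avoids that somewhat terse appeal to Lemma~\ref{lemma_bseq}, at the cost of importing the Catalan enumeration of triangulations as an external fact; the paper's route is more self-contained but leaves more to the reader.
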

%Note that under the claimed bijection, clockwise rotation of a triangulation induces a correspondence between the set of arithmetical $d$-structures $(d_1,\dots, d_n)$ on $\mathcal{P}_n$ with $d_0 = n-k-1$ (resp. $d_i = n-k-1$) and the set of arithmetical $d$-structures on $\mathcal{P}_n$ with $d_1 = n-k-1$ (resp. $d_{i+1} = n-k-1$).

Observe that each triangulation consists of $n-1$ triangles and each triangle will be adjacent to three vertices.  In particular, $\sum_{j=0}^n D_j=3n-3$ so that $D_0 = 3n-3 - \sum_{j=1}^n D_j$.  In particular, $D_0=d_0$, motivating the above notation.

%Moreover, observe that each triangulation consists of $n-1$ triangles and each triangle will be adjacent to three vertices.  In particular, $\sum_{j=0}^n D_j=3n-3$ so that $D_0 = 3n-3 - \sum_{j=1}^n D_j$.  In particular, $D_0=d_0$, motivating the above notation.

\begin{proof} We now prove Claim~\ref{triangulation-to-d} by induction.  If $n=2$, then the unique and trivial triangulation of a $3$-gon corresponds to the unique arithmetical $d$-structure of $\mathcal{P}_2$, namely $(d_1,d_2) = (1,1)$.  For $n=2$ and this arithmetical $d$-structure, we also have $d_0 = 3-1-1 =1$.

For $n\geq 3$, a triangulation $T$ of an $(n+1)$-gon is obtained by gluing a triangle to the exterior of a triangulation $T'$ of an $n$-gon.  After relabeling the vertices as described below, let $D(T') = (d_0', d_1',\dots, d_{n-1}')$, where $d_j'$ is the number of triangles incident to vertex $j$ in the triangulation $T'$, so by our inductive hypothesis we have that $(d_1',\dots, d_{n-1}')$ is an arithmetical $d$-structure of $\mathcal{P}_{n-1}$.

We next follow a procedure related to Conway and Coxeter's work \cite[(23)]{CC} on frieze patterns and triangulated polygons.
See also \cite[Theorem 2.1]{T} for a more recent description of this procedure.
In this language, the tuple $D(T’)$ is known as the \emph{quiddity sequence} associated to the triangulation $T$’.
For any $0 \le i \le n-1$ we consider the effect of gluing a triangle on the edge between vertices $i$ and $i+1$ of $T'$ and increasing the label on all vertices $j>i$ by one (note that in the case of $i=n-1$ then we are gluing a triangle between vertices $n-1$ and $0$, and we do not need to renumber any vertices). This will create a new triangulation $T$, and if we set $d_j$ to be the number of triangles adjacent to vertex $j$ in this new triangulation, we see that:

\begin{equation}
d_j = \begin{cases}
d'_j \caseif 0 \le j < i-1,\\
d'_{i-1}+1 \caseif j=i-1,\\
1 \caseif j=i,\\
d'_{i}+1 \caseif j=i+1,\\
d'_{j-1} \caseif i+1 < j \le n,
\end{cases}
\end{equation}

agreeing with (\ref{path-subdivision}) and showing that $D(T) = (d_0,d_1,\dots, d_n)$ where $\dd = (d_1,\dots, d_n)$ is the subdivision of $\dd' = (d_1',\dots, d_{n-1}')$ at position $i$.  Hence, by Propositions \ref{blowup-path} and \ref{subdivision-prop}, the resulting $\dd$ is in fact an arithmetical $d$-structure.  We conclude this map is a bijection since the ways in which two sequences of gluing triangles construct the same triangulation is dictated by the same relations  given in Lemma \ref{lemma_bseq}.  This completes the proof of Claim~\ref{triangulation-to-d}.
\end{proof}

By Remark \ref{Ballot-Triangulation}, there are in fact $B(n-2,k)$ triangulations of an $(n+1)$-gon that have $d_i= n-k-1$ triangles incident to a given vertex i.

Alternatively, we observe that if $T$ is a triangulation of an $(n+1)$-gon and $\rho(T)$ is its clockwise rotation, then $D(T) = (d_0,d_1,\dots, d_n)$ and $D(\rho(T)) = (d_n, d_0,d_1,\dots, d_{n-1})$.  Hence rotation induces a bijection between arithmetical $d$-structures $(d_1,\dots, d_n)$ on $\mathcal{P}_n$ with $d_i=n-k-1$ and arithmetical $d$-structures on $\mathcal{P}_n$ with $d_{i+1} = n-k-1$ (for $0\leq i \leq n-1$).  Combining this bijection with the $i=0$ case completes the proof of Theorem~\ref{ballot-path}.
\end{proof}

\begin{Example}
We illustrate the above ideas with the following example, where $T'$ is a triangulation of a pentagon and $T$ is the triangulation of a hexagon obtained by gluing a single triangle to $T'$.

\begin{figure}[h]
\begin{tikzpicture}
[scale=.4,auto=left,minimum size=.7cm,every node/.style={circle,fill=blue!20}]
    \foreach \a/\text in {180/4,120/0,60/1,0/2,240/3}
    \node (\text) at (\a:4cm) {\text};

\foreach \from/\to in {4/0,0/1,1/2,2/3,3/4,1/4,1/3}
    \draw (\from) -- (\to);
    \node [draw=none,fill=none] at (0,-6) {(i) $T'$};

\begin{scope}[shift={(15,0)}] % shift the second part of the figure to the right

\foreach \a/\text in {180/5,120/0,60/1,0/2,300/3,240/4}
    \node (\text) at (\a:4cm) {\text};

\foreach \from/\to in {5/0,0/1,1/2,2/3,3/4,4/5, 2/4,1/4, 5/1}
    \draw (\from) -- (\to);
    \node [draw=none,fill=none] at (0,-6) {(ii) $T$};
\end{scope}
\end{tikzpicture}
\end{figure}

Recalling that for any triangulation $T$ we have $D(T)=(d_0,\ldots d_n)$ where $d_i$ is the number of triangles adjacent to vertex $i$, we see that $D(T') = (1,3,1,2,2)$ and therefore $T'$ corresponds to the arithmetical $d$-structure $(3,1,2,2)$ on $\mathcal{P}_4$.  After gluing on a triangle between vertices $2$ and $3$ and updating the vertex labels, we obtain the triangulation $T$ and the corresponding $D(T)=(1,3,2,1,3,2)$, giving the arithmetical $d$-structure $(3,2,1,3,2)$ on $\mathcal{P}_5$.
\end{Example}

\begin{Remark}
Questions (17) and (18) of \cite{CC} suggest a relationship between the quiddity sequence, which corresponds to the second row of the associated frieze pattern, and the diagonals in the same pattern. In fact, these diagonals correspond to the $\rr$-vectors of the arithmetical structures with a given quiddity sequence as its $\dd$-vector.
\end{Remark}

In the proof of Theorem \ref{ballot-path} we defined a bijection $D$ from triangulations to arithmetical structures on the path, and in Proposition~\ref{prop_bseq} we gave a bijection $\mathbf{A}_n^{-1}$ from arithmetical $d$-structures on $\P_n$ to ballot sequences.  Composing these two bijections together, we get one mapping triangulations to ballot sequences.  Given this composite bijection, it is natural to consider the map $f_n$ from ballot sequences to ballot sequences that corresponds to the action of rotating a given triangulation $T$ of a polygon.  More explicitly, we are interested in the map $f_n$ so that $f_n(\mathbf{A}_n^{-1}(D(T))) = \mathbf{A}_n^{-1}(D(\rho(T)))$. We now give a description of this map.

%While we have defined a bijection between triangulations and arithmetical structures on the path, we note that we can also define a bijection between arithmetical $d$-structures on $\P_n$ and ballot sequences.  This map commutes with the bijection $T \rightarrow D_T$ described above and, in particular, corresponds to the rotation of a given triangulation of a polygon.  We conclude this section with a description of this bijection.
In particular, let $\mathcal{B}(n)$ denote the set of ballot sequences of length $n$, i.e., $n$-tuples $\mathbf{b} = (b_1,\ldots, b_n)$ consisting of $n$ nonnegative integers, where $b_i \le b_{i+1}$, and $b_i \le i$ for all $i$. We inductively define the bijection $f_n \colon\ \mathcal{B}(n) \rightarrow \mathcal{B}(n)$.  Let $f_1((1)) = (0)$, and $f_1((0)) = (1)$.  Suppose we have defined $f_{n-1}(\mathbf{b})$ for all sequences $\mathbf{b} \in \mathcal{B}(n-1)$.  Given a sequence $\mathbf{b} = (b_1,\ldots, b_{n-1},b_n) \in \mathcal{B}(n)$, let $\mathbf{b}' = (b_1,\ldots, b_{n-1}) \in \mathcal{B}(n-1)$.

%In particular, let $\mathcal{B}(n)$ denote the set of sequences of length $n$ given by $\mathbf{b} = (0,0,\ldots,0,b_1,\ldots, b_k)$ consisting of $k$ positive integers with $n-k$ leading zeros, where $0\leq k \le n,\ b_i \le b_{i+1}$, and $b_i \le n-k+i$ for all $i$.  We also include the zero sequence $(0,0,\ldots,0)$ in each $\mathcal{B}(n)$.  We inductively define the bijection $f_n \colon\ \mathcal{B}(n) \rightarrow \mathcal{B}(n)$.  Let $f_1((1)) = (0)$, and $f_1((0)) = (1)$.  Suppose we have defined $f_{n-1}(\mathbf{b})$ for all sequences $\mathbf{b} \in \mathcal{B}(n-1)$.  Given a sequence $\mathbf{b} = (0,\ldots,0,b_1,\ldots, b_{k-1},b_k) \in \mathcal{B}(n)$, let $\mathbf{b}' = (0,\ldots,0,b_1,\ldots, b_{k-1}) \in \mathcal{B}(n-1)$.  Define
Define
\[
f_n(\mathbf{b}) := \begin{cases}
f_{n-1}(\mathbf{b}') \text{ with a } b_n+1 \text{ appended to the end of it} & \text{if } b_n < n,\\
f_{n-1}(\mathbf{b}') \text{ with a } 0 \text{ appended to the beginning of it} & \text{if }b_n=n.
\end{cases}
\]

%It is easy to check that if $f_{n-1}$ is a bijection of $\mathcal{B}(n-1)$ then $f_n$ is a bijection of $\mathcal{B}(n)$.

\begin{Remark}\label{rem:fn}
It is possible to give an explicit description of $f_n(\mathbf{b})$. In particular, let $\mathbf{b} = (b_1,\ldots, b_n) \in\mathcal{B}(n)$ and create a new sequence $\mathbf{b}+(1,1,\ldots,1)$, where addition in the $i$\textsuperscript{th} coordinate is taken modulo $i+1$.
Then $f_n(\mathbf{b})$ is the vector obtained by ``right-justifying" the nonzero entries in $\mathbf{b}+(1,1,\ldots,1)$.  In particular, $f_n(\mathbf{b})$ begins with the same number of zeroes that are in the string $\mathbf{b}+(1,1,\ldots,1)$ and then contains the same numbers as the nonzero entries of this string in the same order.  It is straightforward to verify that this definition of $f_n$ agrees with the one given above.
\end{Remark}

\begin{Example}
If $n=3$, then
\begin{center}
\begin{tabular}{l@{\extracolsep{2cm}}l@{\extracolsep{2cm}}l}
$(1,1,1)\overset{f_3}{\longmapsto} (0,2,2)$, & $(0,1,1)\overset{f_3}{\longmapsto} (1,2,2)$,  & $(0,0,1)\overset{f_3}{\longmapsto} (1,1,2)$, \\
$(1,1,2)\overset{f_3}{\longmapsto} (0,2,3)$, & $(0,1,2)\overset{f_3}{\longmapsto} (1,2,3)$,  & $(0,0,2)\overset{f_3}{\longmapsto} (1,1,3)$, \\
$(1,1,3)\overset{f_3}{\longmapsto} (0,0,2)$, & $(0,1,3)\overset{f_3}{\longmapsto} (0,1,2)$,  & $(0,0,3)\overset{f_3}{\longmapsto} (0,1,1)$, \\
$(1,2,2)\overset{f_3}{\longmapsto} (0,0,3)$, & $(0,2,2)\overset{f_3}{\longmapsto} (0,1,3)$,  & $ (0,0,0) \overset{f_3}{\longmapsto} (1,1,1)$. \\
$(1,2,3)\overset{f_3}{\longmapsto} (0,0,0)$, & $(0,2,3)\overset{f_3}{\longmapsto} (0,0,1)$,  &
\end{tabular}
\end{center}
\end{Example}

We leave the fact that $f_n$ is a bijection and that it corresponds to rotation of a triangulation as exercises for the reader.  This approach leads to an alternative proof to Theorem \ref{ballot-path}.

%%% From here to the end of the section: this is text suggested by Nathan in his e-mail of 9/17, mildly edited by Jeremy.

It is natural to ask for analogues of Theorem~\ref{main-path-theorem} for arithmetical $d$-structures.  That is, how many arithmetical structures $(\mathbf{d}, \mathbf{r})$ on $\P_n$ have $\mathbf{d}(1) = k$?  Results of Aigner and Schulze~\cite{AignerSchulze} give a partial answer.

\begin{prop}
Let $n$ be a positive integer.  The number of arithmetical structures $(\mathbf{d}, \mathbf{r})$ on $\P_{n+2}$ with $\mathbf{r}(1) = 2$ and $\mathbf{d}(1) = k$ is given by
\[
\binom{n-1}{2k-2} 2^{n+1-2k} C_{k-1}.
\]
\end{prop}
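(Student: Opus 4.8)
The plan is to transport the problem to triangulations of a convex polygon, using the bijection $D$ between arithmetical $d$-structures on $\P_{n+2}$ and triangulations of the $(n+3)$-gon built in the proof of Theorem~\ref{ballot-path}; throughout, call a polygon vertex an \emph{ear} if it lies in a unique triangle. (Alternatively the statement can simply be quoted from Aigner and Schulze~\cite{AignerSchulze}, but the plan below reconstructs it.)

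First I would rephrase the two constraints. Under $D$ one has $D_i=d_i$ for $1\le i\le n+2$ and $D_0=d_0=\mathbf r(1)-1$ by Theorem~\ref{critical-group-path}; hence $\mathbf r(1)=2$ is equivalent to vertex $0$ being an ear, so that its unique triangle is $\{n+2,0,1\}$. When this holds, $d_1=r_2\ge 2$ and $d_{n+2}=r_{n+1}\ge 2$, since $r_1=r_{n+2}=1$ by Lemma~\ref{path-lemma} and a third coordinate equal to $1$ would violate $\mathbf r(1)=2$; thus vertices $1$ and $n+2$ are never ears. Therefore $\mathbf d(1)=k$ says exactly that $k$ of the vertices $2,\dots,n+1$ are ears, i.e.\ that the triangulation has exactly $j:=k+1$ ears in total. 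So the problem becomes: count triangulations of the $(n+3)$-gon having an ear at the fixed vertex $0$ and exactly $j=k+1$ ears altogether.

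The core of the argument is a decomposition of the dual tree. If $\Gamma$ is the dual tree of such a triangulation, then $\Gamma$ has $n+1$ nodes (one per triangle), maximum degree $3$, and its leaves are exactly the ears, so $\Gamma$ has $j=k+1$ leaves; a one-line degree count (using that $\Gamma$ has $n$ edges) then forces $\Gamma$ to have exactly $k-1$ nodes of degree $3$ and $p:=n+1-2k$ nodes of degree $2$. Root $\Gamma$ at the leaf corresponding to the ear at vertex $0$, getting a rooted plane tree $\Gamma^{\circ}$. I would then establish the key bijection: relative to the labeled polygon, the triangulation is equivalent to $\Gamma^{\circ}$ together with a binary (``left/right'') choice at each of its degree-$2$ nodes --- intuitively, one reconstructs the triangulation triangle by triangle by walking along $\Gamma$ outward from the root ear, the continuation being forced at ears and at degree-$3$ triangles and being a binary choice of ``which way the triangle bends'' at each degree-$2$ triangle. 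Granting this, I would count the decorated trees in three independent stages: (i) contracting the $p$ degree-$2$ nodes turns $\Gamma^{\circ}$ into a planted plane binary tree whose $k$ leaves are the remaining ears, and there are $C_{k-1}$ of these; (ii) such a tree has $2k-1$ edges, and $\Gamma^{\circ}$ is recovered by distributing the $p=n+1-2k$ degree-$2$ nodes among those edges, in $\binom{(n+1-2k)+(2k-1)-1}{2k-2}=\binom{n-1}{2k-2}$ ways; (iii) the $p$ binary choices give a factor $2^{n+1-2k}$. The product is $\binom{n-1}{2k-2}\,2^{n+1-2k}\,C_{k-1}$. (Degenerate cases are easy --- for $k=1$ the contracted tree is a single planted edge --- and the formula correctly vanishes unless $1\le k$ and $2k-2\le n-1$, consistent with the $(n+3)$-gon having at most $\lfloor(n+3)/2\rfloor$ ears.)

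The step I expect to be the real obstacle is establishing that key bijection: that a labeled triangulation of the $(n+3)$-gon with an ear at vertex $0$ is faithfully encoded by its rooted dual tree decorated with a binary turn at each degree-$2$ node, with every such decorated tree occurring exactly once. I would prove it by induction, building the triangulation up one triangle at a time (each new triangle glued along a boundary edge, i.e.\ adjoined as a leaf of the dual tree) and tracking how the ear set and the turn data evolve; this is routine but fiddly, and it is precisely the combinatorial content of the result of Aigner and Schulze~\cite{AignerSchulze} that the proposition records. Everything else reduces to the elementary tree and lattice-point counts above.
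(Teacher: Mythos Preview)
Your approach is correct and takes a genuinely different route from the paper's. The paper's proof is a short translation followed by a citation: it identifies arithmetical structures on $\P_{n+2}$ with $\mathbf r(1)=2$ with \emph{admissible sequences} $a_1,\dots,a_n>1$ satisfying $a_i\mid a_{i-1}+a_{i+1}$ (with $a_0=a_{n+1}=1$), observes that an interior $d_i$ equals $1$ precisely at a local maximum $a_i=a_{i-1}+a_{i+1}$ while the two endpoint $d$-entries are automatically greater than $1$, and then quotes Aigner--Schulze's count of admissible sequences with exactly $k$ local maxima. You instead push the problem through the triangulation bijection $D$ of Theorem~\ref{ballot-path} and count via the dual tree. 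The ``key bijection'' you flag as the obstacle is in fact standard: deleting the ear at vertex $0$ leaves a triangulation of the $(n+2)$-gon rooted at the edge $\{1,n+2\}$, and the classical correspondence between such rooted triangulations and binary trees on $n$ nodes (each node carrying independent left/right child slots) is exactly your rooted dual tree together with the left/right turn datum at each one-child node; under it the $k$ non-root ears become the $k$ leaves of the binary tree. Your three-step count (full binary skeleton, edge subdivision, left/right choices) is then the familiar refinement of $C_n$ by number of leaves --- Touchard's identity --- so the step you worry about is not hard to make rigorous. What your route buys is a self-contained argument that stays inside the paper's own machinery (the map $D$); the paper's version is shorter but outsources the combinatorics entirely to~\cite{AignerSchulze}.
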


\begin{proof}
Call a sequence of integers $a_1, a_2, \dots, a_n$ \emph{admissible} if all $a_i > 1$ and $a_i$ divides $a_{i-1} + a_{i+1}$ for $i = 1,\ldots, n$, where we define $a_0 = a_{n+1} = 1$.  An admissible sequence has a \emph{local maximum} in position $i$ if and only if $a_i = a_{i-1} + a_{i+1}$.  Aigner and Schulze show that the expression in the proposition is the number of admissible sequences of length $n$ with precisely $k$ local maxima \cite[equation~(1)]{AignerSchulze}.

Admissible sequences of length $n$ are in bijection with arithmetical $r$-structures on $\P_{n+2}$ that have $\mathbf{r}(1) = 2$.  Let $(\mathbf{d}, \mathbf{r})$ be an arithmetical structure on $\P_{n+2}$ with $\mathbf{d} = (d_0,d_1,\ldots, d_{n+1})$ and $\mathbf{r} = (r_0, r_1, \ldots, r_{n+1})$.  Then $(r_1,\ldots, r_n)$ is an admissible sequence of length $n$ if and only if $\mathbf{r}(1) = 2$.  For $i$ satisfying $1\le i \le n$ we see that $d_i = 1$ if and only if $r_i = r_{i-1} + r_{i+1}$, that is, when the corresponding admissible sequence of length $n$ has a local maximum in position $i$.  We see that $d_0 = 1$ if and only if $r_0 = r_1 = 1$, and similarly $d_{n+1} = 1$ if and only if $r_{n+1} = r_n = 1$.  Neither of these can happen when $\mathbf{r}(1) = 2$ since $n+1 > 1$.
\end{proof}
It seems significantly more challenging to give formulas for the number of arithmetical structures $(\mathbf{d}, \mathbf{r})$ on $\P_n$ for which $\mathbf{r}(1) = m$ and $\mathbf{d}(1) = k$ when $m > 2$.

When we restrict to arithmetical structures with $\mathbf{r}(1) = 2$ we can derive similar results for other properties considered in \cite{AignerSchulze}.  For example, equation~(2) of that paper gives the number of admissible sequences of length $n$ with precisely $k$ rises $a_i < a_{i+1}$, and equation~(3) gives the number of such sequences without monotone subsequences of length $3$. We do not pursue these directions here.  We do note that equation~(4) of \cite{AignerSchulze} can be interpreted as counting the number of arithmetical structures on $\P_n$ with $\mathbf{r}(1) = 2$ and $d_1 = k$, which is closely related to a special case of Theorem~\ref{ballot-path}.

%%%%%%%%%%%%%%%%%%%%%%%%%%%%%%%%%%%%%%%%%%%%%%%%%%%%%%%%%%%%%%%%%%%%%%

\section{Cycles}\label{sec:cycles}

As in the case of paths, the arithmetical structures on the $n$-cycle $\C_n$ are controlled by Catalan combinatorics.  The main result of this section, Theorem~\ref{main-cycle-theorem}, states that for each $k\in[n]$, the number of arithmetical structures $(\dd,\rr)$ on $\C_n$ with $\rr(1)=k$ is
\[\multiset{n}{n-k}=\binom{2n-k-1}{n-k}\]
(where the first symbol denotes the number of multisubsets of $[n]$ of cardinality $n-k$), and consequently
\[|\Arith(\C_n)|=\binom{2n-1}{n-1}.\]
As for Theorem \ref{main-path-theorem}, we give two separate proofs of this result.  The first proof constructs an explicit bijection between multisets and arithmetical structures, equivariant with respect to a natural $\Zz_n$-action on each.  The second proof proceeds via enumeration of lattice paths.

In addition, we show that the critical group $K(\C_n,\dd,\rr)$ is always cyclic, with cardinality $\rr(1)=3n-\sum_{j=1}^n d_j$ (Theorem~\ref{critical-group}).

We first state some basic results about arithmetical structures on cycles, some of which have been proved elsewhere.

\begin{prop} \label{c-two}
The cycle $\C_2$ has three arithmetical structures, namely
\[(\dd,\rr)\in\{((2,2),(1,1)),\ ((1,4),(2,1)),\ ((4,1),(1,2))\}.\]
\end{prop}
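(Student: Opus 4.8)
The key observation is that $\C_2$ is a multigraph with two vertices joined by two parallel edges, so its adjacency matrix is $A=\left(\begin{smallmatrix}0&2\\2&0\end{smallmatrix}\right)$. Writing $\dd=(d_1,d_2)$ and $\rr=(r_1,r_2)$, the equation $(\diag(\dd)-A)\rr=\mathbf 0$ becomes the pair of scalar equations
\[
d_1 r_1 = 2r_2, \qquad d_2 r_2 = 2 r_1.
\]
Since all $d_i$ and $r_i$ are positive integers, I would first multiply these two equations together to get $d_1 d_2\, r_1 r_2 = 4 r_1 r_2$, and cancel the positive quantity $r_1 r_2$ to conclude $d_1 d_2 = 4$. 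Hence $(d_1,d_2)$ is one of the three ordered pairs $(1,4)$, $(2,2)$, $(4,1)$.

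Next I would handle the three cases in turn. If $(d_1,d_2)=(2,2)$, the equations force $r_1=r_2$, and primitivity of $\rr$ gives $(r_1,r_2)=(1,1)$, so $(\dd,\rr)=((2,2),(1,1))$. If $(d_1,d_2)=(1,4)$, the first equation gives $r_1=2r_2$ (and the second is then automatically satisfied), so primitivity forces $r_2=1$, $r_1=2$, yielding $(\dd,\rr)=((1,4),(2,1))$. The case $(d_1,d_2)=(4,1)$ is symmetric under swapping the two coordinates and gives $(\dd,\rr)=((4,1),(1,2))$. Conversely, one checks directly that each of these three pairs satisfies both defining equations and has $\rr$ primitive, so the list is exactly $\Arith(\C_2)$.

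This proposition is essentially a direct computation, so I do not anticipate any real obstacle; the only point requiring care is to use the correct adjacency matrix for the $2$-cycle, namely the one with off-diagonal entries equal to $2$ (reflecting the two parallel edges), rather than treating $\C_2$ as a simple graph. I would state this convention explicitly at the start of the proof to avoid ambiguity, since it governs the entire computation and will also be needed for the general cycle $\C_n$ later in the section.
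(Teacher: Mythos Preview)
Your proof is correct and follows essentially the same approach as the paper: both identify the adjacency matrix of $\C_2$ as $\left(\begin{smallmatrix}0&2\\2&0\end{smallmatrix}\right)$ and reduce the problem to $d_1 d_2 = 4$, then read off the three possibilities. The paper's version is simply a terser statement of the same computation.
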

\begin{proof}
The adjacency matrix of $\C_2$ is $A=\begin{bmatrix}0&2\\2&0\end{bmatrix}$, so in order for $L$ to be singular we must have $d_1d_2=4$, leading to the three possibilities listed above.
\end{proof}

More generally, we note that for the cycle graph $\C_n$, a vector $\rr$ is in the nullspace of $L(\C_n,\dd)$ if and only if $r_{i-1}+r_{i+1} = r_id_i$ for each $i$, where the subscripts are taken mod $n$.  In analogy to Corollary~\ref{path-characterization} for paths, we therefore have:

\begin{prop} \label{cycle-characterization}
Let $\rr=(r_1,\dots,r_n)$ be a primitive positive integer vector.  Then $\rr$ is an arithmetical $r$-structure on $\C_n$ if and only if
\[
r_i|(r_{i-1}+r_{i+1}) \ \ \forall i\in[n]
\]
with the indices taken modulo $n$.
\end{prop}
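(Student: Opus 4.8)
The plan is to unwind the definition of an arithmetical $r$-structure and reduce the claim to the divisibility condition, using the coordinatewise description of the kernel equation noted just above the statement. Recall that $\rr$ is an arithmetical $r$-structure on $\C_n$ precisely when $\rr$ is primitive (a standing hypothesis here) and there exists a \emph{positive integer} vector $\dd$ with $(\diag(\dd)-A)\rr=\mathbf{0}$; by the observation preceding the proposition, the $i$-th coordinate of this matrix equation reads $d_ir_i=r_{i-1}+r_{i+1}$, with subscripts taken modulo $n$.

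For the forward implication, I would suppose $(\dd,\rr)\in\Arith(\C_n)$. Then for each $i\in[n]$ we have $d_i=(r_{i-1}+r_{i+1})/r_i$, and since $d_i$ is a positive integer, $r_i\mid(r_{i-1}+r_{i+1})$, as desired.

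For the converse, suppose $\rr$ is primitive and $r_i\mid(r_{i-1}+r_{i+1})$ for all $i$. Define $d_i:=(r_{i-1}+r_{i+1})/r_i$ for each $i\in[n]$. The divisibility hypothesis makes each $d_i\in\Z$; moreover $r_{i-1}+r_{i+1}$ is a positive integer (each $r_j\ge 1$), and any positive integer multiple of $r_i$ is at least $r_i$, so $d_i\ge 1$. Hence $\dd=(d_1,\dots,d_n)$ is a positive integer vector, and by construction $(\diag(\dd)-A)\rr=\mathbf{0}$. Together with the primitivity of $\rr$, this exhibits $(\dd,\rr)$ as an arithmetical structure on $\C_n$, so $\rr$ is an arithmetical $r$-structure.

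I expect no real obstacle: the argument is immediate once the definitions are in place. The only points deserving a moment's care are that $\dd$ be genuinely \emph{positive} (handled by the remark that a positive multiple of $r_i$ is $\ge r_i$) and that the cyclic indexing correctly encodes the adjacency matrix of $\C_n$ in every case — in particular for $n=2$, where $A_{12}=2$ and the two summands $r_{i-1},r_{i+1}$ coincide, which is exactly why the ``subscripts modulo $n$'' convention was set up beforehand and why the count in Proposition~\ref{c-two} comes out consistently.
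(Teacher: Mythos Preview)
Your proposal is correct and follows exactly the approach the paper takes: the paper does not even give a separate proof of this proposition, but simply observes (in the sentence immediately preceding it) that $\rr$ lies in the nullspace of $L(\C_n,\dd)$ if and only if $r_{i-1}+r_{i+1}=r_id_i$ for each $i$, and declares the result as an immediate consequence. Your write-up just makes this implication explicit in both directions, including the check that the recovered $d_i$ are positive integers, so there is nothing to add.
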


The next two results are analogous to Propositions~\ref{only-two-path} and~\ref{blowup-path} for paths.

\begin{prop}[{\cite[Thm.~6.5]{arithmetical}}]\label{only-two}
There is exactly one arithmetical structure $(\dd,\rr)$ on $\C_n$ such that $d_i\geq 2$ for all $i$, namely
$\dd=\2=(2,2,\dots,2)$ and $\rr=\1=(1,1,\dots,1)$.
\end{prop}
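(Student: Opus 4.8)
The plan is to translate the hypothesis $d_i\ge 2$ into a discrete convexity condition on the cyclic sequence $\rr=(r_1,\dots,r_n)$ and then show that a convex cyclic sequence must be constant.

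First I would recall from Proposition~\ref{cycle-characterization} (equivalently, directly from the nullspace equation for $L(\C_n,\dd)$) that an arithmetical structure on $\C_n$ satisfies $r_id_i=r_{i-1}+r_{i+1}$ for all $i$, with indices read modulo $n$. Hence the hypothesis $d_i\ge 2$ for all $i$ is equivalent to $r_{i-1}+r_{i+1}\ge 2r_i$ for all $i$, that is,
\[
r_{i+1}-r_i\ \ge\ r_i-r_{i-1}\qquad\text{for all }i\in[n].
\]

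Next I would set $\delta_i:=r_{i+1}-r_i$ for $i\in[n]$, with indices mod $n$ (so $\delta_n=r_1-r_n$). The displayed inequalities say precisely $\delta_i\ge\delta_{i-1}$ for every $i$ mod $n$, which chains around the cycle to $\delta_1\ge\delta_n\ge\delta_{n-1}\ge\cdots\ge\delta_1$, forcing all the $\delta_i$ to equal a common value $\delta$. Since $\sum_{i=1}^n\delta_i=\sum_{i=1}^n(r_{i+1}-r_i)=0$ telescopes around the cycle, we get $n\delta=0$, hence $\delta=0$ and $\rr$ is constant. Because $\rr$ is primitive this forces $\rr=\1$, and then $r_id_i=r_{i-1}+r_{i+1}$ gives $d_i=2$ for all $i$, i.e.\ $\dd=\2$. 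Conversely, $(\2,\1)\in\Arith(\C_n)$ obviously has all $d_i=2$, so this is the unique such structure.

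I do not anticipate a genuine obstacle here; the only point that needs care is the cyclic bookkeeping — checking that the chain of inequalities $\delta_i\ge\delta_{i-1}$ really closes up around the cycle, which is exactly where one uses that \emph{every} vertex of $\C_n$ has degree $2$ (so the inequality holds at every index, with no boundary exception as in the path case). As an alternative phrasing of the same computation, one can avoid differences: if $r_j=\max_i r_i$, then $2r_j\le r_{j-1}+r_{j+1}\le 2r_j$ forces $r_{j-1}=r_{j+1}=r_j$, and propagating this equality around the cycle again yields $\rr=\1$. Either version completes the proof.
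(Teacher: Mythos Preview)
Your proof is correct. The paper, however, does not actually prove this proposition: it is stated as a citation of \cite[Thm.~6.5]{arithmetical} (Corrales and Valencia) and used as a black box. So rather than matching or diverging from the paper's argument, you have supplied a short self-contained proof where the paper simply imports the result.

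Both versions of your argument are clean. The discrete-convexity / difference-sequence version is particularly transparent: the cyclic chain $\delta_1\ge\delta_n\ge\cdots\ge\delta_2\ge\delta_1$ forces all $\delta_i$ equal, and the telescoping sum $\sum_i\delta_i=0$ pins that common value at zero. The maximum-propagation variant is equally valid and perhaps even more direct, since it avoids introducing the $\delta_i$ altogether. Either would be a welcome addition if the paper wanted to be self-contained on this point; the only thing to keep in mind is that the analogous statement for paths (Proposition~\ref{only-two-path}) is likewise cited rather than proved, and your maximum argument adapts there too (an interior maximum forces equality with both neighbors; at the endpoints the single-neighbor equation $r_1d_1=r_2$ with $d_1\ge 1$ already gives $r_2\ge r_1$, so the maximum can be taken interior unless $\rr$ is constant).
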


\begin{prop}[{\cite[pp.~484--485]{Lorenzini89}}; {\cite[Thm.~5.1]{arithmetical}}] \label{subdivision}
\begin{enumerate}
\item Let $n\geq 2$ and let $(\dd',\rr')\in\Arith(\C_n)$.  For $1\leq i\leq n$, define integer vectors $\dd$ and $\rr$ of length $n+1$ as in~\eqref{path-subdivision}, with the conventions $d'_0=d'_n$ and $r'_0=r'_n$.
Then $(\dd,\rr)$ is an arithmetical structure on~$\C_{n+1}$.
Moreover, the cokernels of $L(\C_{n+1},\dd)$ and $L(\C_n,\dd')$ are isomorphic.
\item Let $n\geq 3$ and let $(\dd,\rr)\in\Arith(\C_n)$ such that $d_{i-1}>d_i=1<d_{i+1}$ for some $i\in[n]$.  Define integer vectors $\dd',\rr'$ of length~$n-1$ as in~\eqref{path-smoothing}, with the conventions $d_0=d_n$ and $r_0=r_n$.  Then $(\dd',\rr')$ is an arithmetical structure on $\C_{n-1}$, and the cokernels of $L(\C_n,\dd)$ and $L(\C_{n-1},\dd')$ are isomorphic.
\end{enumerate}
\end{prop}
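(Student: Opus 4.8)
The plan is to follow the proof of Proposition~\ref{blowup-path} essentially verbatim, threading the cyclic indexing conventions through it. For part~(1), the key observation is that an edge of $\C_n$ is a clique of size~$2$, so subdividing that edge is exactly the clique-star transformation of \cite[Theorem~5.1]{arithmetical} performed at that edge; invoking that result (with the subdivided edge being the one joining the vertices in positions $i-1$ and $i$, read cyclically, so that $i=1$ refers to the edge joining positions $n$ and~$1$) shows at once that the vectors $\dd,\rr$ defined by~\eqref{path-subdivision} give an arithmetical structure on $\C_{n+1}$. The primitivity of $\rr$ needs no work: every entry of $\rr'$ reappears among the entries of $\rr$, so $\gcd(\rr)$ divides $\gcd(\rr')=1$.

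For part~(2), I would first note that the formulas~\eqref{path-smoothing} invert those of~\eqref{path-subdivision}, so $(\dd,\rr)$ is recovered as the subdivision of $(\dd',\rr')$ at position~$i$. The hypothesis $d_{i-1}>1$ and $d_{i+1}>1$ is exactly what makes $d'_{i-1}=d_{i-1}-1$ and $d'_{i}=d_{i+1}-1$ positive, so $\dd'$ is a positive integer vector (its remaining entries are unchanged entries of $\dd$, and $\rr'$ is positive by construction). For primitivity of $\rr'$: a common divisor of the entries of $\rr'$ divides $r_{i-1}=r'_{i-1}$ and $r_{i+1}=r'_{i}$, hence divides $r_i=r_{i-1}+r_{i+1}$ (the nullspace relation at vertex~$i$, using $d_i=1$), hence divides every entry of the primitive vector $\rr$, so it equals~$1$. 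To see that $(\dd',\rr')$ actually lies in the nullspace of $L(\C_{n-1},\dd')$, substitute $r_i=r_{i-1}+r_{i+1}$ into the $n$ scalar relations $r_{j-1}+r_{j+1}=r_jd_j$ of Proposition~\ref{cycle-characterization}: the relation at the deleted vertex becomes the very identity one substitutes, after which the relations at the former neighbours $i-1$ and $i+1$ turn into the relations at the two endpoints of the newly created edge of $\C_{n-1}$, and the remaining relations survive unchanged up to reindexing. The one case deserving a second look is $n-1=2$, where the two ends of the new edge form a doubled edge in $\C_2$; there the substitution delivers precisely the relation $d'_{i-1}d'_{i}=4$ of Proposition~\ref{c-two}, so nothing breaks. (Alternatively, this half of the statement is recorded in \cite[Theorem~6.5]{arithmetical}.)

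For the cokernel assertion it is enough, since subdivision and smoothing are mutually inverse, to treat subdivision. Following Lorenzini~\cite[\S 1.8]{Lorenzini89}, set $M=L(\C_{n+1},\dd)$ and $M'=L(\C_n,\dd')$, and let $Q$ be the row vector of length $n$ with entries $1$ in the two positions that cyclically correspond to $i-1$ and $i$, and $0$ elsewhere. After reindexing the vertices so that the new degree-$1$ vertex is last, $M$ takes the form
\[
M_Q=\begin{pmatrix}M'+Q^tQ & -Q^t\\ -Q & 1\end{pmatrix},
\]
because subdividing the edge deletes the two $-1$ entries linking its endpoints, adds $1$ to each of their diagonal entries (the contribution of $Q^tQ$), and attaches the new vertex to both endpoints with $-1$ entries. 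Adding $Q^t$ times the last row to the first $n$ rows kills both $-Q^t$ and the extra $Q^tQ$, and then adding $Q_j$ times the last column to column $j$ kills $-Q$; what remains is the block matrix with diagonal blocks $M'$ and $1$. Since all of these are $\Zz$-elementary operations, $\coker M\cong\coker M'$ as abelian groups, and in particular the critical groups of the two arithmetical graphs coincide.

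I do not expect a genuine obstacle here: the substantive inputs --- the clique-star transformation and the $\Zz$-equivalence of blown-up Laplacians --- are already supplied by \cite{Lorenzini89, arithmetical}, and what remains is bookkeeping. The only place warranting care is the well-definedness and primitivity in part~(2) together with the degenerate case $\C_3\to\C_2$; if anything is the ``hard part,'' it is simply making sure the cyclic index conventions in~\eqref{path-subdivision} and~\eqref{path-smoothing} are read consistently at $i=1$, after which the argument is a faithful copy of the proof of Proposition~\ref{blowup-path}.
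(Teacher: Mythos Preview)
Your proposal is correct and follows exactly the approach the paper takes: the paper's proof is the single sentence ``Proposition~\ref{subdivision} can be proved in the same way as Proposition~\ref{blowup-path}, because subdivision and smoothing are local operations that only concern vertices of degree~2,'' and you have carried out precisely that transcription, supplying the bookkeeping (positivity, primitivity, the $Q$-matrix cokernel computation) that the path argument already establishes. The only difference is that here the hypothesis $d_{i-1}>d_i=1<d_{i+1}$ is built into the statement rather than derived from Lemma~\ref{isolated-ones-in-d}, which you correctly exploit.
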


As in Proposition~\ref{blowup-path}, we say that $(\dd,\rr)$ is the \emph{subdivision of $(\dd',\rr')$ at position~$i$} and $(\dd',\rr')$ is the \emph{smoothing} of $(\dd,\rr)$ at position~$i$.
Proposition~\ref{subdivision} can be proved in the same way as Proposition~\ref{blowup-path}, because subdivision and smoothing are local operations that only concern vertices of degree 2.

Recall that $\rr(1)$ denotes the number of occurrences of 1 in an arithmetical $r$-structure $\rr$.
\begin{cor}
We have $\rr(1)>0$ for all arithmetical $r$-structures on $\C_n$.
\end{cor}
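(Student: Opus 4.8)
The plan is to prove the statement by induction on $n$, using the smoothing operation of Proposition~\ref{subdivision} to pass from $\C_n$ to $\C_{n-1}$ while keeping the quantity $\rr(1)$ unchanged. The base case is $n=2$: Proposition~\ref{c-two} lists the three arithmetical structures on $\C_2$ explicitly, and each of the vectors $(1,1)$, $(2,1)$, $(1,2)$ contains an entry equal to $1$, so $\rr(1)>0$.

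For the inductive step, let $n\ge 3$ and $(\dd,\rr)\in\Arith(\C_n)$. If $\dd=\2$, then Proposition~\ref{only-two} forces $\rr=\1$, so $\rr(1)=n>0$. Otherwise $d_i=1$ for some $i\in[n]$. I would then apply Lemma~\ref{isolated-ones-in-d} to the cycle: every vertex of $\C_n$ has two neighbors and $n\ge 3$, so from $d_i=1$ we get $d_{i-1}>1$ and $d_{i+1}>1$. This is exactly the hypothesis $d_{i-1}>d_i=1<d_{i+1}$ of Proposition~\ref{subdivision}(2), so we may smooth $(\dd,\rr)$ at position $i$ to obtain $(\dd',\rr')\in\Arith(\C_{n-1})$. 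From $r_id_i=r_{i-1}+r_{i+1}$ and $d_i=1$ we get $r_i=r_{i-1}+r_{i+1}\ge 2$, and since by~\eqref{path-smoothing} the vector $\rr'$ is obtained from $\rr$ by deleting precisely the entry $r_i$ (and reindexing), we conclude $\rr'(1)=\rr(1)$. The inductive hypothesis gives $\rr'(1)>0$, hence $\rr(1)>0$.

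This argument is essentially routine; the only point that requires attention is verifying that the smoothing operation is legitimately applicable, i.e.\ that the $1$ in $\dd$ we have located is flanked by entries strictly greater than $1$. Since Lemma~\ref{isolated-ones-in-d} is stated for arithmetical structures on arbitrary connected graphs, it applies verbatim to $\C_n$ and supplies exactly this fact, so I do not anticipate any genuine obstacle. An equivalent way to phrase the whole argument is to observe that $\rr(1)$ is invariant under both subdivision and smoothing, and then repeatedly smooth a given structure on $\C_n$ until one reaches either the structure $(\2,\1)$ on some $\C_m$ or a structure on $\C_2$, in both of which cases $\rr(1)>0$ by inspection.
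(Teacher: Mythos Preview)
Your proof is correct and follows essentially the same approach as the paper's: induction on $n$ with base case $n=2$ handled by Proposition~\ref{c-two}, the case $\dd=\2$ handled by Proposition~\ref{only-two}, and the remaining case reduced to $\C_{n-1}$ via Lemma~\ref{isolated-ones-in-d} and the smoothing operation of Proposition~\ref{subdivision}. If anything, you supply slightly more detail than the paper does in justifying $\rr(1)=\rr'(1)$ (by noting explicitly that $r_i=r_{i-1}+r_{i+1}\ge 2$).
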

\begin{proof}
For $n=2$, the assertion follows immediately from Proposition~\ref{c-two}.  For $n\ge3$, the claim is obvious if $\dd=\2$ and $\rr=\1$. If $\dd\neq\2$, then by Proposition \ref{only-two} and Lemma~\ref{isolated-ones-in-d} there exists $i\in [n]$ such that $d_{i-1}>d_i=1<d_{i+1}$. But then $(\dd,\rr)$ is the subdivision of an arithmetical structure $(\dd',\rr')$ on $\C_{n-1}$ as described in Proposition~\ref{subdivision}. Since $\rr(1)=\rr'(1)$, the result follows by induction.
\end{proof}

\begin{theorem}\label{critical-group}
Let $(\dd,\rr)\in\Arith(\C_n)$ be an arithmetical structure of the cycle.  Then
\begin{equation} \label{r-and-d}
\rr(1)=3n-\sum_{j=1}^n d_j
\end{equation}
and
\begin{equation} \label{critical-cyclic}
K(\C_n,\dd,\rr)=\mathbb{Z}_{\rr(1)}.
\end{equation}
\end{theorem}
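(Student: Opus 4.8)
The plan is to proceed by induction on $n$, mirroring the proof of Theorem~\ref{critical-group-path}, using the subdivision/smoothing machinery of Proposition~\ref{subdivision} as the inductive step and the classification in Proposition~\ref{only-two} to isolate the base-case-like situation $\dd = \2$. For the base case $n=2$, equations~\eqref{r-and-d} and~\eqref{critical-cyclic} can be checked directly against the three structures listed in Proposition~\ref{c-two}: in each case $\rr(1) = 6 - (d_1+d_2)$ holds, and the critical group (torsion of $\coker L$) is computed from the $2\times 2$ matrix $L(\C_2,\dd)$ by hand, giving $\Z_{\rr(1)}$ each time. (For instance, $\dd = (2,2)$ gives $L = \begin{bmatrix} 2 & -2 \\ -2 & 2\end{bmatrix}$, whose cokernel is $\Z \oplus \Z$, so trivial torsion $= \Z_1$.)

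For the inductive step with $n \ge 3$, I would split on whether $\dd = \2$. If $\dd = \2$ and $\rr = \1$, then $\rr(1) = n = 3n - 2n$ verifying~\eqref{r-and-d}, and $L(\C_n,\2)$ is the ordinary Laplacian of $\C_n$, whose critical group is the standard one; by the Matrix-Tree Theorem $\C_n$ has exactly $n$ spanning trees, and it is classical that $K(\C_n) \cong \Z_n$, so~\eqref{critical-cyclic} holds. If $\dd \ne \2$, then by Proposition~\ref{only-two} there is some index with $d_i = 1$, and by Lemma~\ref{isolated-ones-in-d} (applied with the cyclic adjacency structure) no two consecutive entries of $\dd$ equal $1$, so we may pick $i$ with $d_{i-1} > d_i = 1 < d_{i+1}$. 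Then Proposition~\ref{subdivision}(2) produces $(\dd',\rr') \in \Arith(\C_{n-1})$ of which $(\dd,\rr)$ is the subdivision at position $i$, with $K(\C_n,\dd,\rr) \cong K(\C_{n-1},\dd',\rr')$. The inductive hypothesis gives $K(\C_{n-1},\dd',\rr') \cong \Z_{\rr'(1)}$, and since $r_i = r_{i-1} + r_{i+1} \ge 2$ (so the new entry is not a $1$) while subdivision preserves all other $r$-values, we get $\rr(1) = \rr'(1)$, hence $K(\C_n,\dd,\rr) \cong \Z_{\rr(1)}$. The equality~\eqref{r-and-d} then follows from the bookkeeping on $\dd$ exactly as in Theorem~\ref{critical-group-path}: subdividing at position $i$ with the formulas in~\eqref{path-subdivision} replaces $d'_{i-1}, d'_i$ by $d'_{i-1}+1, 1, d'_i+1$, so $\sum_j d_j = \sum_j d'_j + 2$, while $3n - \sum d_j = 3(n-1) - \sum d'_j$ matches the inductive hypothesis $\rr'(1) = 3(n-1) - \sum_j d'_j = \rr(1)$.

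The main obstacle I anticipate is verifying that the cyclic-group structure is genuinely preserved — i.e., that Proposition~\ref{subdivision}'s cokernel isomorphism is truly an isomorphism of the torsion subgroups including the cyclic structure, not merely of orders — but this is exactly the content of the $\Z$-equivalence argument in the proof of Proposition~\ref{blowup-path} (the reduction $M_Q \sim \begin{pmatrix} M' & 0 \\ 0 & 1 \end{pmatrix}$ works identically here), so no new difficulty arises. A secondary point to handle carefully is the classical fact $K(\C_n) \cong \Z_n$ in the $\dd = \2$ case; this is standard (e.g., it follows from Smith normal form of the cycle Laplacian), and I would cite it or include the one-line Smith normal form computation. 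One should also double-check that the hypotheses of Lemma~\ref{isolated-ones-in-d} are applicable to $\C_n$: the lemma is stated for arbitrary connected graphs with $n \ge 3$, so it applies directly, confirming that an index $i$ with $d_{i-1} > d_i = 1 < d_{i+1}$ exists whenever $\dd \ne \2$.
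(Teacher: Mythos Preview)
Your approach is essentially identical to the paper's: induction on $n$, base case $n=2$ checked by hand, the Laplacian case $\dd=\2$ handled via the standard critical group $K(\C_n)\cong\Z_n$, and the non-Laplacian case reduced via smoothing (Propositions~\ref{only-two} and~\ref{subdivision}, with Lemma~\ref{isolated-ones-in-d} guaranteeing an isolated $1$ in $\dd$). One arithmetic slip to fix: in your parenthetical base-case check, the cokernel of $\begin{bmatrix}2&-2\\-2&2\end{bmatrix}$ is $\Z\oplus\Z_2$ (Smith normal form $\diag(2,0)$), not $\Z\oplus\Z$, so the torsion is $\Z_2$, matching $\rr(1)=2$ for $\rr=(1,1)$ as the theorem requires.
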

\begin{proof}
We induct on $n$.  First, in the base case $n=2$, both claims can be checked by direct computation for the three arithmetical structures listed in Proposition~\ref{c-two}.
% Macaulay:
% prune coker matrix{{2,-2},{-2,2}}
% prune coker matrix{{1,-2},{-2,4}}
% prune coker matrix{{4,-2},{-2,1}}

Second, if $\dd=\2$ and $\rr=\1$, then $3n-\sum_{i=1}^n d_i=n=\rr(1)$.  In this case $L$ is the Laplacian, and $K(\C_n,\2,\1)=\mathbb{Z}_n$, the standard critical group of $\C_n$.

Third, suppose that $n\geq3$ and $\dd\neq\2$.  Then by Proposition~\ref{subdivision},
$(\dd,\rr)$ can be obtained from subdividing some $(\dd',\rr')\in\Arith(\C_{n-1})$ at position~$i$, and $K(\C_n,\dd,\rr)=K(\C_{n-1},\dd',\rr')$.  The recursive description of $\rr$ in \eqref{path-smoothing} implies that $\rr(1)=\rr'(1)$, establishing the isomorphism \eqref{critical-cyclic}.  Moreover,
\begin{align*}
\rr(1) &= \rr'(1) = 3(n-1) - \sum_{j=1}^{n-1} d'_j && \text{(by induction)}\\
&= 3(n-1) - \left(\sum_{j=1}^n d_j - (2+d_i)\right)\\
&= 3n - \sum_{j=1}^n d_j && \text{(because $d_i=1$).}
\end{align*}
\end{proof}

We now come to the main theorem of this section, enumerating arithmetical $r$-structures on $\C_n$ by the value of $\rr(1)$.  We first need to set up notation for multisets.  A \emph{multiset} is a list $S=[a_1,\dots,a_\ell]$, where order does not matter, and repeats are allowed.  The number $\ell$ is the \emph{size} or \emph{cardinality} of $S$.  We will use square brackets to distinguish multisets from ordinary sets.  
For a set $T$, if $a_i\in T$ for all $i$ then we say that $S$ is a \emph{multisubset} of $T$.
We write $\Multiset_{\ell}(T)$ to denote the set of multisubsets of $T$ of size $\ell$
and let $\multiset{n}{\ell} = |\Multiset_{\ell}([n])| = \binom{n+\ell-1}{\ell}$.
Likewise, $\Multiset_{\leq\ell}(T)$ denotes the set of multisubsets of $T$ of size at most $\ell$.
If $T=[n]$, we abbreviate $\Multiset_{\ell}([n])$ as $\Multiset_{\ell}(n)$ and $\Multiset_{\leq\ell}([n])$ as $\Multiset_{\leq\ell}(n)$. There is a bijection $\Multiset_{n-1}(n+1)\to\Multiset_{\leq n-1}(n)$ that erases all instances of $n+1$, which implies that $\sum_{\ell=0}^{n-1} \multiset{n}{\ell} = \multiset{n+1}{n-1}$.

\begin{theorem} \label{main-cycle-theorem}
Let $1\leq k\leq n$ and $\ell=n-k$.  Then
\[\#\{(\dd,\rr)\in\Arith(\C_n) \st \rr(1)=k\} = \multiset{n}{n-k}=\binom{2n-k-1}{n-k}.\]
In particular, the total number of arithmetical structures on $\C_n$ is
\[\sum_{k=1}^n \multiset{n}{n-k} = \sum_{\ell=0}^{n-1} \multiset{n}{\ell}
 = \multiset{n+1}{n-1} = \binom{2n-1}{n-1}.\]
\end{theorem}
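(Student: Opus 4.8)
The plan is to prove the refined count $\#\{(\dd,\rr)\in\Arith(\C_n)\st\rr(1)=k\}=\multiset{n}{n-k}=\binom{2n-k-1}{n-k}$ for each $1\le k\le n$; the ``in particular'' statement then follows at once, since $\sum_{k=1}^{n}\multiset{n}{n-k}=\sum_{\ell=0}^{n-1}\multiset{n}{\ell}=\multiset{n+1}{n-1}=\binom{2n-1}{n-1}$ by the multiset bijection recorded just before the theorem statement. Since an arithmetical structure is determined by its $\rr$-vector, it suffices to count arithmetical $r$-structures.

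The first step is an arc decomposition. Let $\rr$ be an arithmetical $r$-structure on $\C_n$ with $\rr(1)=k$, and let $S=\{i\in\Zz_n\st r_i=1\}$, a $k$-subset of $\Zz_n$; reading cyclically, the elements of $S$ cut $\C_n$ into $k$ arcs with $\ell_1,\dots,\ell_k\ge1$ edges, $\ell_1+\dots+\ell_k=n$. The restriction of $\rr$ to the $j$-th arc is a positive integer tuple of length $\ell_j+1$ whose first and last entries equal $1$ and whose interior entries are $\ge2$; by Corollary~\ref{path-characterization} together with Proposition~\ref{cycle-characterization} (the cycle divisibility condition at an interior vertex of an arc is exactly the path divisibility condition there), this restriction is an arithmetical $r$-structure on $\P_{\ell_j+1}$ with $\rr(1)=2$. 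Conversely, given a $k$-subset $S\subseteq\Zz_n$ and an arithmetical $r$-structure with $\rr(1)=2$ on each of the $k$ arc-paths, the glued vector is primitive (it contains a $1$), satisfies the divisibility condition of Proposition~\ref{cycle-characterization} at every vertex (automatically at the vertices of $S$, and by the path condition at the interior vertices), hence is an arithmetical $r$-structure on $\C_n$ with support-of-ones exactly $S$; its $\dd$-vector is then forced. (This is the same picture as building the structure from the all-$2$'s structure on $\C_k$ by $n-k$ edge subdivisions, via Propositions~\ref{only-two} and~\ref{subdivision}. When $k=1$ an arc wraps all the way around $\C_n$ with both endpoints equal to the single vertex of $S$, and the case $n=2$ requires minor care with the two parallel edges, or can simply be read off from Proposition~\ref{c-two}.) Since by Theorem~\ref{path-count} there are exactly $C_{\ell-1}$ arithmetical $r$-structures on $\P_{\ell+1}$ with $\rr(1)=2$, this bijection yields
\[
\#\{(\dd,\rr)\in\Arith(\C_n)\st\rr(1)=k\}\;=\;\sum_{\substack{S\subseteq\Zz_n\\ |S|=k}}\ \prod_{j=1}^{k}C_{\ell_j(S)-1}\;=:\;A(n,k).
\]

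The second step evaluates $A(n,k)$. Because the summand $\prod_j C_{\ell_j-1}$ depends only on the cyclic multiset of gap lengths of $S$, marking each $S$ at one of its $k$ elements and recording the resulting ordered composition $(\ell_1,\dots,\ell_k)$ of $n$ into positive parts, together with the base point in $\Zz_n$, establishes $k\,A(n,k)=n\,G(n,k)$, where $G(n,k):=\sum_{\ell_1+\dots+\ell_k=n,\,\ell_i\ge1}\prod_j C_{\ell_j-1}=[x^n]F(x)^k$ and $F(x):=\sum_{m\ge1}C_{m-1}x^m=\tfrac12\bigl(1-\sqrt{1-4x}\bigr)$. Since $F$ satisfies $x=F-F^2=F(1-F)$, Lagrange inversion gives $[x^n]F(x)^k=\tfrac{k}{n}\,[u^{\,n-k}](1-u)^{-n}=\tfrac{k}{n}\binom{2n-k-1}{n-k}$, so $G(n,k)=\tfrac{k}{n}\binom{2n-k-1}{n-k}$ and therefore $A(n,k)=\tfrac{n}{k}\,G(n,k)=\binom{2n-k-1}{n-k}=\multiset{n}{n-k}$, as claimed.

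I expect the main obstacle to be the bookkeeping in the arc-decomposition bijection rather than the generating-function computation: one must check carefully that gluing arbitrary $\rr(1)=2$ path-structures along a prescribed $k$-subset of $\Zz_n$ always produces a genuine arithmetical structure on $\C_n$ whose support of ones is exactly that subset, and that the degenerate wrap-around arc for small $k$ and the multigraph $\C_2$ cause no trouble. An alternative to the Lagrange-inversion step, which matches the phrasing ``an explicit bijection between multisets and arithmetical structures, equivariant with respect to a natural $\Zz_n$-action'' from the introduction, is to encode each arc of length $\ell$ by a size-$(\ell-1)$ multiset drawn from the vertices it spans (using a standard encoding of its Catalan datum) and assemble a single multisubset of $[n]=\Zz_n$ of size $\sum_j(\ell_j-1)=n-k$; verifying that this assignment is a bijection and intertwines rotation of $\C_n$ with the cyclic shift on $[n]$ would establish Theorem~\ref{main-cycle-theorem} purely combinatorially. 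Either way, the refined count implies the total.
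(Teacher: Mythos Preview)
Your proof is correct and takes a genuinely different route from both of the paper's two proofs.  The paper's first proof builds an explicit $\Zz_n$-equivariant bijection $\Omega\colon\Multiset_{\le n-1}(n)\to\Arith(\C_n)$ via an insertion algorithm (and its inverse).  The paper's second proof, like yours, cuts the cycle at positions where $r_i=1$, but only into \emph{two} pieces: the arc from the first $1$ (at position $\alpha$) to the last $1$ (at position $\beta$), which is a path structure with $\rr(1)=k$, and the complementary arc, with $\rr(1)=2$; this requires the refined ballot-number count of Theorem~\ref{main-path-theorem} as input and is matched against a last-touch decomposition of lattice paths to $(n-1,n-k)$.  Your argument instead cuts at \emph{all} $k$ occurrences of $1$, so each arc needs only the basic Catalan count of Theorem~\ref{path-count}; the resulting sum over $k$-subsets of $\Zz_n$ is then evaluated by the Cycle Lemma symmetry $k\,A(n,k)=n\,G(n,k)$ together with Lagrange inversion for $F=x/(1-F)$.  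Your approach is arguably the most elementary of the three, since it bypasses Theorem~\ref{main-path-theorem} entirely and replaces the bespoke bijection with a standard generating-function identity; on the other hand, the paper's first proof yields the explicit equivariant bijection with multisets, and its second proof makes the connection to ballot numbers and lattice paths transparent.  The alternative bijective route you sketch at the end---encoding each arc's Catalan datum as a multiset on its own vertices and assembling a size-$(n-k)$ multisubset of $[n]$---is in the spirit of the paper's first proof, though the paper's actual bijection $\Omega$ is organized rather differently, via a sequential insertion algorithm on a reverse-lex-minimal representative rather than by arc-wise assembly.
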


Combining Theorems~\ref{critical-group} and~\ref{main-cycle-theorem} immediately yields the following result.
\begin{cor}
For $n\geq 2$ and $2n\leq k\leq 3n-1$, we have
\[\#\left\{(\dd,\rr)\in\Arith(\C_n) \st k = \sum_{j=1}^n d_j\right\} = \multiset{n}{k-2n}=\binom{k-n-1}{k-2n}.\]
\end{cor}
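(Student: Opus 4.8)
The plan is to deduce this statement directly from Theorems~\ref{critical-group} and~\ref{main-cycle-theorem} by a change of variables. Theorem~\ref{critical-group} converts the constraint on $\sum_j d_j$ into a constraint on $\rr(1)$, while Theorem~\ref{main-cycle-theorem} already enumerates arithmetical structures stratified by the value of $\rr(1)$; the corollary is obtained by composing these two facts.

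First I would invoke equation~\eqref{r-and-d} of Theorem~\ref{critical-group}, which asserts that every $(\dd,\rr)\in\Arith(\C_n)$ satisfies $\rr(1)=3n-\sum_{j=1}^n d_j$. Hence the condition $\sum_{j=1}^n d_j = k$ is \emph{equivalent} to the condition $\rr(1) = 3n - k$, and the two sets of arithmetical structures are literally equal. Writing $k' := 3n - k$, the quantity the corollary asks for is therefore exactly $\#\{(\dd,\rr)\in\Arith(\C_n) \st \rr(1)=k'\}$. Next I would verify that the hypotheses line up: Theorem~\ref{main-cycle-theorem} applies precisely when $1 \le k' \le n$, and substituting $k'=3n-k$ turns $1 \le 3n-k \le n$ into $2n \le k \le 3n-1$, which is exactly the range in the corollary. (This is consistent with $\rr(1)$ always lying in $[1,n]$, so that no structures exist outside this range.)

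Applying Theorem~\ref{main-cycle-theorem} with $k$ replaced by $k'$ then gives
\[
\#\{(\dd,\rr)\in\Arith(\C_n) \st \rr(1)=k'\} = \multiset{n}{n-k'}.
\]
Since $n - k' = n - (3n-k) = k - 2n$, this count equals $\multiset{n}{k-2n}$, matching the first expression in the corollary. The only remaining bookkeeping is the binomial form: Theorem~\ref{main-cycle-theorem} also records $\multiset{n}{n-k'}=\binom{2n-k'-1}{n-k'}$, and with $k'=3n-k$ the top entry becomes $2n-(3n-k)-1 = k-n-1$ and the bottom entry becomes $k-2n$, yielding $\multiset{n}{k-2n}=\binom{k-n-1}{k-2n}$, as claimed. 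I do not expect any genuine obstacle: the entire argument is a substitution, and the only things to watch are the direction of the index arithmetic and the range check, both of which are routine.
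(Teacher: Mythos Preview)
Your proposal is correct and matches the paper's approach exactly: the paper simply states that the corollary follows immediately by combining Theorems~\ref{critical-group} and~\ref{main-cycle-theorem}, which is precisely the substitution $k'=3n-k$ you carry out.
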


As we did with Theorem \ref{main-path-theorem}, we give two separate combinatorial proofs of Theorem~\ref{main-cycle-theorem}.  The first is bijective and the second involves recurrences for lattice paths.  Before giving the first proof, we describe actions $\rho$ and $\phi$ of the cyclic group $\Z_n$ (written additively) on the sets $\Arith(\C_n)$ and $\Multiset_{\ell}(n)$.  Specifically, $c\in \Z_n$ acts on $\Arith(\C_n)$ by rotating positions:
\[\rho_c(r_1,\dots,r_n) = (r_{c+1},\dots,r_n,r_1,\dots,r_c),\]
and on multisets by rotating values:
\[ \phi_c([a_1,\dots,a_\ell]) = [a_1+c,\dots,a_\ell+c], \]
with all elements taken modulo~$n$.  For example, the $\rho$-orbit of the arithmetical $r$-structure $(3,1,2,1,2)$ on $\C_5$ is
\[\big\{(3,1,2,1,2), \ \  (1,2,1,2,3), \ \  (2,1,2,3,1), \ \  (1,2,3,1,2), \ \  (2,3,1,2,1)\big\} \]
and the $\phi$-orbit of $[1,3,3,4]$ in $\Multiset_4(5)$ is
\[\big\{[1,3,3,4], \ \ [2,4,4,5], \ \ [3,5,5,1], \ \ [4,1,1,2], \ \ [5,2,2,3]\big\}.\]

\begin{proof}[First proof of Theorem~\ref{main-cycle-theorem}]
%(the ``United Airlines Bijection'')]
We will construct an explicit bijection
\[\Omega:\Multiset_{\leq n-1}(n)\to\Arith(\C_n),\]
which for each multiset constructs an arithmetical structure on the cycle.  The method of doing so is analogous to the first proof of Theorem \ref{main-path-theorem}, in which we constructed an arithmetical structure on the path for each ballot sequence. We note that our bijection $\Omega$ will have the following properties:
\begin{enumerate}
\item $\Omega(S)=(\dd,\rr)$ with $\rr(1)=n-|S|$ for all $S$.
\item $\Omega$ is equivariant with respect to the actions of $\Z_n$ just described, i.e., $\Omega(\phi_t(S))=\rho_t(\Omega(S))$.
\item Given a nonempty multiset $S$, let $\tilde{S} = \phi_c(S)$ be the element of the $\phi$-orbit of $S$ that is first in reverse-lex\footnote{If $A=[a_1\leq\cdots\leq a_k]$ and $B=[b_1\leq\cdots\leq b_k]$ are $k$-multisets, then $A$ precedes $B$ in reverse-lex order if and only if $a_j<b_j$, where $j=\max\{i\st a_i\neq b_i\}$.} order.  Then $\Omega(\tilde{S}) = \tilde{\mathbf{r}}$ is first in reverse-lex order in its $\rho$-orbit.  In particular, $\tilde{r}_n = 1$.
\end{enumerate}

We begin by setting $\Omega(\emptyset) = (\2,\1)$. Given a nonempty multiset $S$, let $\tilde{S}$ be defined as in property (C) above.  Write $\tilde{S}$ as $[s_1,\ldots,s_\ell]$ where each $s_i \le s_{i+1}$.  Because $|S|=\ell<n$, some element of its $\phi$-orbit, hence $\tilde S$, contains no instance of $n$. This implies $s_{\ell} < n$.  Also, $s_1 = 1$, since otherwise subtracting $1$ from each element produces an element of the $\phi$-orbit earlier in reverse-lex order.

We now define an arithmetical $r$-structure $\tilde{\rr}=(\tilde{r}_1,\dots,\tilde{r}_{n})$ (which we regard as labels on the vertices of $\C_n$) by the following algorithm, with the steps numbered for later reference. Much as Proposition \ref{subdivision-prop} constructed arithmetical structures on paths by describing a series of subdivisions on the normal Laplacian arithmetical structure on a shorter path, this algorithm will start with the Laplacian arithmetical structure on $\C_1$ and make a sequence of subdivisions based on the given multiset.

{\bf Algorithm A}
\begin{enumerate}
\item[Input:] A multiset $\tilde{S}=\phi_c(S)=[s_1,\ldots,s_\ell]$ as above.\\
\item[(1)] Initialize $\tilde{r}_0=1$ and $n_0=1$ on $\C_1$ (by convention, we set $\tilde{r}_0=\tilde{r}_n$).
\item[(2)] For each integer $i$ with $1 \le i \le \ell$, we construct an $r$-structure on the cycle graph $\C_{n_i}$ ($1=n_0< n_1 \leq n_2 \leq n_3 \leq \dots \leq n_\ell \le n$) as follows:
\begin{enumerate}
\item If $n_{i-1}<s_i$ add vertices with a label of $\tilde{r}_j=1$ until there are $s_i$ vertices.  Then add a vertex with label $\tilde{r}_{s_i}=2$, and set $n_i=s_i+1$
\item If $n_{i-1}=s_i$, add a vertex with label $\tilde{r}_{s_i}=\tilde{r}_{s_i-1}+1$, and set $n_i=s_i+1$
\item If $n_{i-1} > s_i$, then insert a vertex with label $\tilde{r}_{s_i}+\tilde{r}_{s_i-1}$ into position $s_i$, which will have the effect of moving all later labels forward one vertex. Set $n_i=n_{i-1}+1$
\end{enumerate}
\item[(3)] If $n_{\ell}<n$, add $n-n_\ell$ vertices with a label of $1$.
\item[(4)] The resulting arithmetical $r$-structure is $(\tilde{r}_1,\dots, \tilde{r}_n)=\tilde{\rr}=\Omega(\tilde{S})$, recalling that $\tilde{r}_0=\tilde{r}_n$.  Set $\rr=\Omega(S) = \rho_{-c}(\tilde{\rr})$.
\end{enumerate}
We emphasize that exactly one of steps (2a), (2b) and (2c) is executed for each~$i$. Moreover, we claim that after each iteration of step~(2) the labeled vertices form an arithmetical $r$-structure on a smaller cycle $\C_{n_i}$, and in particular the output $\rr$ produced by this algorithm is indeed an arithmetical $r$-structure on $\C_n$ for the following reasons:
\begin{enumerate}
\item[i)] Because we know that $s_1=1$, after the first iteration of step~(2) we have the vector $(1,2)$ which is an arithmetical $r$-structure on $\C_2$ by Proposition \ref{c-two}.
\item[ii)] After each iteration of the procedure in step~(2a), an arithmetical $r$-structure on $\C_{n_i}$ with $\tilde{r}_0=1$ now also ends with a sequence of $1$'s followed by a $2$.  The divisibilities of Proposition \ref{cycle-characterization} are thus preserved.
\item[iii)] The procedures in steps~(2b) and (2c) amount to inserting a vertex that is labeled with the sum of the labels of its neighbors, which is essentially the subdivision operation discussed for paths.  In particular, the divisibilities of Proposition \ref{cycle-characterization} still hold.
\item[iv)] Step~(3)  will simply add a string of vertices labeled with a $1$ where there was only one such vertex before.  Therefore, this also preserves the fact that the output is an arithmetical $r$-structure.
\item[v)] Finally, applying a cyclic rotation as in step~(4) does not break this property.
\end{enumerate}

Before analyzing this algorithm further, we give two examples, with $n=6$.  Index the vertices of $\C_6$ according to the following figure:
\begin{center}
\begin{tikzpicture}
 [scale=.4,auto=left,minimum size=.7cm,every node/.style={circle,fill=blue!20}]
  \node (a) at (-1.5,2) {$v_1$};
  \node (b) at (1.5,2) {$v_2$};
  \node (c) at (3,0) {$v_3$};
  \node (d) at (1.5,-2) {$v_4$};
  \node (e) at (-1.5,-2) {$v_5$};
  \node (f) at (-3,0) {$v_0$};
\foreach \from/\to in {a/b,b/c,c/d,d/e,e/f,f/a}
    \draw (\from) -- (\to);
\end{tikzpicture}
\end{center}
For $\tilde{S} = [1,1,3,5]$, Algorithm~A proceeds as in Figure~\ref{AlgmAEx:1}.  Each figure shows one iteration of the procedure in step~2.

\begin{figure}[h]
\begin{tikzpicture}
 [scale=.35,auto=left,minimum size=.7cm,every node/.style={circle,fill=blue!20}]
  \coordinate (a) at (-1.5,2) {};
  \coordinate (b) at (1.5,2) {};
  \coordinate (c) at (3,0) {};
  \coordinate (d) at (1.5,-2) {};
  \coordinate (e) at (-1.5,-2) {};
  \node (f) at (-3,0) {1};
\foreach \from/\to in {a/b,b/c,c/d,d/e,e/f,f/a}
    \draw (\from) -- (\to);
  \node [draw=none,fill=none] at (0,-4) {(i)};
\end{tikzpicture}
\hfill
\begin{tikzpicture}
 [scale=.35,auto=left,minimum size=.7cm,every node/.style={circle,fill=blue!20}]
  \node (a) at (-1.5,2) {2};
  \coordinate (b) at (1.5,2) {};
  \coordinate (c) at (3,0) {};
  \coordinate (d) at (1.5,-2) {};
  \coordinate (e) at (-1.5,-2) {};
  \node (f) at (-3,0) {1};
\foreach \from/\to in {a/b,b/c,c/d,d/e,e/f,f/a}
    \draw (\from) -- (\to);
  \node [draw=none,fill=none] at (0,-4) {(ii)};
\end{tikzpicture}
\hfill
\begin{tikzpicture}
  [scale=.35,auto=left,minimum size=.7cm,every node/.style={circle,fill=blue!20}]
  \node (a) at (-1.5,2) {3};
  \node (b) at (1.5,2) {2};
  \coordinate (c) at (3,0) {};
  \coordinate (d) at (1.5,-2) {};
  \coordinate (e) at (-1.5,-2) {};
  \node (f) at (-3,0) {1};
\foreach \from/\to in {a/b,b/c,c/d,d/e,e/f,f/a}
    \draw (\from) -- (\to);
  \node [draw=none,fill=none] at (0,-4) {(iii)};
\end{tikzpicture}
\hfill
\begin{tikzpicture}
  [scale=.35,auto=left,minimum size=.7cm,every node/.style={circle,fill=blue!20}]
  \node (a) at (-1.5,2) {3};
  \node (b) at (1.5,2) {2};
  \node (c) at (3,0) {3};
  \coordinate (d) at (1.5,-2) {};
  \coordinate (e) at (-1.5,-2) {};
  \node (f) at (-3,0) {1};
\foreach \from/\to in {a/b,b/c,c/d,d/e,e/f,f/a}
    \draw (\from) -- (\to);
  \node [draw=none,fill=none] at (0,-4) {(iv)};
\end{tikzpicture}
\hfill
\begin{tikzpicture}
  [scale=.35,auto=left,minimum size=.7cm,every node/.style={circle,fill=blue!20}]
  \node (a) at (-1.5,2) {3};
  \node (b) at (1.5,2) {2};
  \node (c) at (3,0) {3};
  \node (d) at (1.5,-2) {1};
  \node (e) at (-1.5,-2) {2};
  \node (f) at (-3,0) {1};
\foreach \from/\to in {a/b,b/c,c/d,d/e,e/f,f/a}
    \draw (\from) -- (\to);
  \node [draw=none,fill=none] at (0,-4) {(v)};
\end{tikzpicture}
\caption{Algorithm A, with $n=6$ and $\tilde{S}=[1,1,3,5]$.\label{AlgmAEx:1}}
\end{figure}
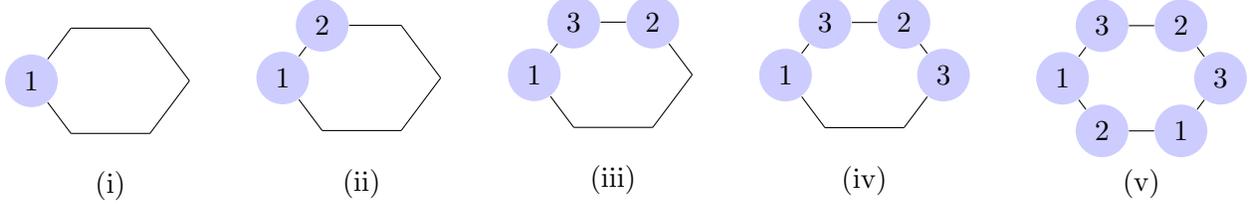

\begin{enumerate}[label=(\roman*)]
\item The leftmost figure is the result of the initialization (step~1).
\item When $i=1$, we have $s_i=1=n_{i-1}$. Step~(2b) inserts a vertex with label $1+1=2$ in position 1 and sets $n_1=2$.  Note that we always have $s_1=1=n_0$, so this will always be the output of the first iteration.
\item  When $i=2$ we have $s_i=1<n_{i-1}$. Step~(2c) inserts a vertex with label $1+2=3$ in position 1 and moves the vertex labeled 2 into position 2.  We then set $n_2=3$.
\item  When $i=3$ we have $s_i=3=n_2$. Step~(2b) places a vertex with label $r_{s_2}+1=3$ in position 3, and sets $n_3=4$.
\item  When $i=4$ we have $s_i=5>n_{i-1}$. Step~(2a) places a vertex with a label of $1$ in position 4, and a vertex with label $2$ in position 5. We now set $n_4=6=n$, so step (3) does not occur.
\end{enumerate}

As a second example, if $\tilde{S} = [1,1,4,4]$ then the algorithm proceeds as in Figure~\ref{AlgmAEx:2}.

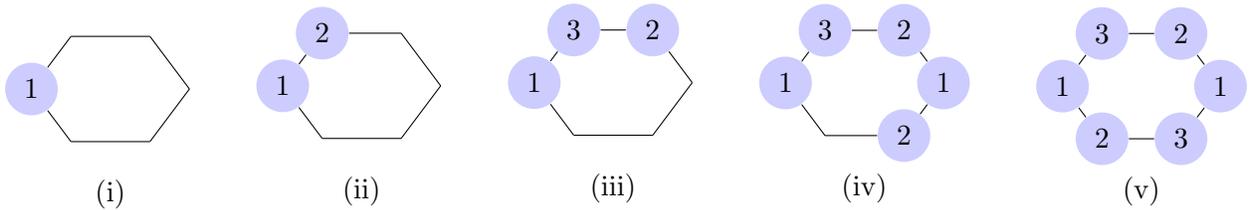
\begin{figure}[h]
\begin{tikzpicture}
 [scale=.35,auto=left,minimum size=.7cm,every node/.style={circle,fill=blue!20}]
  \coordinate (a) at (-1.5,2) {};
  \coordinate (b) at (1.5,2) {};
  \coordinate (c) at (3,0) {};
  \coordinate (d) at (1.5,-2) {};
  \coordinate (e) at (-1.5,-2) {};
  \node (f) at (-3,0) {1};
\foreach \from/\to in {a/b,b/c,c/d,d/e,e/f,f/a}
    \draw (\from) -- (\to);
  \node [draw=none,fill=none] at (0,-4) {(i)};
\end{tikzpicture}
\hfill
 \begin{tikzpicture}
 [scale=.35,auto=left,minimum size=.7cm,every node/.style={circle,fill=blue!20}]
  \node (a) at (-1.5,2) {2};
  \coordinate (b) at (1.5,2) {};
  \coordinate (c) at (3,0) {};
  \coordinate (d) at (1.5,-2) {};
  \coordinate (e) at (-1.5,-2) {};
  \node (f) at (-3,0) {1};
\foreach \from/\to in {a/b,b/c,c/d,d/e,e/f,f/a}
    \draw (\from) -- (\to);
  \node [draw=none,fill=none] at (0,-4) {(ii)};
\end{tikzpicture}
\hfill
\begin{tikzpicture}
  [scale=.35,auto=left,minimum size=.7cm,every node/.style={circle,fill=blue!20}]
  \node (a) at (-1.5,2) {3};
  \node (b) at (1.5,2) {2};
  \coordinate (c) at (3,0) {};
  \coordinate (d) at (1.5,-2) {};
  \coordinate (e) at (-1.5,-2) {};
  \node (f) at (-3,0) {1};
\foreach \from/\to in {a/b,b/c,c/d,d/e,e/f,f/a}
    \draw (\from) -- (\to);
  \node [draw=none,fill=none] at (0,-4) {(iii)};
\end{tikzpicture}
\hfill
\begin{tikzpicture}
  [scale=.35,auto=left,minimum size=.7cm,every node/.style={circle,fill=blue!20}]
  \node (a) at (-1.5,2) {3};
  \node (b) at (1.5,2) {2};
  \node (c) at (3,0) {1};
  \node (d) at (1.5,-2) {2};
  \coordinate (e) at (-1.5,-2) {};
  \node (f) at (-3,0) {1};
\foreach \from/\to in {a/b,b/c,c/d,d/e,e/f,f/a}
    \draw (\from) -- (\to);
  \node [draw=none,fill=none] at (0,-4) {(iv)};
\end{tikzpicture}
\hfill
\begin{tikzpicture}
 [scale=.35,auto=left,minimum size=.7cm,every node/.style={circle,fill=blue!20}]
  \node (a) at (-1.5,2) {3};
  \node (b) at (1.5,2) {2};
  \node (c) at (3,0) {1};
  \node (d) at (1.5,-2) {3};
  \node (e) at (-1.5,-2) {2};
  \node (f) at (-3,0) {1};
\foreach \from/\to in {a/b,b/c,c/d,d/e,e/f,f/a}
    \draw (\from) -- (\to);
  \node [draw=none,fill=none] at (0,-4) {(v)};
\end{tikzpicture}
\caption{Algorithm A, with $n=6$ and $\tilde{S}=[1,1,4,4]$.\label{AlgmAEx:2}}
\end{figure}

We now show that the function $\Omega:\Multiset_{\leq n-1}(n)\to\Arith(\C_n)$ defined by Algorithm~A satisfies the desired properties (A), (B), and (C), and is a bijection. First, each iteration of the procedure in step~(2) adds one new vertex with a label $\tilde{r}_i$ greater than $1$.  Thus, the number of $1$'s in $\Omega(S)$, which is unaffected by the cyclic rotation $\rr = \rho_{-c}(\tilde{\rr})$ of Step (4), equals $\Omega(S)(1)=n-|S|$, and we can regard $\Omega$ as the union of maps
\[
\Omega_\ell:\Multiset_{\ell}(n)\to\{\text{arithmetical } r\text{-structures on } \C_n \text{ with }  \rr(1)=n-\ell\}.
\]
Second, to see that this map is equivariant, we let $T=\phi_t(S)$ for $t \in \Z_n$.  Then one can check that $\tilde{T}=\tilde{S}=\phi_{-t+c}(T)$.  We then get that $\Omega(T) = \rho_{t-c}(\tilde{\rr})= \rho_t(\Omega(S))$, as desired.

Third, by starting with $\tilde{S} = [s_1,s_2,\dots, s_\ell]$, which is the multiset in its $\phi$-orbit that comes first in reverse-lex order, the arithmetical $r$-structure $\tilde{\rr} = \Omega(\tilde{S})$ is constructed so that $\tilde{r}_j = 1$ for $s_\ell< j \leq n$ and $\tilde{r}_{s_\ell} > 1$.  The only way that $\tilde{\rr}$ can contain a longer string of $1$'s earlier is if there exists a choice of $(i,i+1)$ so that the gap $s_{i+1}-s_i-1$ is greater than the gap $n-s_\ell$.  However that would contradict that fact that $\tilde{S}$ is first in reverse-lex order since otherwise adding $n-s_{i+1}+1$ (modulo $n$) to all entries of $\tilde{S}$ produces a new maximal entry $n - s_{i+1}+s_i+1$ which would be smaller than $s_\ell$.

We will now show that each $\Omega_\ell$ is a bijection.  This is clear for $\ell=0$, i.e., when $S=\emptyset$.  When $\ell=1$, we have $S=[a_1]$ and $\tilde S=[1]=\phi_{n-a_1+1}(S)$.  The procedure in step~(2) is executed only once, and step~(3) sets $\tilde{\rr}=(2,1,\dots,1)$, so the output of the algorithm via step~(4) will be $\rho_{-n+a_1-1}(2,1,\dots,1) = (r_1,\ldots,r_n)$ where $r_{n-a_1+2}=2$ and all other $r_i=1$.  Again, we see that $\Omega_1$ is a bijection.

Suppose now that $\ell\geq 2$.  After each iteration of the procedure in step~(2), the label in position $s_i$ is a \emph{local maximum}; that is $\tilde{r}_{s_i-1}<\tilde{r}_{s_i}$, and either $\tilde{r}_{s_i+1}<\tilde{r}_{s_i}$ or there is not a vertex $s_i+1$, in which case $\tilde{r}_0=\tilde{r}_n=1$ would be the next label on a vertex.  We claim in addition that if $m>{s_i}$ then $r_m$ is not a local maximum.  This is clear if step~(2a) or (2b) was executed, for then $s_i=n_i-1$.  On the other hand, if step~(2c) was just executed $b$ times in a row, then each step inserted a label that is greater than the label to its right, and each insertion occurred to the right of all previous insertions (since the sequence $(s_i)$ is in weakly increasing order), which proves the claim.

Therefore, we can recover $\tilde S$ from $\rr$ by the following algorithm.  First, let $\tilde\rr=\rho_{c}(\rr)$ be the element of the $\rho$-orbit of $\rr$ that is first in reverse-lex order (so in particular $\tilde r_n=1$).  Label the vertices of $\C_n$ with $\tilde\rr$, and perform the following steps.

{\bf Algorithm B}
\begin{enumerate}
\item[Input:] An arithmetical $r$-structure $\tilde{\rr}=\rho_c(\rr)=(\tilde{r}_1,\ldots,\tilde{r}_n)$ as above.\\
\item[(1)] Let $\tilde S$ be the empty multiset.
\item[(2)] Let $j$ be the greatest integer $1 \le j \le n$ so that $\tilde{r}_j$ is a local maximum, and add $j$ to the multiset $\tilde S$.
\item[(3)] Delete $\tilde{r}_j$ from $\tilde\rr$.  What remains is an arithmetical $r$-structure on the graph $\C_{n-1}$.
\item[(4)] Repeat the previous two steps until we are left with the arithmetical $r$-structure $\1$ on the graph $\C_{n-\ell}$.  The multiset $\tilde S$ will now contain $\ell = n-\rr(1)$ elements, and will be first in reverse-lex order in its $\phi$-orbit.
\end{enumerate}

Having recovered $\tilde S$, we set $S=\phi_{-c}(\tilde S)$.

Steps~(2) and~(3) of Algorithm B will be executed exactly $\ell$ times, since each iteration removes one entry greater than 1 from $\tilde{\rr}$.   The fact that we have an arithmetical $r$-structure on $\C_{n-1}$ after step~(3) follows from part~(B) of Proposition~\ref{subdivision}.
\end{proof}

We now give a second proof of Theorem~\ref{main-cycle-theorem} using enumeration of lattice paths. This proof relies on our refined count for arithmetical structures on paths given in Theorem~\ref{main-path-theorem}.
Before giving the proof we explain how an arithmetical $r$-structure on a cycle gives rise to arithmetical $r$-structures on paths of vertices contained within that cycle.

\begin{lemma}\label{cycle-lemma}
Let $n \ge 2$.
\begin{enumerate}
\item Suppose that $\rr = (r_1,\ldots, r_n)$ is an arithmetical $r$-structure on $\C_n$ with $r_j = 1$ for some $1 \le j \le n$.  Then $(r_j, r_{j+1},\ldots, r_n, r_1,\ldots, r_j)$ is an arithmetical $r$-structure on $\P_{n+1}$.

\item Suppose that $\rr = (r_1,\ldots, r_n)$ is an arithmetical $r$-structure on $\C_n$ with $r_\alpha = 1$ and $r_\beta = 1$ for some $1 \le \alpha < \beta \le n$.  Then $(r_\alpha, r_{\alpha+1},\ldots, r_\beta)$ is an arithmetical $r$-structure on $\P_{\beta-(\alpha-1)}$ and $(r_\beta,r_{\beta+1},\ldots, r_n, r_1,\ldots, r_\alpha)$ is an arithmetical structure on $\P_{n-(\beta-\alpha)+1}$.
\end{enumerate}
\end{lemma}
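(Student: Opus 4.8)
The plan is to reduce both statements to the path characterization in Corollary~\ref{path-characterization}, using its cyclic counterpart, Proposition~\ref{cycle-characterization}. The geometric picture is that cutting the cycle $\C_n$ open at a vertex labeled $1$ (and duplicating that vertex at the two resulting ends) produces a path, and every local divisibility condition survives the cut precisely because severing the cycle at a vertex whose label is $1$ never destroys an ``interior'' divisibility or the primitivity of the vector.

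\emph{Part (1).} I would write the candidate length-$(n+1)$ vector as $\rr^* = (r^*_1,\ldots,r^*_{n+1})$ with $r^*_k = r_{j+k-1}$, indices read modulo~$n$, so that $r^*_1 = r^*_{n+1} = r_j = 1$. First, $\rr^*$ is primitive because it has an entry equal to $1$, and condition~(a) of Corollary~\ref{path-characterization} holds by construction. For condition~(b), every interior index $2 \le k \le n$ of $\rr^*$ corresponds to a vertex of $\C_n$ different from vertex~$j$, and its two neighbors in the path $\rr^*$ carry exactly the labels of its two cyclic neighbors in $\C_n$ (the cycle is severed only between the two copies of vertex~$j$, which now sit at the ends of the path). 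Hence $r^*_k \mid (r^*_{k-1} + r^*_{k+1})$ is literally an instance of the divisibility guaranteed by Proposition~\ref{cycle-characterization}, so Corollary~\ref{path-characterization} applies. (The case $n = 2$, where the two cyclic neighbors of a vertex coincide, can be checked directly from the list in Proposition~\ref{c-two}.)

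\emph{Part (2).} Rather than repeat the bookkeeping, I would deduce this from Part~(1). Applying Part~(1) at $j = \alpha$ gives an arithmetical $r$-structure $\rr^* = (r_\alpha, r_{\alpha+1},\ldots,r_n,r_1,\ldots,r_\alpha)$ on $\P_{n+1}$, in which the entry at position $p = \beta - \alpha + 1$ equals $r_\beta = 1$. Since $1 \le \alpha < \beta \le n$ we have $2 \le p \le n$, so $p$ is a strictly interior position of $\P_{n+1}$, and the ``moreover'' clause of Lemma~\ref{path-lemma} splits $\rr^*$ at $p$ into an arithmetical $r$-structure $(r^*_1,\ldots,r^*_p) = (r_\alpha,\ldots,r_\beta)$ on $\P_p = \P_{\beta-(\alpha-1)}$ and an arithmetical $r$-structure $(r^*_p,\ldots,r^*_{n+1}) = (r_\beta, r_{\beta+1},\ldots,r_n,r_1,\ldots,r_\alpha)$ on $\P_{(n+1)-(p-1)} = \P_{n-(\beta-\alpha)+1}$, exactly as claimed.

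The only real obstacle is the index bookkeeping in Part~(1): one must confirm that the reindexed vector has length $n+1$, that its two endpoints are the duplicated copy of $r_j$, and that the path-adjacencies at every interior position coincide with the cyclic adjacencies; once that is pinned down, both parts follow immediately from the two characterization results together with Lemma~\ref{path-lemma}. I would also sanity-check the boundary cases $n = 2$ and $\beta = \alpha + 1$ (the latter producing a trivial $\P_2$ factor) against the statement.
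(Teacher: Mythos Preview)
Your proof is correct. For Part~(1) your approach is essentially the same as the paper's: the paper writes down the explicit $\dd$-vector equations for the path (defining $\tilde d_1 = r_2$ and $\tilde d_{n+1} = r_n$ at the ends, keeping the interior equations from the cycle), while you invoke the equivalent divisibility characterizations of Corollary~\ref{path-characterization} and Proposition~\ref{cycle-characterization}; these are two phrasings of the same verification.

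For Part~(2) you take a slightly different route. The paper repeats the direct construction of the $\dd$-vector separately for each of the two sub-paths, whereas you apply Part~(1) once (cutting at $\alpha$) and then invoke the ``moreover'' clause of Lemma~\ref{path-lemma} to split the resulting $\P_{n+1}$-structure at the interior position $p = \beta - \alpha + 1$. Your reduction is a clean shortcut that avoids repeating the bookkeeping; the paper's version is more self-contained but redundant with Lemma~\ref{path-lemma}. Both are equally valid, and neither gains any real generality over the other.
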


\begin{proof}
The proofs are similar to those in Lemma~\ref{path-lemma}. For (A), note that $\rr$ is an arithmetical $r$-structure on $\C_n$ if and only if there exists a positive integral vector $\dd$ such that the following equations hold, where the indices are taken modulo~$n$:
\[
r_id_i=r_{i-1}+r_{i+1} \qquad 1\le i\le n.
\]
Without loss of generality, assume that $r_1=1$. Then we have the following set of equations, showing that $(r_1,r_2,\dots,r_n,r_1)$ is an arithmetical $r$-structure on $\P_{n+1}$:
\begin{align*}
\tilde{d}_1 &:= r_2\\
d_ir_i &= r_{i-1}+r_{i+1} \qquad 1<i\le n\\
\tilde{d}_{n+1} &:= r_n.
\end{align*}
Similarly for (B), if $r_1=r_\beta=1$, then we have the equations:
\begin{align*}
\tilde{d}_1 &:= r_2\\
d_ir_i &= r_{i-1}+r_{i+1} \qquad 1<i<\beta\\
\tilde{d}_{\beta} &:= r_{\beta-1},
\end{align*}
showing that $(r_1,r_2,\dots,r_\beta)$ is an arithmetical $r$-structure on $\P_{\beta}$. An identical argument shows that $(r_\beta,r_{\beta+1},\dots,r_n,r_1)$ is an arithmetical $r$-structure on $\P_{n-\beta+2}$.
\end{proof}

\begin{proof}[Second Proof of Theorem~\ref{main-cycle-theorem}]
First, consider the case $k=1$.  By Lemma \ref{cycle-lemma}, for each $j\in[n]$, the map
\[
(r_1,\dots,r_n) \mapsto (r_j,r_{j+1},\dots,r_n,r_1,\dots,r_j)
\]
is a bijection between arithmetical $r$-structures $(r_1,\dots,r_n)$ on $\C_n$ with a unique 1 in position $j$, and arithmetical $r$-structures on $\P_{n+1}$ with exactly two entries equal to $1$.  By Theorem~\ref{main-path-theorem},
\begin{align*}
\#\{\rr\in\Arith(\C_n) \st \rr(1)=1\} &= n\cdot\#\{\rr\in\Arith(\P_{n+1}) \st \rr(1)=2\}\\
&= n\cdot \Cballot(n+1,2)\\
&= n\cdot C_{n-1} = \binom{2n-2}{n-2} = \multiset{n}{n-1}
\end{align*}
as claimed.

Now suppose that $k > 1$. There are $\binom{2n-k-1}{n-k}$  lattice paths from $(0,0)$ to $(n-1,n-k)$ consisting of north and east steps.  Since $n-1 > n-k$, every such lattice path $P$ touches the line $x=y$ for a last time at some point $(z,z)$, where $0 \le z \le n-k$.  That is, $P$ consists of a lattice path $P_1$ from $(0,0)$ to $(z,z)$, followed by a step east to $(z+1,z)$, followed by a lattice path $P_2$ from $(z+1,z)$ to $(n-1,n-k)$ that does not cross (although it may touch) the diagonal line $x = y+1$.  There are $\binom{2z}{z} = (z+1) C_z$ choices for the subpath $P_1$.  The possibilities for the path $P_2$ are in bijection with lattice paths from $(0,0)$ to $(n-z-2,n-z-k)$ that do not cross above the line $x=y$. This gives $B(n-z-2,n-z-k)=\Cballot(n-z,k)$ possible paths $P$.  Therefore
\begin{equation} \label{lattice-path-count}
\binom{2n-k-1}{n-k} = \sum_{z=0}^{n-k} (z+1) C_z \cdot \Cballot(n-z,k).
\end{equation}
We show that this expression counts the arithmetical $r$-structures on $\C_n$ with $\rr(1) = k$.

It is now convenient to think of the vertices of $\C_n$ as $v_0,\ldots, v_{n-1}$.  Let $\rr=(r_0,\dots,r_{n-1})$ be an arithmetical $r$-structure on $\C_n$ such that $\rr(1)=k$.  Let
\[
\alpha = \min\{i \st r_i=1\}, \qquad \beta=\max\{i\st r_i=1\}.
\]
Note that $0 \le \alpha < \beta \le n-1$.

First, by part (B) of Lemma \ref{cycle-lemma}, $\rr'=(r_\alpha,r_{\alpha+1},\dots,r_{\beta -1},r_\beta)$ is an arithmetical $r$-structure on the path $\P_{\beta-\alpha+1}$ with $\rr'(1)=k$.  In particular, $\beta-\alpha + 1 \ge k$.  By Theorem~\ref{main-path-theorem}, the number of possibilities for $\rr'$ is $\Cballot(\beta-\alpha+1,k)$.

Second, observe that $\rr''=(r_\beta, r_{\beta+1},\dots, r_{n-1}, r_0,r_1, \dots, r_\alpha)$ is an arithmetical $r$-structure on the path $\P_{n-(\beta-\alpha)+1}$ with $\rr''(1)=2$.  Again by Theorem~\ref{main-path-theorem}, the number of possibilities for $\rr''$ is $\Cballot(n-\beta+\alpha+1,2)$, which is equal to the Catalan number $C_{n-(\beta-\alpha)-1}$.

Let $z = n-(\beta-\alpha)-1$ and note that $0\le z \le n-k$.  For fixed $n, k$, each choice of $\alpha$ and $\beta$ satisfying $\beta-\alpha + 1 \ge k$ gives exactly $C_z \cdot \Cballot(n-z,k)$ possible arithmetical $r$-structures.  Moreover, each value of $z$ arises from precisely $z+1$ pairs $(\alpha,\beta)$, namely $(0,n-z-1),(1,n-z),\dots,(z,n-1)$.  Therefore, the number of possible arithmetical structures is
\[
\sum_{z=0}^{n-k} (z+1) C_z \cdot \Cballot(n-z,k)
\]
which, combined with \eqref{lattice-path-count}, completes the proof.
\end{proof}

We do not know as much about the distribution of single digits in the arithmetical structures on the cycle as we do for the path.  Clearly each digit is distributed identically, since $\C_n$ is vertex-transitive, but we do not have a full analogue of Theorem~\ref{ballot-path}.  However, we can observe the following pattern.

\begin{prop}
Let $n\geq 3$ and $1\leq i\leq n$.  Then $(1,r_2,\dots,r_n)$ is an arithmetical $r$-structure on $\C_n$ if and only if $(1,r_2,\dots,r_n,1)$ is an arithmetical $r$-structure on $\P_{n+1}$.  In particular, the number of arithmetical $d$-structures on $\C_n$ with $d_i=1$ is the Catalan number $C_{n-1}$.
\end{prop}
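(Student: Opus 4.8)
The plan is to prove the biconditional directly from the divisibility characterizations already established, and then to read off the count by transporting Theorem~\ref{ballot-path} across the bijection that the biconditional sets up.

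\emph{The biconditional.} By Proposition~\ref{cycle-characterization}, a primitive positive vector $(r_1,\dots,r_n)$ lies in $\Arith(\C_n)$ exactly when $r_i\mid r_{i-1}+r_{i+1}$ for all $i$ read mod $n$; by Corollary~\ref{path-characterization}, a primitive positive vector of length $n+1$ lies in $\Arith(\P_{n+1})$ exactly when its first and last entries equal $1$ and $r_j\mid r_{j-1}+r_{j+1}$ for $2\le j\le n$. So, starting from $\rr=(1,r_2,\dots,r_n)$, put $\rr^{+}=(1,r_2,\dots,r_n,1)$: both vectors contain a $1$ and so are primitive, and the endpoint requirement for $\rr^{+}$ is automatic. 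For $3\le j\le n-1$ the path divisibility at $j$ is literally the cycle divisibility at $j$; the remaining indices pair up, the path condition $r_2\mid 1+r_3$ at $j=2$ being the cycle condition at $2$ (using $r_1=1$), and the path condition $r_n\mid r_{n-1}+1$ at $j=n$ being the cycle condition at $n$ (using that the cyclic index $n+1$ wraps to $1$). The one cycle condition with no path counterpart is $1\mid r_n+r_2$ at index $1$, which is vacuous. Thus both assertions are governed by the same finite list of conditions. (Alternatively, the forward implication is immediate from Lemma~\ref{cycle-lemma}(A) applied at position~$1$.)

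\emph{The count.} The assignment $\rr\mapsto\rr^{+}$ is a bijection from the arithmetical $r$-structures on $\C_n$ of the form $(1,r_2,\dots,r_n)$ onto all of $\Arith(\P_{n+1})$ — it is surjective because, by Lemma~\ref{path-lemma}, every arithmetical $r$-structure on $\P_{n+1}$ already begins and ends with $1$. For each slot $i$ with $2\le i\le n$ this bijection preserves the $i$th entry of the $\dd$-vector: on the cycle $d_i=(r_{i-1}+r_{i+1})/r_i$ with $r_0=r_n$ and $r_{n+1}=r_1=1$, on the path $d_i=(r^{+}_{i-1}+r^{+}_{i+1})/r^{+}_i$, and the two agree for $2\le i\le n$ — only slot~$1$ changes, where the cycle has $d_1=r_n+r_2$ while the path has $d_1=r_2$. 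Hence the structures $(1,r_2,\dots,r_n)$ on $\C_n$ with $d_i=1$ biject with the arithmetical $d$-structures on $\P_{n+1}$ with $d_i=1$, and Theorem~\ref{ballot-path} applied to $\P_{n+1}$ (with $d_i=(n+1)-k-1=1$, hence $k=n-1$) gives $B(n-1,n-1)=C_{n-1}$ of those.

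\emph{Main obstacle.} The biconditional is routine — matching a fixed finite set of divisibilities, with a quick check of the degenerate indices $j=2$, $j=n$ (and the small case $n=3$, where the list has no ``interior'' entries). The step needing care is the $\dd$-vector comparison in the count: one must track that slot~$1$ is genuinely exceptional, since opening the cycle into a path splits the entry $d_1=r_n+r_2$ into the two path-endpoint entries $r_2$ and $r_n$, and one should make sure the prescribed slot $i$ is one of the slots preserved by the bijection (i.e.\ read relative to the normalization $r_1=1$). Getting this translation exactly right, so that the prescribed-entry count lines up with Theorem~\ref{ballot-path}, is where I expect the actual work to lie.
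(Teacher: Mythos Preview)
Your proof of the biconditional is correct and is exactly the argument the paper has in mind: match the divisibility conditions of Corollary~\ref{path-characterization} and Proposition~\ref{cycle-characterization}, noting that the cycle condition at index~$1$ is vacuous since $r_1=1$.

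The enumeration, however, has a genuine gap. What you actually establish is that the map $\rr\mapsto\rr^{+}$ is a bijection from $\{\rr\in\Arith(\C_n):r_1=1\}$ onto $\Arith(\P_{n+1})$ preserving $d_i$ for $2\le i\le n$, and hence that there are $B(n-1,n-1)=C_{n-1}$ structures on $\C_n$ with $r_1=1$ \emph{and} $d_i=1$ (for a fixed $i\ne 1$). The proposition, though, asks for the number of structures with $d_i=1$ and no constraint on $r_1$. Your ``normalization $r_1=1$'' is an extra hypothesis, not a consequence of $d_i=1$: vertex-transitivity lets you move the prescribed slot to position~$2$, say, but it does not let you simultaneously force $r_1=1$. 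Concretely, for $n=3$ and $i=2$ the structure $\rr=(2,3,1)$, $\dd=(2,1,5)$ has $d_2=1$ but $r_1=2$, so it is missed by your bijection.

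In fact this gap cannot be closed, because the ``in particular'' clause as stated is false: for $n=3$ there are $3$ structures with $d_1=1$ (namely $\dd\in\{(1,3,3),(1,5,2),(1,2,5)\}$), not $C_2=2$. More generally, the smoothing bijection of Proposition~\ref{subdivision} shows that the structures on $\C_n$ with $d_i=1$ are in bijection with $\Arith(\C_{n-1})$, giving $\binom{2n-3}{n-2}$ rather than $C_{n-1}$. Your count $C_{n-1}$ is the correct answer to a different question (structures with $r_1=1$ and $d_i=1$), and the paper's one-line appeal to Theorem~\ref{ballot-path} suffers from the same slip.
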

\begin{proof}
The equivalence follows immediately from the characterizations of arithmetical $r$-structures on paths and cycles (respectively Corollary~\ref{path-characterization} and Proposition~\ref{cycle-characterization}), and the enumeration then follows from Theorem~\ref{ballot-path}.
\end{proof}

%%%%%%%%%%%%%%%%%%%%%%%%%%%%%%%%%%%%%%%%%%%%%%%%%%%%%%%%%%%%%%%%%%%%%%
\bibliographystyle{amsalpha}
\bibliography{biblio}
\end{document}